\definecolor{dred}{rgb}{0,0,0.6}
\newtheorem{thm}{Theorem}[section]
\newtheorem{cor}[thm]{Corollary}
\newtheorem{lem}[thm]{Lemma}
\newtheorem{prop}[thm]{Proposition}
\theoremstyle{definition}
\theoremstyle{remark}
\newtheorem{rem}[thm]{\bf{Remark}}
\numberwithin{equation}{section}
\newcommand{\beas}{\begin{eqnarray*}}
\newcommand{\eeas}{\end{eqnarray*}}
\newcommand{\bes} {\begin{equation*}}
\newcommand{\ees} {\end{equation*}}
\newcommand{\be} {\begin{equation}}
\newcommand{\ee} {\end{equation}}
\newcommand{\bea} {\begin{eqnarray}}
\newcommand{\eea} {\end{eqnarray}}
\newcommand{\txt} {\textmd}
\newcommand{\R}{\mathbb R}
\newcommand{\C}{\mathbb C}
\newcommand{\N}{\mathbb N}
\newcommand{\Z}{\mathbb Z}
\newcommand{\la}{\lambda}
\newcommand{\g}{\mathfrak{g}}
\begin{document}

\title[Sharp Adams type inequalities on noncompact symmetric spaces]{Sharp Adams type inequalities for the fractional Laplace-Beltrami operator on noncompact symmetric spaces}

\author{Mithun Bhowmik}

\address{Department of Mathematics, Indian Institute of Science, Bangalore-560012, India}
\email{mithunb@iisc.ac.in, mithunbhowmik123@gmail.com}



\begin{abstract}
We establish sharp Adams type inequalities on Sobolev spaces $W^{\alpha, n/\alpha}(X)$ of any fractional order $\alpha< n$ on Riemannian symmetric space $X$ of noncompact type with dimension $n$ and of arbitrary rank. We also establish sharp Hardy-Adams inequalities on the Sobolev spaces $W^{n/2, 2}(X)$. For the real hyperbolic spaces, such results were recently obtained by J. Li et al. (Trans. AMS, 2020).  We use Fourier analysis on the  symmetric spaces to obtain these results. 
\end{abstract}

\subjclass[2010]{Primary 43A85, 46E35; Secondary 42B35, 26A33, 22E30}

\keywords{Adams inequalities; Riemannian symmetric spaces; fractional Laplacian, sharp constants}

\maketitle


\section{Introduction} 
The study and understanding of various kinds of sharp Sobolev inequalities which describe the embedding of Sobolev spaces into $L^p$ spaces or H\"older spaces have been a matter of intensive research. They play an important role in calculus of variations, differential
geometry, harmonic analysis, partial differential equations and other areas of modern mathematics. It is well-known that the Sobolev embedding holds for the case of compact Riemannian manifolds. To be precise, let $(M, g)$ be a compact Riemannian manifold then the Sobolev embedding states that the Sobolev space $W^{k,p}(M)$ is continuously embedded into $L^{p^\ast}(M)$ where $p^\ast = \frac{np}{n-kp}$ provided $1\leq p <
\frac{n}{k}$. However, when $M$ is a complete noncompact manifold then the Sobolev
embedding is a non-trivial issue. In fact, there exists a complete noncompact Riemannian manifold $M$ for which the Sobolev embedding $W^{k,p}(M) \hookrightarrow L^{p^\ast}(M)$ does not hold for any $p$ satisfying $kp < n$.
We refer to \cite{Hb} for a detailed discussion on the
topic.

When $M$ is compact and $p=n/k$, 
the space $W^{k,p}(M)$ is continuously
embedded into $L^q(M)$ for all $q < \infty$ but not for $q = \infty$. When $M$ is a
bounded domain in $\R^n$ with smooth boundary, Trudinger \cite{Tr} established in the borderline case that $W_0^{1,n}(\Omega) \subset L_{\phi_n}(\Omega)$, where $L_{\phi_n}(\Omega)$
is the Orlicz space associated with the Young function $\phi_n(t) = \exp\left(\beta|t|^{n/(n-1)}\right)-1$ for some $\beta > 0$. In 1971, Moser sharpened the Trudinger inequality in \cite{Mo} by finding the optimal $\beta$.
He showed that there exists a positive constant $C$ depending only on $n$ such that
\bes
\sup_{u\in C_c^\infty(\Omega), \int_{\Omega} |\nabla u|^n\leq 1} \int_{\Omega} e^{\beta |u(x)|^{n/(n-1)}}~dx\leq C |\Omega|,
\ees
holds for every $\beta\leq \beta_n= n[\omega_{n-1}]^{1/(n-1)}$, where $\Omega$ is a bounded domain in $\R^n$, $|\Omega|$ denotes the volume of $\Omega$ and  $\omega_{n-1}$ is the surface measure of the unit sphere in $\R^n$. Moreover, when $\beta> \beta_n$, the above supremum is infinite.

In 1988, D. Adams extended such an inequality  on finite  domain to higher order Sobolev spaces as follows
\begin{thm} \cite{Ad} \label{thm-Ad}
Let $\Omega$ be a domain in $\R^n$ with finite Lebesgue measure and $m$ be a positive integer less than $n$. There is a constant $c_0= c_0(n,m)$ such that for all $u\in C^{m}(\R^n)$ with support contained in $\Omega$ and $\|\nabla^m u\|_{n/m}\leq 1$, the following uniform inequality holds
\be \label{adam-est}
\frac{1}{|\Omega|}\int_{\Omega} \exp\left( \beta(n, m)|u(x)|^{n/(n-m)}\right)~dx\leq c_0,
\ee
where
\beas
\beta(n, m) &=&  \frac{n}{\omega_{n-1}}\left[\frac{\pi^{n/2} 2^m \Gamma\left((m+1)/2\right)}{\Gamma\left((n-m+1)/2\right)}\right]^{n/(n-m)}, \:\: m \txt{ is odd};\\
&=&  \frac{n}{\omega_{n-1}}\left[\frac{\pi^{n/2} 2^m \Gamma\left(m/2\right)}{\Gamma\left((n-m)/2\right)}\right]^{n/(n-m)}, \:\: m \txt{ is even}.
\eeas
Furthermore, the constant $\beta(n,m)$ in (\ref{adam-est}) is sharp in the sense that if $\beta(n,m)$ is replaced by any larger number, then the integral in (\ref{adam-est}) cannot be bounded uniformly by any constant. 
\end{thm}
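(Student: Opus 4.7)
The plan is to follow the classical three-step scheme: reduce the Adams inequality to a Riesz potential estimate via a representation formula, reduce further to one dimension via symmetric rearrangement, and then prove a sharp one-dimensional exponential integral lemma from which the optimal constant $\beta(n,m)$ emerges directly.

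First, since $u\in C^m(\R^n)$ is compactly supported in $\Omega$, I would express $u$ as a Riesz potential of its top-order derivative. Writing $\nabla^m u=\Delta^{m/2}u$ when $m$ is even and $\nabla^m u=\nabla\Delta^{(m-1)/2}u$ when $m$ is odd, integration against the fundamental solution of $\Delta^{m/2}$ yields a representation $u=G_m\ast\nabla^m u$ whose kernel satisfies $|G_m(x)|\leq\gamma(n,m)|x|^{m-n}$ for an explicit constant $\gamma(n,m)$ built from Gamma functions. The stated value of $\beta(n,m)$ is then precisely $n\,\omega_{n-1}^{-1}\gamma(n,m)^{-n/(n-m)}$; the separate formulas for even and odd $m$ reflect only the differing Gamma factors in $G_m$.

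Next, setting $f=\nabla^m u$ with $\|f\|_{n/m}\leq 1$, I would pass to the symmetric decreasing rearrangements $u^\ast,f^\ast$. Since $t\mapsto\exp(\beta t^{n/(n-m)})$ is increasing and $|x-y|^{m-n}$ is radial decreasing, a Riesz rearrangement argument shows it suffices to bound (\ref{adam-est}) for the radial profile. The substitutions $t=\log(|\Omega|/|B_r|)$ and $s=\log(|\Omega|/|B_\rho|)$ then convert (\ref{adam-est}) into the one-dimensional assertion: for any $\phi\geq 0$ with $\int_\R\phi^p\,ds=1$, $p=n/m$,
\bes
\int_0^\infty\exp\left\{\left(\int_{-\infty}^\infty a(s,t)\phi(s)\,ds\right)^{p'}-t\right\}dt\leq c_0,
\ees
where the kernel $a(s,t)$ arises from $|x-y|^{m-n}$ after rearrangement and satisfies $a(s,t)\leq 1$ when $s<t$ together with a uniform bound $\bigl(\int_t^\infty a(s,t)^{p'}\,ds\bigr)^{1/p'}\leq b<\infty$.

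The crux, and the main obstacle, is Adams' sharp one-dimensional lemma producing the preceding exponential bound from those hypotheses on $a$. I would establish it by a level-set argument: split the $s$-integration into a near piece $(t-T,t)$ controlled by the pointwise bound $a\leq 1$, estimate the far piece by H\"older using the sharp $L^{p'}$-integrability of $a$, and then dominate the superlevel sets $\{t:F(t)>\la\}$ of the exponent $F(t)$ by a geometric series in $\la$. The sharpness of the exponent $p'=n/(n-m)$ in H\"older is exactly what makes the cancellation against the linear $-t$ term tight; any larger $\beta$ would destroy this balance and cause divergence. Finally, optimality of $\beta(n,m)$ in (\ref{adam-est}) follows from the standard Moser-type extremal sequence, namely truncations of the fundamental solution of $\Delta^{m/2}$ normalized so that $\|\nabla^m u_k\|_{n/m}=1$, which concentrate a logarithmic spike that drives the integral to infinity the moment $\beta$ exceeds $\beta(n,m)$.
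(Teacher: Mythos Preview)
This theorem is not proved in the paper at all: it is Adams' 1988 result, quoted in the Introduction with an explicit citation \cite{Ad} and used purely as background and motivation. There is therefore no ``paper's own proof'' to compare your proposal against. What the paper actually proves are Theorems~\ref{thm0}--\ref{thm-HA} on symmetric spaces, and in doing so it \emph{adapts} the Adams machinery (O'Neil's lemma, the change of variables to the one-dimensional integral $\int_0^\infty e^{-F(t)}\,dt$, the kernel $a(s,t)$, and the level-set argument for $E_\lambda$) rather than reproving the Euclidean Theorem~\ref{thm-Ad}.

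That said, your sketch is a faithful outline of Adams' original argument in \cite{Ad}: representation of $u$ as a Riesz potential of $\nabla^m u$, passage to rearrangements via O'Neil, reduction to the one-dimensional exponential lemma, and sharpness via the Moser-type test sequence. One small inaccuracy: Adams does not use the Riesz symmetric-rearrangement inequality to pass to radial profiles; he uses O'Neil's convolution rearrangement lemma directly on the line $(0,\infty)$, which is exactly what the present paper also does in its proof of Theorem~\ref{thm0}. Otherwise your description of the one-dimensional lemma and the role of the sharp H\"older exponent $p'$ is correct.
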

For $m=1$, it recovers the Trudinger-Moser inequality. In \cite{Ad}, Theorem \ref{thm-Ad} was proved by representing functions by Riesz kernels and establishing inequalities for integral operators governed by these kernels. There have been many extensions and generalizations of this result to different settings. For instance, L. Fontana in \cite{F} obtained a sharp version of the inequality (\ref{adam-est}) on compact Riemannian manifolds. When $\Omega$ is a Euclidean ball, more refined results have been established in recent years. In dimension two, Wang and Ye \cite{WY} proved a Hardy-Trudinger-Moser inequality on a planar disk $\mathbb B^2$ by combining the Trudinger-Moser inequality with the Hardy inequality.
Their result is as follows
\begin{thm} \label{thm-d2}
Let $\mathbb B^2=\{z=x+iy: |z|= \sqrt{x^2+y^2} < 1\}$. There exists a constant $C>0$ such that for all $u\in C_c^\infty(\mathbb B^2)$ satisfying
\bes
\|u\|_{\mathcal H}=\int_{\mathbb B^2} |\nabla u(z)|^2~dxdy-\int_{\mathbb B^2}\frac{u(z)^2}{(1-|z|^2)^2}~dxdy \leq 1,
\ees
we have
\bes
\int_{\mathbb B^2} e^{4\pi u(z)^2}~dxdy< C< \infty.
\ees
\end{thm}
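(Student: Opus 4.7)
The plan is to reinterpret the inequality on the Euclidean disk $\mathbb B^2$ as a weighted Moser-Trudinger inequality on the Poincar\'e disk model of the hyperbolic plane $\mathbb H^2$. With the hyperbolic metric $g=4(1-|z|^2)^{-2}|dz|^2$, volume element $dV=4(1-|z|^2)^{-2}dxdy$, and geodesic distance $t=d_{\mathbb H^2}(0,z)=\log\tfrac{1+|z|}{1-|z|}$, the two-dimensional Dirichlet integral is conformally invariant and $\int u^2(1-|z|^2)^{-2}dxdy=\tfrac14\int u^2\,dV$, so
\[
\|u\|_{\mathcal H}=\int_{\mathbb H^2}\bigl(|\nabla_{\mathbb H^2}u|^2-\tfrac14 u^2\bigr)\,dV,
\]
the hyperbolic Dirichlet energy shifted by the spectral bottom $\rho^2=1/4$ of $-\Delta_{\mathbb H^2}$. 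Its non-negativity is precisely the sharp Hardy inequality on $\mathbb B^2$.

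First I would apply a hyperbolic decreasing rearrangement (of Baernstein or star-function type) to reduce to radial, radially decreasing $u(z)=v(t)$: this preserves the exponential integral on the left (the Euclidean Lebesgue measure is rotation-invariant), and by a Polya-Szego inequality adapted to the shifted hyperbolic Dirichlet energy it can only decrease $\|u\|_{\mathcal H}$. In geodesic polar coordinates the problem then becomes the one-dimensional statement: assuming
\[
2\pi\int_0^\infty\bigl(v'(t)^2-\tfrac14v(t)^2\bigr)\sinh t\,dt\le 1,
\]
prove $\pi\int_0^\infty e^{4\pi v(t)^2}\sinh(t/2)\cosh^{-3}(t/2)\,dt\le C$. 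Next I would perform the ground-state substitution $v(t)=e^{-t/2}w(t)$, which diagonalizes the leading behavior of the radial operator $-\sinh^{-1}t\,\partial_t(\sinh t\,\partial_t)-1/4$; a direct calculation together with one integration by parts (the boundary terms vanish since $u\in C_c^\infty(\mathbb B^2)$) yields
\[
2\pi\int_0^\infty\bigl(v'^2-\tfrac14v^2\bigr)\sinh t\,dt=2\pi\int_0^\infty e^{-t}\sinh t\cdot w'(t)^2\,dt+\pi\int_0^\infty e^{-2t}w(t)^2\,dt,
\]
with both terms manifestly non-negative. Near $t=0$, $e^{-t}\sinh t\sim t$ matches the radial weight of the Euclidean two-dimensional Dirichlet integral and $\sinh(t/2)\cosh^{-3}(t/2)\sim t/2$ matches its exponential integrand; near infinity $e^{-t}\sinh t$ is bounded while $\sinh(t/2)\cosh^{-3}(t/2)\sim e^{-t}$ makes the large-$t$ tail of the exponential integral trivially finite. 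A one-dimensional Moser-Trudinger estimate in the spirit of Moser's original rearrangement argument then delivers the desired exponential integrability with the sharp constant $\beta_2=4\pi$.

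The main obstacle is the symmetrization step. Because the Hardy weight $(1-|z|^2)^{-2}$ is radially \emph{increasing}, the classical Polya-Szego and Hardy-Littlewood rearrangement inequalities pull in opposite directions when applied to $\|u\|_{\mathcal H}$, and the two terms of the seminorm cannot be treated independently. The correct tool is a rearrangement compatible with the hyperbolic volume element, whose Polya-Szego-type inequality for the shifted energy follows from the co-area formula on $\mathbb H^2$ combined with the hyperbolic isoperimetric inequality. Tracking the sharp constant $4\pi$ through the ground-state substitution without introducing spurious behavior at $t=0$ is the other delicate point in executing the plan.
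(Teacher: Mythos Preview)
The paper does not prove this theorem. It is quoted in the introduction as prior work of Wang and Ye \cite{WY}, with the remark that Lu and Yang \cite{LY1} later gave a rearrangement-free proof. The paper's own results (Theorems \ref{thm0}--\ref{thm-HA}) concern symmetric spaces of arbitrary rank and are proved via Bessel--Green--Riesz kernel estimates, O'Neil's lemma, and the Adams machinery, with no symmetrization. So there is no ``paper's own proof'' to compare your proposal against.

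On the proposal itself: the identification $\|u\|_{\mathcal H}=\int_{\mathbb H^2}(|\nabla_{\mathbb H^2}u|^2-\frac14u^2)\,dV$ and the ground-state computation are correct, and the overall strategy is close to the hyperbolic approach in the Mancini--Sandeep line of work. But your symmetrization step contains a concrete error. You write that hyperbolic decreasing rearrangement ``preserves the exponential integral on the left (the Euclidean Lebesgue measure is rotation-invariant).'' Rotation-invariance is irrelevant here: hyperbolic rearrangement is equimeasurable with respect to $dV$, not $dxdy$, so it does \emph{not} preserve $\int_{\mathbb B^2}e^{4\pi u^2}\,dxdy$. What is true, and sufficient, is that it \emph{increases} this integral: writing $dxdy=\frac14(1-|z|^2)^2\,dV$ with $(1-|z|^2)^2=\cosh^{-4}(t/2)$ radially decreasing and hence equal to its own hyperbolic rearrangement, the Hardy--Littlewood inequality on $(\mathbb H^2,dV)$ gives $\int e^{4\pi u^2}(1-|z|^2)^2\,dV\le\int e^{4\pi (u^*)^2}(1-|z|^2)^2\,dV$. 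With this correction the reduction to radial functions goes through. Conversely, the step you flag as the ``main obstacle'' --- P\'olya--Szeg\H{o} for the shifted energy --- is actually routine once you commit to hyperbolic rearrangement: the hyperbolic Dirichlet integral decreases by co-area plus the hyperbolic isoperimetric inequality, the term $\int u^2\,dV$ is preserved exactly by equimeasurability, and hence $\|u\|_{\mathcal H}$ decreases.
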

This is the borderline case of the first order Hardy-Sobolev-Maz'ya inequality on $\mathbb B^2$ for any $1 \leq p < \infty$ \cite{B1, Mz}:
\bes
\int_{\mathbb B^2} |\nabla u(x)|^2~dx-\int_{\mathbb B^2}\frac{u(x)^2}{(1-|x|^2)^2}~dx \geq C \left(\int_{\mathbb B^2} |u(x)|^p~dx \right)^{\frac{1}{p}}.
\ees
In \cite{LY1}, Lu and Yang give a rearrangement-free argument of the result of \cite{WY} and show using Riemann mapping theorem that the Hardy-Trudinger-Moser inequality holds for any bounded and convex domain in $\R^2$. 

Several variants of Trudinger-Moser and Adams type inequalities has been established in unbounded domains of Euclidean spaces (see for instance \cite{Co}). In \cite{LL}, N. Lam and G. Lu developed a new approach to establish these types of sharp inequalities in unbounded domains of Euclidean
spaces without using the standard symmetrization. This approach can be applied in the context of Riemannian and sub-Riemannian manifolds where the symmetrization argument does not work (see e.g. \cite{LL2, YSK}). They proved the following Adams type inequalities on Sobolev space $W^{\alpha, n/\alpha}(\R^n)$ of fractional order $\alpha$ for  $0 < \alpha < n$:
\begin{thm} \label{thm-frac-rn}
Let $0< \alpha< n$ be an arbitrary real positive number, $p= n/\alpha$ and $\tau>0$. There holds
\bes
\sup_{u\in W^{\alpha, p}(\R^n), \|(\tau I-\Delta)^{\alpha/2}u\|_p\leq 1}\int_{\R^n} \Phi_p\left(\beta_0(n, \alpha)|u(x)|^{p^\prime}\right)~dx\leq C< \infty,
\ees
where $\beta_0(n, \alpha)= \frac{n}{\omega_{n-1}}\left[\frac{\pi^{n/2} 2^\alpha \Gamma\left(\alpha/2\right)}{\Gamma\left((n-\alpha)/2\right)}\right]^{p^\prime}$, $1/p+1/p^\prime=1$ and 
\bes
\Phi_p(t)= e^t- \sum_{j=0}^{j_p-2}\frac{t^j}{j!}, \:\: j_p= \min\{j\in \N: j\geq p\}.
\ees
Furthermore, this inequality is sharp in the sense that if $\beta_0(n, \alpha)$ is replaced by any $\beta> \beta_0(n, \alpha)$, then the supremum is infinite.
\end{thm}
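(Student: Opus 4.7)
The plan is to reduce the problem to a local Adams inequality on Euclidean balls via the Bessel potential representation, following the rearrangement-free strategy of Lam and Lu. Setting $f = (\tau I-\Delta)^{\alpha/2} u$, one has $\|f\|_p \leq 1$ and $u = G_{\alpha,\tau} \ast f$, where the Bessel kernel admits the subordination integral representation
\[
G_{\alpha,\tau}(x) = \frac{1}{(4\pi)^{\alpha/2}\Gamma(\alpha/2)}\int_0^\infty s^{(\alpha-n)/2-1} e^{-s\tau - |x|^2/(4s)}\,ds.
\]
A routine analysis of this formula yields $G_{\alpha,\tau}(x) = \gamma(n,\alpha)|x|^{\alpha-n} + O(|x|^{\alpha-n+2})$ as $|x| \to 0$, with $\gamma(n,\alpha)=\Gamma((n-\alpha)/2)/(2^\alpha \pi^{n/2}\Gamma(\alpha/2))$, together with $G_{\alpha,\tau}(x) = O(e^{-c|x|})$ at infinity. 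The sharp constant $\beta_0(n,\alpha) = n\,\gamma(n,\alpha)^{-p'}/\omega_{n-1}$ is precisely that dictated by this Riesz asymptotic at the origin.

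Next I would decompose $G_{\alpha,\tau} = K_0 + K_\infty$, where $K_0(x) = \gamma(n,\alpha)|x|^{\alpha-n}\chi_{B(0,1)}(x)$ reproduces the Riesz singularity and $K_\infty \in L^r(\R^n)$ for every $r \in [1,\infty]$. Writing $u = v + w$ with $v = K_0 \ast f$ and $w = K_\infty \ast f$, Young's inequality forces $\|w\|_q \leq C_q$ for every $q \geq p$, and in particular $\|w\|_\infty \leq C$. Since $K_0$ is compactly supported, on each ball $B(x_j,1)$ in a locally finite covering of $\R^n$ with bounded overlap, the restriction $v|_{B(x_j,1)}$ depends only on $f\chi_{B(x_j,2)}$; Adams' original argument via O'Neil's rearrangement lemma then gives the sharp local estimate
\[
\int_{B(x_j,1)}\bigl[\exp\bigl(\beta_0(n,\alpha)|v(x)|^{p'}\bigr)-1\bigr]\,dx \leq C\|f\chi_{B(x_j,2)}\|_p^p,
\]
and summation in $j$ controls $\int_{\R^n}[\exp(\beta_0 |v|^{p'})-1]\,dx$ by $C\|f\|_p^p\leq C$.

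To finish I would expand $|u|^{p'} = |v+w|^{p'}$ via a binomial-type inequality and transfer the exponential estimate from $v$ to $u$. The low-order Taylor terms of $\exp$ below degree $j_p$ fail to be globally integrable on the unbounded domain $\R^n$, but these are exactly the ones removed by $\Phi_p$. The main obstacle is preserving the sharp constant $\beta_0(n,\alpha)$ through this decomposition: a crude application of $(a+b)^{p'} \leq (1+\varepsilon)a^{p'}+C_\varepsilon b^{p'}$ would degrade it. The remedy is to combine the uniform bound on $w$ with the cancellation built into $\Phi_p$, so that after a careful $\varepsilon \to 0$ limit only the mixed terms $|v|^k |w|^{p'-k}$ with $k\geq j_p - 1$ survive, each of which lies in $L^1(\R^n)$ by the estimates above. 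Sharpness of $\beta_0(n,\alpha)$ is finally confirmed by testing against a Moser-type sequence concentrated near a single point, which is local in nature and insensitive to the Bessel correction, thus reducing to the Euclidean sharpness argument underlying Theorem \ref{thm-Ad}.
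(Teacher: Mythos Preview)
First, note that Theorem~\ref{thm-frac-rn} is not proved in this paper at all: it is quoted from Lam--Lu \cite{LL} as background. The paper's own contribution is the symmetric-space analogues (Theorems~\ref{thm0} and~\ref{thm1}), and it is the proof of Theorem~\ref{thm1} that you should compare against, since it is the exact structural counterpart of the statement you are attempting.

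Your strategy differs from the paper's in a fundamental way. You split the \emph{kernel} as $G_{\alpha,\tau}=K_0+K_\infty$, apply Adams' local inequality ball-by-ball on a covering, and then sum. The paper (and Lam--Lu) instead split the \emph{domain}: set $\Omega(u)=\{|u|\ge 1\}$, use the Sobolev embedding $\|u\|_p\le S_p\|(\tau I-\Delta)^{\alpha/2}u\|_p$ to obtain $|\Omega(u)|\le S_p$ uniformly in $u$, apply the local Adams inequality (Theorem~\ref{thm0}) once on this single set of bounded measure, and on the complement $\{|u|<1\}$ bound $\Phi_p(\beta_0|u|^{p'})$ termwise by a multiple of $|u|^p$ since $kp'\ge p$ for $k\ge j_p-1$. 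No covering or kernel decomposition is needed.

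Your argument has a genuine gap at the summation step. You claim
\[
\int_{B(x_j,1)}\bigl[\exp\bigl(\beta_0(n,\alpha)|v|^{p'}\bigr)-1\bigr]\,dx \le C\,\|f\chi_{B(x_j,2)}\|_p^p,
\]
but Adams' theorem (Theorem~\ref{thm-Ad}) only gives a bound of the form $C\,|B(x_j,1)|$, a constant independent of $\|f\chi_{B(x_j,2)}\|_p$. Summed over an infinite covering of $\R^n$ this diverges. Obtaining a right-hand side that scales like $\|f\|_p^p$ at the \emph{critical} exponent $\beta_0$ is precisely the difficulty that the domain-splitting trick circumvents; it is not a consequence of ``Adams' original argument via O'Neil's lemma.'' The subsequent step, transferring the exponential bound from $v$ to $u=v+w$ while preserving the sharp constant, is also left at the level of a heuristic (``a careful $\varepsilon\to 0$ limit''), and in practice this is where naive decompositions lose sharpness. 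The domain-splitting route avoids both issues simultaneously: it invokes the local Adams inequality exactly once on a set of uniformly bounded measure, and the function $\Phi_p$ is designed precisely so that the complementary region contributes only $\|u\|_p^p$.
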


In the case of real hyperbolic spaces $\mathbb B^n$, Trudinger-Moser, Adams inequalities have been investigated in details. From a conformal point of view, an Adams
inequality in the hyperbolic space was proved by Karmakar and Sandeep \cite{KS}. On the other hand, in a series of papers \cite{LY2, LLY1, LLY2}, using the Riesz kernel estimates and Fourier analysis on $\mathbb B^n$,  Li et al. proved sharp Adams and Hardy-Adams inequalities on $\mathbb B^n$. Precisely, in \cite{LLY2} they proved the results for any fractional order $\alpha< n$ on $\mathbb B^n$ in all dimension $n$. These generalize their earlier results for the sharp Hardy-Adams inequalities corresponding to the bi-Laplace-Beltrami operator $(-\Delta_{\mathbb B^4})^2$ on the hyperbolic space $\mathbb B^4$ of dimension four in \cite{LY2} and to $n/2$-th (integer) power of the Laplace-Beltrami operator $(-\Delta_{\mathbb B^n})^{n/2}$ on $\mathbb B^n$ of any even dimension $n \geq 4$  in \cite{LLY1}. The main result in \cite[Theorem 1.11]{LLY2} is as follows.
\begin{thm} \label{thm-h}
Let $n \geq 3, 0< \alpha< n$ be an arbitrary positive number, $p= n/\alpha$ and $\zeta$ satisfies $\zeta> (1/p- 1/2)(n-1)/2$  if $1< p< 2$ and $\zeta> (1/2-1/p)(n-1)$ if $p\geq 2$. Then there exists $C=C(\zeta, n, \alpha)$ such that
\bes
\int_{\mathbb B^n} \Phi_p\left(\beta_0(n, \alpha)|u(x)|^{p^\prime}\right)~dV(x)\leq C,
\ees
for any $u\in W^{\alpha, p}(\mathbb B^n)$ with $\int_{\mathbb B^n} |\left(-\Delta_{\mathbb B^n}-(n-1)^2/4+\zeta^2\right)^{\alpha/2}u(x)|^p~dV(x)\leq 1$. Here $\beta_0$ and $\Phi_p$ are defined as in Theorem \ref{thm-frac-rn} and $dV(x)= 2^n(1-|x|)^{-2n}~ dx$ is the hyperbolic volume.
Furthermore, this inequality is sharp in the sense that if $\beta_0(n, \alpha)$ is replaced by any $\beta> \beta_0(n, \alpha)$, then the inequality can no longer hold with some C independent of $u$.
\end{thm}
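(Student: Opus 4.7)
My plan is to reduce Theorem \ref{thm-h} to sharp pointwise estimates on the Bessel--Green--Riesz kernel of the shifted fractional operator and then to apply a rearrangement-free argument in the spirit of Lam and Lu. Write $f = (-\Delta_{\mathbb B^n}-(n-1)^2/4+\zeta^2)^{\alpha/2}u$, so $\|f\|_p\leq 1$; one then has the convolution representation $u = G_{\alpha,\zeta}\ast f$ (hyperbolic convolution), where the Helgason--Fourier transform of $G_{\alpha,\zeta}$ is $(|\la|^2+\zeta^2)^{-\alpha/2}$. The shift by $\zeta^2>0$ is strictly positive on the $L^2$-spectrum of $-\Delta_{\mathbb B^n}-(n-1)^2/4$ (which equals $[0,\infty)$), so the kernel is well defined and representable as an explicit integral against the Plancherel measure.

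The main technical step is to establish sharp two-sided estimates for $G_{\alpha,\zeta}(x)$ in terms of $r=d(x,o)$. I expect that, as $r\to 0$, $G_{\alpha,\zeta}(x) = c_{n,\alpha}\,r^{\alpha-n}+O(r^{\alpha-n+1})$, where $c_{n,\alpha}$ is the classical Euclidean Riesz constant; this local singularity is what is responsible for the sharp exponent $\beta_0(n,\alpha)$. For $r\gtrsim 1$, I would prove the exponential decay
\bes
|G_{\alpha,\zeta}(x)|\lesssim (1+r)^{\nu(\zeta,\alpha)}\,e^{-(\frac{n-1}{2}+\zeta)r},
\ees
arising from the standard asymptotics of Harish-Chandra's $\varphi_0$ together with Paley--Wiener type control on the inverse spherical transform. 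Since the hyperbolic volume element in polar coordinates grows like $\sinh^{n-1}r\,dr\asymp e^{(n-1)r}\,dr$, the stated lower bound on $\zeta$ turns out to be precisely the threshold that forces $G_{\alpha,\zeta}\,\chi_{\{r\geq 1\}}\in L^{p^\prime}(\mathbb B^n)$; the different bounds for $1<p<2$ versus $p\geq 2$ reflect an additional Kunze--Stein type gain in the subcritical regime.

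With these kernel bounds in hand, I would split $G_{\alpha,\zeta}=K_0+K_\infty$, with $K_0$ supported in $\{r\leq 1\}$ and pointwise dominated, under the exponential chart, by a Euclidean Riesz potential of order $\alpha$, and $K_\infty\in L^{p^\prime}(\mathbb B^n)$. Then $u=K_0\ast f+K_\infty\ast f$, and H\"older gives $\|K_\infty\ast f\|_\infty\leq \|K_\infty\|_{p^\prime}\|f\|_p=:A$. On the level set $\{|u|>t\}$ one has $|K_0\ast f(x)|>t-A$, so after partitioning $\mathbb B^n$ into a uniformly locally finite cover by unit-sized geodesic balls the problem is pulled back to a family of Euclidean Adams problems with the local Riesz kernel. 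Theorem \ref{thm-frac-rn} then yields
\bes
\int_{\mathbb B^n}\Phi_p\!\left(\beta_0(n,\alpha)|u(x)|^{p^\prime}\right)\,dV(x)\leq C,
\ees
with the Euclidean sharp constant $\beta_0(n,\alpha)$ preserved through the localization. Sharpness is verified by transplanting the standard Moser sequence $m_\epsilon$ from $\R^n$ to a small geodesic ball via the exponential chart at the origin and computing directly that any $\beta>\beta_0(n,\alpha)$ makes the integral diverge.

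The main obstacle I anticipate is securing the sharp local expansion $G_{\alpha,\zeta}(x)=c_{n,\alpha}\,r^{\alpha-n}+\text{l.o.t.}$ with the exact Euclidean leading coefficient. Since $G_{\alpha,\zeta}$ is defined spectrally through the Plancherel measure $|\mathbf c(\la)|^{-2}\,d\la$, whose high-frequency asymptotics encode the Euclidean Riesz kernel only after cancelling the $\mathbf c$-function growth, making this cancellation quantitative and uniform in $\zeta$ is the technical heart of the argument, and it is where the delicate harmonic analysis on $\mathbb B^n$ interacts most strongly with the sharpness of the constant $\beta_0(n,\alpha)$.
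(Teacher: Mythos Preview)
Note first that Theorem \ref{thm-h} is quoted from \cite{LLY2}; the relevant comparison here is with the paper's own proofs of Theorems \ref{thm0} and \ref{thm1}, which specialize to (and slightly sharpen) Theorem \ref{thm-h} when $X=\mathbb B^n$, $|\rho|=(n-1)/2$. Your kernel programme is correct and matches the paper: the local expansion $G_{\alpha,\zeta}(x)=\gamma(\alpha)^{-1}r^{\alpha-n}+\text{l.o.t.}$ and the exponential decay at infinity are exactly Proposition \ref{prop-k-zb} and Theorem \ref{thm-est-infty}. (The paper obtains the sharp local constant from the Mellin representation against the short-time heat kernel expansion, bypassing the ${\bf c}$-function cancellation you flag as the main obstacle.)

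The gap is the step ``partition $\mathbb B^n$ into unit geodesic balls and apply Theorem \ref{thm-frac-rn}''. On each ball $B_i$ a local Adams estimate gives at best $\int_{B_i}\exp(\beta_0|K_0\ast f|^{p'})\,dV\le C|B_i|$, and summing over an infinite cover yields $+\infty$; the subtraction built into $\Phi_p$ is exactly what rescues the global integral, but exploiting it requires a uniform bound on $\|u\|_p$, i.e.\ a Sobolev embedding $\|u\|_p\lesssim\|(-\Delta-|\rho|^2+\zeta^2)^{\alpha/2}u\|_p$. This is precisely where the hypothesis on $\zeta$ enters (Corollary \ref{cor-sobolev}, via Anker's multiplier theorem), and you never invoke it --- your $K_\infty\in L^{p'}$ gives $\|K_\infty\ast f\|_\infty<\infty$ but says nothing about $|\{|u|\ge1\}|$. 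The paper's route is: (i) prove a finite-measure Adams inequality (Theorem \ref{thm0}) directly on $X$ via O'Neil's rearrangement lemma and Adams' $F(t)$ argument, fed by the sharp asymptotics of $k_{\zeta,\alpha}^\ast$ near $0$ and its $L^{p'}$ integrability at infinity (Lemmas \ref{lem-beta-ast}, \ref{lem-est-k-ast-inft}); (ii) use the Sobolev embedding to bound $|\{|u|\ge1\}|$ uniformly in $u$; (iii) apply (i) on that set and the Taylor tail of $\Phi_p$ on its complement. Your splitting $G=K_0+K_\infty$ with $K_\infty\in L^{p'}$ is morally the same input as Lemma \ref{lem-est-k-ast-inft}, but it should feed the rearrangement argument in step (i), not a ball-by-ball pullback to $\R^n$.
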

We notice that $(n-1)|1/2-1/p|< (n-1)/2$ provided $p>1$. Choosing $\zeta= (n-1)/2$ in Theorem \ref{thm-h}, one get the following Adams inequality on $\mathbb B^n$ \cite[Theorem 1.12]{LLY2}.
\begin{thm} \label{thm2}
Let $n\geq 3, 0< \alpha< n$ be an arbitrary real positive number and $p= n/\alpha$. Then there exists $C=C(n, \alpha)$ such that
\bes
\int_{\mathbb B^n} \Phi_p\left(\beta_0(n, \alpha)|u(x)|^{p^\prime}\right)~dV(x)\leq C,
\ees
for any $u\in W^{\alpha, p}(\mathbb B^n)$ with $\int_{\mathbb B^n} |(-\Delta_{\mathbb B^n})^{\alpha/2} u(x)|^p~dV(x)\leq 1$.
\end{thm}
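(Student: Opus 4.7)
The plan is to derive Theorem \ref{thm2} directly from Theorem \ref{thm-h} by choosing the spectral shift parameter $\zeta$ so that the Hardy-type correction term inside the shifted Laplacian is exactly cancelled. Specifically, I would set $\zeta = (n-1)/2$, which gives $\zeta^2 = (n-1)^2/4$, and observe that
\[
-\Delta_{\mathbb B^n} - \frac{(n-1)^2}{4} + \zeta^2 \;=\; -\Delta_{\mathbb B^n}.
\]
Since $-\Delta_{\mathbb B^n}$ is a nonnegative self-adjoint operator (its $L^2$-spectrum is $[(n-1)^2/4,\infty)$), the fractional powers on both sides agree as operators defined through the spectral/functional calculus, so the admissibility constraint $\int_{\mathbb B^n}|(-\Delta_{\mathbb B^n}-(n-1)^2/4+\zeta^2)^{\alpha/2}u|^p\,dV\leq 1$ in Theorem \ref{thm-h} reduces exactly to $\int_{\mathbb B^n}|(-\Delta_{\mathbb B^n})^{\alpha/2}u|^p\,dV\leq 1$, which is the hypothesis of Theorem \ref{thm2}.

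The only thing that needs to be checked before invoking Theorem \ref{thm-h} is that $\zeta=(n-1)/2$ satisfies the strict lower bound required there. For $1<p<2$ the required bound is $\zeta>(1/p-1/2)(n-1)/2$; since $1/p-1/2<1/2$, this upper bound on the right-hand side is strictly less than $(n-1)/4<(n-1)/2$, so the choice is admissible. For $p\geq 2$ the required bound is $\zeta>(1/2-1/p)(n-1)$; since $1/2-1/p<1/2$, the right-hand side is strictly less than $(n-1)/2$, so again the choice is admissible. In both cases the inequality is strict, exactly as demanded by Theorem \ref{thm-h}.

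With these two ingredients in hand (the algebraic simplification of the shifted operator and the verification of the hypothesis on $\zeta$), the conclusion of Theorem \ref{thm-h} with $\zeta=(n-1)/2$ is literally the statement of Theorem \ref{thm2}, with constant $C=C((n-1)/2,n,\alpha)$ depending only on $n$ and $\alpha$. There is no substantive obstacle here; all the analytic work (Fourier analysis on $\mathbb B^n$, Riesz kernel estimates, and the rearrangement-free Adams-type argument) is already encoded in Theorem \ref{thm-h}. The role of Theorem \ref{thm2} is to record that the pure Laplace-Beltrami operator is accessible because the bottom of its $L^2$-spectrum, $(n-1)^2/4$, is strictly positive and matches the square of a parameter $\zeta$ that still lies in the allowed range.
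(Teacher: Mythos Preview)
Your proposal is correct and follows exactly the same approach as the paper: the paper observes that $(n-1)\,|1/2-1/p|<(n-1)/2$ for $p>1$, so the choice $\zeta=(n-1)/2$ is admissible in Theorem~\ref{thm-h}, and then the shifted operator $-\Delta_{\mathbb B^n}-(n-1)^2/4+\zeta^2$ collapses to $-\Delta_{\mathbb B^n}$, yielding Theorem~\ref{thm2} immediately.
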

To prove Theorem \ref{thm-h}, the authors derived the optimal bounds for the Green functions of the fractional
Laplacians when the hyperbolic distance is small. To get these optimal bounds, they heavily used the explicit expression of the heat kernel available for the hyperbolic spaces. For the bounds corresponding to large hyperbolic distance, they used the results due to J. Anker and L. Ji \cite{AJ}.
In the same paper \cite{LLY2}, Li et al. have also established fractional order Sobolev embedding theorem on hyperbolic spaces and using this they proved the following Hardy-Adams inequality on $\mathbb B^n$ for $p=2$.
\begin{thm}\cite[Theorem 1.14]{LLY2} \label{thm-LLY2}
Let $n\geq 3, \zeta>0$ and $s$ satisfies $1\leq s< 3/2$ if  $n\geq 6$, $1\leq s\leq 5/4$ if $n=5$ and $s=1$ if $n=3, 4$. Then there exists $C=C(\zeta, n)$  such that
\bes
\int_{\mathbb B^n} \left[e^{\beta_0({n,n/2})u(x)^2}-1- \beta_0(n, n/2)u(x)^2\right]~dV(x)\leq C,
\ees
for any $u\in W^{n/2, 2}(\mathbb B^n)$ with
\be \label{eqn-ha}
\int_{\mathbb B^n} |(-\Delta_{\mathbb B^n})^{s/2}(-\Delta_{\mathbb B^n}-\frac{(n-1)^2}{4}+\zeta^2)^{(n-2s)/4} u(x)|^2~dV(x) - C_{\zeta,s} \int_{\mathbb B^n}|u(x)|^2~dV(x)\leq 1,
\ee
where $C_{\zeta,s}=\frac{(n-1)^{2s}\zeta^{n-2s}}{4^s}$.
\end{thm}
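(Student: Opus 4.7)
The plan is to combine the Adams inequality on $\mathbb B^n$ (Theorem \ref{thm2} with $\alpha=n/2$, $p=2$) with a Hardy-type improvement coming from the fact that the spectrum of $-\Delta_{\mathbb B^n}$ starts at $(n-1)^2/4$. First I would pass to the Helgason-Fourier side. Writing $P=(-\Delta_{\mathbb B^n})^{s/2}(-\Delta_{\mathbb B^n}-(n-1)^2/4+\zeta^2)^{(n-2s)/4}$, the operator $P$ acts as a spectral multiplier with symbol
\bes
m(\lambda) = (\lambda^2 + (n-1)^2/4)^{s/2}\,(\lambda^2 + \zeta^2)^{(n-2s)/4},
\ees
and the key algebraic observation is that $C_{\zeta,s}=m(0)^2$. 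By Plancherel, the hypothesis (\ref{eqn-ha}) reads
\bes
\int_0^\infty\!\!\int_{\mathbb S^{n-1}} \bigl(m(\lambda)^2 - m(0)^2\bigr)\,|\widetilde u(\lambda,\omega)|^2 \,d\omega\,\frac{d\lambda}{|c(\lambda)|^2} \leq 1.
\ees
The multiplier $m(\lambda)^2-m(0)^2$ vanishes to second order at $\lambda=0$ but grows like $\lambda^{n}$ at infinity, so it matches $(-\Delta_{\mathbb B^n})^{n/4}$ at high frequency while being strictly weaker at low frequency. The subtraction of $C_{\zeta,s}\|u\|_2^2$ is precisely the Hardy gain afforded by the spectral gap, and is what provides the decay of $u$ required to integrate an exponential of $u^2$ over the noncompact space $\mathbb B^n$.

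Next I would expand $e^{\beta u^2}-1-\beta u^2=\sum_{k\geq 2}(\beta u^2)^k/k!$ with $\beta=\beta_0(n,n/2)$, reducing the theorem to the uniform bound
\bes
\sum_{k=2}^\infty \frac{\beta_0(n,n/2)^k}{k!}\int_{\mathbb B^n} u(x)^{2k}\,dV(x) \leq C,
\ees
valid whenever the left side of (\ref{eqn-ha}) is at most $1$. The natural way to obtain this is through a fractional Sobolev-type embedding of the form
\bes
\|u\|_{L^{2k}(\mathbb B^n)}^{2k} \leq A\, k^{k-1}\,\bigl(\|Pu\|_2^2 - C_{\zeta,s}\|u\|_2^2\bigr)^{k}, \qquad k\geq 2,
\ees
with $A=A(n,\zeta,s)$ independent of $k$. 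Summing against $\beta_0^k/k!$ and applying Stirling would then yield a convergent series and the claimed constant $C$. Observe that this embedding is strictly stronger than the standard embedding of $W^{n/2,2}(\mathbb B^n)$, because it measures $u$ against a quadratic form that is sensitive to the spectral gap.

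To prove the embedding I would factor $m(\lambda)^2-m(0)^2 = (m(\lambda)-m(0))(m(\lambda)+m(0))$, represent $u$ as a convolution of $Pu$ against the Green kernel $G_{\zeta,s}$ of the associated square-root operator, and estimate $G_{\zeta,s}$ using the Anker--Ji heat kernel asymptotics for large hyperbolic distance together with a local parametrix at small distance. The Kunze--Stein phenomenon on $\mathbb B^n$ would then upgrade the pointwise kernel bounds to the required $L^{2k}$ mapping property with the sharp numerical constant. The principal obstacle is this kernel estimate: one must simultaneously control the leading singularity of $G_{\zeta,s}$ near $r=0$, which is what fixes the sharp constant $\beta_0(n,n/2)$, and a cancellation produced by the $m(0)^2$-subtraction at low frequencies, and these two constraints only balance when $s$ lies in the dimension-dependent ranges stated, accounting for $s<3/2$ when $n\geq 6$, $s\leq 5/4$ when $n=5$, and $s=1$ when $n=3,4$. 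Sharpness of $\beta_0(n,n/2)$ is then inherited from Theorem \ref{thm2} by testing on Moser-type bubbles concentrating at a point of $\mathbb B^n$, since on such profiles the Hardy correction $C_{\zeta,s}\|u\|_2^2$ is of strictly lower order and cannot enlarge the admissible range of $\beta$.
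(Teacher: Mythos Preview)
Your Plancherel reduction is correct and is exactly how the paper handles its own analogue, Theorem~\ref{thm-HA}: the identity $C_{\zeta,s}=m(0)^2$ together with the elementary inequality $(|\lambda|^2+\rho^2)^s-|\lambda|^{2s}\geq \rho^{2s}$ (valid for $s\geq 1$) shows that the hypothesis (\ref{eqn-ha}) implies the weaker condition
\bes
\big\|(-\Delta_{\mathbb B^n}-\tfrac{(n-1)^2}{4})^{s/2}(-\Delta_{\mathbb B^n}-\tfrac{(n-1)^2}{4}+\zeta^2)^{(n-2s)/4}u\big\|_2\leq 1,
\ees
and from that point on the Hardy correction plays no further role.

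The gap is in your second step. Summing embeddings $\|u\|_{2k}^{2k}\leq A\,k^{k-1}(\text{energy})^k$ against $\beta_0^k/k!$ via Stirling converges only when $A\beta_0 e<1$; producing such an embedding with the borderline constant out of Kunze--Stein or kernel bounds is not available and is in fact equivalent in strength to the sharp Adams inequality itself, so the argument is circular. What the paper (and \cite{LLY2}) actually does after the Plancherel reduction is to split $\mathbb B^n$ into $\Omega(u)=\{|u|\geq 1\}$ and its complement. On the complement a \emph{single} Poincar\'e--Sobolev bound $\|u\|_{q_0}\leq C$ (Lemma~\ref{lem-ps2}, Theorem~\ref{thm-p-s}) controls the whole tail $\sum_{k\geq 2}$ at once, since $|u|^{2k}\leq |u|^{q_0}$ there. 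On $\Omega(u)$, whose measure is uniformly bounded, one writes $u=v\ast(k_s\ast k_{\zeta,(n-2s)/2})$ with $\|v\|_2\leq 1$ and runs the Adams--O'Neil rearrangement argument of Theorem~\ref{thm0}, using the sharp local asymptotic $[k_s\ast k_{\zeta,(n-2s)/2}]^\ast(t)\leq \gamma(n/2)^{-1}(nt/\omega_{n-1})^{-1/2}+\mathcal O(t^{-1/2+\epsilon'/n})$ from Lemma~\ref{lem-k-ast} together with Lemma~\ref{lem-phiintegration}. It is this rearrangement step, not a summation of Sobolev inequalities, that delivers the exact constant $\beta_0(n,n/2)$. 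Finally, the dimensional restriction on $s$ is not a ``balancing'' of two kernel regimes but simply the range of validity of the Riesz-kernel and Poincar\'e--Sobolev estimates: in the hyperbolic case the pseudo-dimension is $\nu=3$, which forces $2s<3$.
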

The result above has been obtained under  a weaker assumption, in particular, replacing (\ref{eqn-ha}) by 
\bes
\int_{\mathbb B^n} |(-\Delta_{\mathbb B^n}-(n-1)^2/4)^{s/2}(-\Delta_{\mathbb B^n}-(n-1)^2/4+\zeta^2)^{(n-2s)/4} u(x)|^2~dV(x)\leq 1.
\ees
In the case of even dimension $n > 4$, using Fourier analysis on hyperbolic spaces, it was shown in \cite{LLY1} that for any $\zeta>0$
\beas
&& \int_{\mathbb B^n} (-\Delta_{\mathbb B^n}-(n-1)^2/4)(-\Delta_{\mathbb B^n}-(n-1)^2/4+\zeta^2)^{n/2-1} u(x) \cdot~u(x)~dV(x)\\
&\leq& \int_{\mathbb B^n} |\nabla^{\frac{n}{2}} u(x)|^2~dx -\prod_{k=1}^{n/2} (2k-1)^2 \int_{\mathbb B^n} \frac{u(x)^2}{(1-|x|^2)^n}~dx, \:\: \textit{ for } u\in C_c^\infty(\mathbb B^n).
\eeas
Using the above inequality the following Hardy-Adams inequalities was obtained in \cite[Theorem 1.8]{LLY1} for all hyperbolic spaces $\mathbb B^n$ of even dimension $n\geq 4$.
\begin{thm} 
There exists a constant $C>0$ such that for all $u\in C_c^\infty(\mathbb B^n)$ with
\bes
\int_{\mathbb B^n} |\nabla^{\frac{n}{2}} u(x)|^2~dx -\prod_{k=1}^{n/2} (2k-1)^2 \int_{\mathbb B^n} \frac{u(x)^2}{(1-|x|^2)^n}~dx \leq 1,
\ees
there holds
\bes
\int_{\mathbb B^n} e^{\beta_0(n, n/2) u(x)^2}-1-\beta_0(n, n/2)u(x)^2~dV(x) \leq C.
\ees
\end{thm}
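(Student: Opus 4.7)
The plan is to combine two ingredients already gathered in the excerpt: the operator-theoretic Hardy inequality of \cite{LLY1} recalled immediately before the theorem, and the ``weaker hypothesis'' refinement of Theorem \ref{thm-LLY2} noted right after its statement. The first converts the explicit Hardy-type hypothesis of the theorem into a spectral $L^2$-norm bound, and the second converts that spectral bound into the desired Hardy-Adams inequality. No new estimates are needed; the proof is essentially a spectral-calculus bookkeeping step bridging these two results.

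Concretely, I would fix any $\zeta>0$ (say $\zeta=1$) and set $A=-\Delta_{\mathbb B^n}-(n-1)^2/4$ and $B=A+\zeta^2$. Since the $L^2(\mathbb B^n)$-spectrum of $-\Delta_{\mathbb B^n}$ is $[(n-1)^2/4,\infty)$, the operators $A$ and $B$ are nonnegative self-adjoint and commute, so for $u\in C_c^\infty(\mathbb B^n)$ the spectral theorem gives
\[
\int_{\mathbb B^n}\bigl(AB^{n/2-1}u\bigr)(x)\,u(x)\,dV(x)=\int_{\mathbb B^n}\bigl|A^{1/2}B^{(n-2)/4}u(x)\bigr|^2\,dV(x).
\]
The displayed operator inequality from \cite{LLY1} majorizes the left-hand side by the Hardy-type expression appearing in the hypothesis of the theorem, so under that hypothesis
\[
\int_{\mathbb B^n}\bigl|A^{1/2}B^{(n-2)/4}u(x)\bigr|^2\,dV(x)\le 1.
\]

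Next I would apply the weaker-hypothesis version of Theorem \ref{thm-LLY2} with $s=1$. The admissibility constraint on $s$ in Theorem \ref{thm-LLY2} permits $s=1$ in all relevant dimensions ($s=1$ for $n=3,4$; $s=1\le 5/4$ for $n=5$; $1\le 1<3/2$ for $n\ge 6$), and the bound just obtained is precisely the hypothesis of that refinement. Its conclusion is exactly the claimed Hardy-Adams inequality, with constant $C=C(\zeta,n)$; since $\zeta$ was a free positive parameter, this is a constant depending only on $n$.

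Because both ingredients are quoted lemmas, I do not expect a substantive obstacle. The only mild subtlety is the endpoint $n=4$: the operator inequality from \cite{LLY1} is recalled above only for even $n>4$, so in dimension four one either invokes the direct Fourier-analytic argument of \cite{LY2} tailored to $\mathbb B^4$, or verifies the same $\zeta$-shifted Hardy identity on $\mathbb B^4$ by the same spherical-harmonic decomposition used in \cite{LLY1}. Either route makes the spectral identity above available, after which the application of Theorem \ref{thm-LLY2} is unchanged.
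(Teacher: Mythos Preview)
Your proposal is correct and matches the approach the paper sketches. The paper does not give a self-contained proof of this statement---it is quoted from \cite[Theorem~1.8]{LLY1}---but the surrounding text indicates exactly the two-step route you describe: the operator inequality from \cite{LLY1} reduces the Hardy-type hypothesis to the spectral condition $\int_{\mathbb B^n}|A^{1/2}B^{(n-2)/4}u|^2\,dV\le 1$, and then the weaker-hypothesis refinement of Theorem~\ref{thm-LLY2} (with $s=1$) delivers the conclusion. Your handling of the $n=4$ endpoint via \cite{LY2} is also the appropriate patch, since the displayed operator inequality is recorded only for even $n>4$.
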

This is a borderline case of the sharp higher order Hardy-Sobolev-Maz'ya inequalities on half-spaces $\R^n_+$ and hyperbolic spaces $\mathbb B^n$ proved in \cite{Lu-Yang}.

Very recently, Bertrand and Sandeep \cite{BS} established a Trudinger-Moser-Adams inequality on
Cartan-Hadamard manifold with strictly negative sectional curvature. For other Trudinger-Moser-Adams inequalities on Riemannian manifolds, we
refer to \cite{K, S, SY, Y2, YSK}. 

In the case of real hyperbolic space $\mathbb B^n$, one can carry out very explicit calculations, since Fourier analysis and harmonic analysis tools are available. The space $\mathbb B^n$ is the
simplest example of a Riemannian symmetric space of rank one. Our concern in this article is to establish these inequalities on Riemannian symmetric spaces $X$ of noncompact type of all dimension $n\geq 3$ and of arbitrary rank. Precisely, we prove sharp local and global Adams inequalities (Theorem \ref{thm0}, Theorem \ref{thm1})   on the fractional order Sobolev spaces $W^{\alpha, n/ \alpha}(X)$, $0< \alpha< n$ and Hardy-Adams inequalities (Theorem \ref{thm-HA}) on $W^{n/2, 2}(X)$. Let $\Delta$ denote the Laplace-Beltrami operator on $X$ and let $\rho$ denote the half-sum of all positive roots counted with their multiplicities (see (\ref{defn-rho}) for the definition). We begin with the following sharp local and global Adams inequalities of fractional order on $X$.

\begin{thm} \label{thm0}
Let $n\geq 3, 0< \alpha<n$ be an arbitrary positive number, $p= n/\alpha$ and $\zeta$ satisfies $\zeta> 0$ if $1< p<2$ and $\zeta> 2|\rho|(1/2-1/p)$ if $p\geq 2$. Then for a measurable set $E$ with finite volume in $X$, there exists $C=C(\zeta, n, \alpha, |E|)$ such that
\bes
\frac{1}{|E|} \int_{E} \exp\left(\beta_0(n, \alpha)|u(x)|^{p^\prime}\right)~dx \leq C,
\ees
for any $u\in W^{\alpha, p}(X)$ with $\int_X |(-\Delta- |\rho|^2+\zeta^2)^{\alpha/2}u(x)|^p~dx \leq 1$. Furthermore, this inequality is sharp in the sense that if $\beta_0(n, \alpha)$ is replaced by any $\beta> \beta_0(n, \alpha)$, then the inequality can no longer hold with some C independent of $u$.
\end{thm}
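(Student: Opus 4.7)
The plan is to run the standard Adams/Fontana--Morpurgo scheme on $X$: represent $u$ as a convolution with the Green kernel of the shifted operator, extract sharp pointwise asymptotics for this kernel, and apply a level-set (rearrangement-free) argument of Lam--Lu type, since on symmetric spaces of rank $\geq 2$ ordinary symmetrization is not available. Setting $f = (-\Delta - |\rho|^2 + \zeta^2)^{\alpha/2} u$, so that $\|f\|_p \leq 1$, the Helgason--Fourier inversion on $X$ writes
\bes
u(x) = G_\alpha \ast f(x),
\ees
where $G_\alpha$ is the $K$-bi-invariant convolution kernel of $(-\Delta - |\rho|^2 + \zeta^2)^{-\alpha/2}$. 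The split in the hypothesis on $\zeta$ --- no restriction when $1<p<2$ and $\zeta > 2|\rho|(1/2-1/p)$ when $p \geq 2$ --- is the threshold ensuring that this resolvent has the right behaviour on $L^p(X)$ with a kernel lying in the appropriate Lorentz class at infinity; the improvement for $1<p<2$ reflects the Kunze--Stein phenomenon on semisimple Lie groups.

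\textbf{Kernel estimates.} The technical core is a pair of pointwise bounds for $G_\alpha$. Near the identity I would establish
\bes
G_\alpha(x) = \frac{\Gamma\bigl((n-\alpha)/2\bigr)}{2^{\alpha}\, \pi^{n/2}\, \Gamma(\alpha/2)} \, d(x, o)^{\alpha - n} \bigl(1 + O(d(x,o))\bigr),
\ees
with precisely the \emph{Euclidean} Riesz constant (this is what forces the sharp constant $\beta_0(n,\alpha)$ in the final inequality), while far from the identity $G_\alpha$ decays exponentially at a rate determined by $\zeta$. Both estimates come from the subordination identity
\bes
G_\alpha(x) = \frac{1}{\Gamma(\alpha/2)} \int_0^\infty t^{\alpha/2 - 1}\, e^{-t(\zeta^2 + |\rho|^2)}\, p_t(x)\, dt,
\ees
feeding in the universal small-$t$ parametrix expansion $p_t(x) \sim (4\pi t)^{-n/2} e^{-d(x,o)^2/(4t)}$ of the heat kernel for the near-identity contribution, and the Anker--Ji global heat kernel bounds \cite{AJ} for the long-distance regime. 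The Euclidean constant emerges automatically because small-time heat kernels on any Riemannian manifold are universal to leading order.

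\textbf{Conclusion and sharpness.} With $G_\alpha$ split into a singular local part comparable to the Euclidean Riesz kernel and a remainder with good $L^q$ behaviour, on the fixed finite-volume set $E$ the convolution $G_\alpha \ast f$ is dominated by a Riesz potential plus a bounded term. Adams' original argument, transplanted to a Euclidean ball of volume $|E|$ through the Lam--Lu level-set framework, then delivers the desired bound with constant $\beta_0(n,\alpha)$. Sharpness is obtained by transplanting the standard Moser concentrating sequence at a point $o \in E$ via the exponential map $\exp_o$: since the leading singular coefficient of $G_\alpha$ at $o$ coincides with the Euclidean one, the same computation that shows Euclidean sharpness forces the integral to blow up whenever $\beta > \beta_0(n, \alpha)$.

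\textbf{Main obstacle.} The delicate point is the near-identity asymptotic of $G_\alpha$. Unlike in $\mathbb B^n$, on higher-rank $X$ no closed form for the heat kernel is available, so one must control the error in the small-time parametrix uniformly while pushing it through the subordination integral at non-integer $\alpha$. Splitting that integral at a threshold $t_0$ depending on $d(x,o)$ and treating the two pieces separately --- parametrix on $(0, t_0)$, Anker--Ji on $(t_0, \infty)$ --- is the key manoeuvre, and matching the constants exactly so that the Euclidean Riesz coefficient appears is what makes the argument sharp rather than merely qualitative.
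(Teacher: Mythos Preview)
Your overall scheme --- write $u=f\ast k_{\zeta,\alpha}$ with $\|f\|_p\le 1$, extract sharp near-origin asymptotics for $k_{\zeta,\alpha}$ via the subordination integral and the small-time heat parametrix, then run an Adams-type exponential-integral argument --- is the paper's strategy. Two points in your write-up are off and would cause trouble if taken literally.

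First, invoking a ``rearrangement-free Lam--Lu level-set argument'' for \emph{this} theorem is misplaced. The Lam--Lu decomposition $\{|u|\ge 1\}\cup\{|u|<1\}$ is how the paper passes from the local result (Theorem~\ref{thm0}) to the global one (Theorem~\ref{thm1}); it presupposes the local inequality and cannot prove it. For Theorem~\ref{thm0} the paper follows Adams verbatim: O'Neil's lemma bounds $u^*(t)$ by integrals of $f^*$ and $k_{\zeta,\alpha}^*$, a change of variable reduces matters to $\int_0^\infty e^{-F(t)}\,dt\le C$ with $F(t)=t-(\int a(s,t)\phi(s)\,ds)^{p'}$, and one checks $F$ is bounded below with sublevel sets of linear growth. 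No Schwarz symmetrization on $X$ is needed and nothing is ``transplanted to a Euclidean ball'': the non-increasing rearrangement $f\mapsto f^*$ exists on any $\sigma$-finite measure space, so higher rank causes no obstruction. The sharp constant enters because the near-origin asymptotic yields $k_{\zeta,\alpha}^*(t)\le \gamma(\alpha)^{-1}(nt/\omega_{n-1})^{(\alpha-n)/n}+O(t^{(\alpha-n+\epsilon)/n})$, forcing the auxiliary function $\psi(t)\le 1+O(e^{-\epsilon t/n})$.

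Second, the hypothesis on $\zeta$ is not a Kunze--Stein effect. It enters through the \emph{tail} of $k_{\zeta,\alpha}^*$: from the Anker--Ji large-distance bound one gets $k_{\zeta,\alpha}^*(t)\lesssim t^{-1/2-\zeta'/(2|\rho|)}(\log t)^C$ for $t\ge 2$, and the Adams machinery requires $\int^\infty (k_{\zeta,\alpha}^*)^{p'}\,dt<\infty$, i.e.\ $(1/2+\zeta'/(2|\rho|))\,p'>1$. When $1<p<2$ one has $p'>2$ and any $\zeta'>0$ suffices; when $p\ge 2$ this forces $\zeta'>2|\rho|(1/2-1/p)$. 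The asymmetry is pure exponential volume growth, not convolution smoothing.

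On the kernel asymptotics your direct subordination argument for $k_{\zeta,\alpha}$ with $\zeta>0$ is actually cleaner than the paper's route: since $e^{-\zeta^2 t}$ kills the large-$t$ tail, the constraint $\alpha<l+2|\Sigma_0^+|$ that appears for the Riesz kernel $k_\alpha$ does not arise. The paper instead proves the $\zeta=0$ case directly under that constraint and then reaches general $\beta<n$ by an induction on integer steps combined with a Fontana-type convolution lemma on geodesic balls --- more machinery, but it also feeds the composite-kernel estimates needed later for Theorem~\ref{thm3}.
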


\begin{thm} \label{thm1}
Let $n\geq 3, 0< \alpha< n$ be an arbitrary positive number, $p= n/\alpha$ and $\zeta$ satisfies $\zeta> 2|\rho||1/2-1/p|$. Then there exists $C=C(\zeta, n, \alpha)$ such that
\bes
\int_X \Phi_p\left(\beta_0(n, \alpha)|u(x)|^{p^\prime}\right)~dx\leq C,
\ees
for any $u\in W^{\alpha, p}(X)$ with $\int_X |(-\Delta- |\rho|^2 +\zeta^2)^{\alpha/2} u(x)|^p~dx\leq 1$.
Furthermore, this inequality is sharp in the sense that stated in Theorem \ref{thm0}.  
\end{thm}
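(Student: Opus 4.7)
My plan is to imitate the strategy developed in \cite{LLY2} for the real hyperbolic case, with the analytic input replaced by the sharp heat/resolvent kernel bounds of Anker--Ji \cite{AJ} which are valid on an arbitrary noncompact symmetric space $X$ of arbitrary rank. Write $f=(-\Delta-|\rho|^2+\zeta^2)^{\alpha/2}u$, so that $\|f\|_p\le 1$ and $u=G_{\alpha,\zeta}*f$, where $G_{\alpha,\zeta}$ is the $K$-bi-invariant Green kernel whose spherical Fourier transform equals $(|\lambda|^2+\zeta^2)^{-\alpha/2}$. The entire proof reduces to (i) sharp pointwise estimates on $G_{\alpha,\zeta}$ and (ii) combining them with an Adams-type argument.

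The first and main step will be to prove two-sided estimates for $G_{\alpha,\zeta}$ in terms of the Riemannian distance $r=d(x,e)$: a \emph{local} bound
\bes
G_{\alpha,\zeta}(x)=\frac{2^{-\alpha}\Gamma((n-\alpha)/2)}{\pi^{n/2}\Gamma(\alpha/2)}\,r^{\alpha-n}+O\bigl(r^{\alpha-n+\epsilon}\bigr),\qquad r\to 0,
\ees
identifying the Euclidean Riesz-kernel leading term (so that the constant $\beta_0(n,\alpha)$ will be forced to appear), together with a \emph{global} bound
\bes
G_{\alpha,\zeta}(x)\lesssim (1+r)^{A}\,\varphi_0(x)\,e^{-\zeta r},\qquad r\ge 1,
\ees
where $\varphi_0$ is the Harish-Chandra elementary spherical function, satisfying $\varphi_0(x)\asymp (1+r)^{B}e^{-|\rho|r}$. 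I would derive these by subordinating $G_{\alpha,\zeta}$ to the heat kernel $h_t$ via the formula
\bes
G_{\alpha,\zeta}(x)=\frac{1}{\Gamma(\alpha/2)}\int_0^\infty t^{\alpha/2-1}e^{-\zeta^2 t}\,h_t(x)\,dt
\ees
and plugging in the sharp two-sided heat kernel bounds of Anker--Ji; the local asymptotic then comes from $t\to 0$ (where $h_t$ reduces to its Euclidean model) and the global decay from $t\to\infty$ after optimising in $t$. This is the technical heart, and the step I expect to require the most care: on higher-rank $X$ the Plancherel density is not a single power but a product over positive roots, so the $\varphi_0$-factor and the polynomial corrections must be tracked carefully.

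Once these kernel bounds are in hand, the rest follows a by-now standard rearrangement-free scheme. Split $G_{\alpha,\zeta}=K_0+K_\infty$ with $K_0=G_{\alpha,\zeta}\mathbf{1}_{B(e,1)}$, and accordingly $u=u_0+u_\infty$ with $u_0=K_0*f$, $u_\infty=K_\infty*f$. For $u_\infty$, the chosen range $\zeta>2|\rho||1/2-1/p|$ is exactly what is needed to place $K_\infty$ in $L^{p'}(X)$, using the sharp Kunze--Stein $L^p$-convolution phenomenon on $X$ together with the bound $\varphi_0 e^{-\zeta r}\in L^q$ iff $\zeta>2|\rho|/q'$; this yields a uniform $L^\infty$ bound $\|u_\infty\|_\infty\le C(\zeta)$. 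For $u_0$, the main contribution is Euclidean Riesz-kernel convolution with $f$ on a unit ball, to which Adams' original argument in \cite{Ad} applies and produces the sharp exponent $\beta_0(n,\alpha)$; the remainder term $O(r^{\alpha-n+\epsilon})$ only contributes an $L^\infty$ perturbation. Combining these, $|u|^{p'}\le (1+\varepsilon)|u_0|^{p'}+C_\varepsilon$ for arbitrary $\varepsilon>0$, and the exponential integrability of $\exp(\beta_0 (1+\varepsilon)|u_0|^{p'})$ on any bounded set (Theorem \ref{thm0}) combined with the $L^p$ mass of $u$ on the tail region through $\Phi_p$'s Taylor expansion $\Phi_p(t)=\sum_{j\ge j_p-1}t^j/j!$ closes the estimate globally.

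Finally, sharpness of $\beta_0(n,\alpha)$ is proven by testing against the standard Moser concentrating sequence, pulled back by the exponential chart at a fixed base point: since the kernel $G_{\alpha,\zeta}$ matches the Euclidean Riesz kernel to leading order at the base point, the computation reduces to Adams' original sharpness argument on $\mathbb R^n$, and any $\beta>\beta_0(n,\alpha)$ blows up the integral along this sequence. The only new ingredient compared with \cite{LLY2} is step one, namely the kernel estimates on general $X$; everything afterwards is a mechanical adaptation.
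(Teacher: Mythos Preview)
Your proposal re-derives from scratch the Bessel--Green--Riesz kernel estimates that the paper already establishes in Section~3 for the proof of Theorem~\ref{thm0}; once Theorem~\ref{thm0} is in hand, none of that is needed again. The paper's actual proof of Theorem~\ref{thm1} is a short reduction to Theorem~\ref{thm0}: Anker's multiplier theorem (Corollary~\ref{cor-sobolev}) gives $\|u\|_p\le S_p$ whenever $\zeta>2|\rho|\,|1/2-1/p|$, hence $|\{|u|\ge 1\}|\le S_p$ is uniformly bounded; on this set one applies Theorem~\ref{thm0} directly to $u$, while on $\{|u|<1\}$ the Taylor expansion of $\Phi_p$ together with the $L^p$ bound on $u$ finishes the estimate. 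No kernel splitting and no $(1+\varepsilon)$ enter.

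Your route has a genuine gap at the combination step. From $u=u_0+u_\infty$ with $\|u_\infty\|_\infty\le M$ you write $|u|^{p'}\le(1+\varepsilon)|u_0|^{p'}+C_\varepsilon$ and then appeal to ``the exponential integrability of $\exp(\beta_0(1+\varepsilon)|u_0|^{p'})$ on any bounded set (Theorem~\ref{thm0})''. But Theorem~\ref{thm0} gives integrability only at the \emph{sharp} exponent $\beta_0$, and its sharpness clause says precisely that no uniform bound survives at $\beta_0(1+\varepsilon)$; an $L^\infty$ perturbation of $u_0$ costs a lower-order term $C|u_0|^{p'-1}$ in the exponent, which along a Moser concentrating sequence blows up. Moreover, Theorem~\ref{thm0} is stated for $u$ satisfying $\|(-\Delta-|\rho|^2+\zeta^2)^{\alpha/2}u\|_p\le 1$, not for $u_0=K_0*f$, so it is not directly applicable to $u_0$ either.

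There is a second gap in the tail control. Your splitting yields $u_\infty\in L^\infty$ (from $K_\infty\in L^{p'}$) and $u_0\in L^p$ (from $K_0\in L^1$), but this does \emph{not} give $u\in L^p$: $K_\infty\in L^1$ would require $\zeta>|\rho|$, which is not assumed. Without a uniform $L^p$ bound on $u$, the Taylor expansion of $\Phi_p$ on the region $\{|u|<1\}$ does not integrate over the infinite-volume set $X$. The paper obtains $\|u\|_p\le S_p$ in one line via Anker's multiplier theorem, and the hypothesis $\zeta>2|\rho|\,|1/2-1/p|$ in Theorem~\ref{thm1} is exactly the holomorphy-tube condition required there --- this is why the range of $\zeta$ in Theorem~\ref{thm1} is stricter than in Theorem~\ref{thm0}. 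Your Kunze--Stein argument, even if the $L^{p'}$ computation were made precise, does not supply this $L^p$ bound.
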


We notice that $2|\rho||1/2-1/p|< |\rho|$ provided $p>1$. Choosing $\zeta= |\rho|$ in Theorem \ref{thm1}, we have the following Adams inequality.
\begin{thm} \label{thm2}
Let $n\geq 3, 0< \alpha< n$ be an arbitrary positive number and $p= n/\alpha$. Then there exists $C=C(\zeta, n, \alpha)$ such that
\bes
\int_X \Phi_p\left(\beta_0(n, \alpha)|u(x)|^{p^\prime}\right)~dx\leq C,
\ees
for any $u\in W^{\alpha, p}(X)$ with $\int_X |(-\Delta)^{\alpha/2} u(x)|^p~dx\leq 1$.
\end{thm}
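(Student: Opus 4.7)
The plan is to deduce Theorem \ref{thm2} immediately from Theorem \ref{thm1} by choosing the parameter $\zeta$ so that the shifted operator reduces to the plain fractional Laplacian. The key observation is that $(-\Delta)^{\alpha/2}$ in the hypothesis of Theorem \ref{thm2} coincides with $(-\Delta-|\rho|^2+\zeta^2)^{\alpha/2}$ of Theorem \ref{thm1} in the special case $\zeta = |\rho|$, so once this choice is shown to lie in the admissible range the whole statement transfers verbatim.

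First I would check that $\zeta = |\rho|$ satisfies the constraint $\zeta > 2|\rho|\,|1/2-1/p|$ imposed by Theorem \ref{thm1}. Since $|1/2-1/p| < 1/2$ for every $p > 1$, we have $2|\rho|\,|1/2-1/p| < |\rho|$, so the choice $\zeta = |\rho|$ is strictly inside the admissible range (this is exactly the remark recorded just before the theorem statement).

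Next, with $\zeta = |\rho|$ the shift cancels:
\bes
-\Delta - |\rho|^2 + \zeta^2 \;=\; -\Delta.
\ees
Consequently, the hypothesis $\int_X |(-\Delta-|\rho|^2+\zeta^2)^{\alpha/2} u(x)|^p\, dx \leq 1$ of Theorem \ref{thm1} becomes $\int_X |(-\Delta)^{\alpha/2} u(x)|^p\, dx \leq 1$, which is precisely the hypothesis of Theorem \ref{thm2}. The conclusion $\int_X \Phi_p(\beta_0(n,\alpha)|u(x)|^{p'})\, dx \leq C$ is word-for-word identical in both theorems, so Theorem \ref{thm1} (specialized to $\zeta = |\rho|$) yields Theorem \ref{thm2}.

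There is no genuine obstacle at this stage: the entire difficulty has been absorbed into Theorem \ref{thm1}, whose proof requires the sharp Green function bounds for the shifted fractional Laplacian on $X$ together with a Lam--Lu style rearrangement-free argument. The only point worth flagging in the write-up is that the admissible range for $\zeta$ in Theorem \ref{thm1} was designed precisely so as to include the natural value $\zeta = |\rho|$ that eliminates the spectral shift; it is this flexibility that turns the Adams inequality for the shifted operator into an Adams inequality for $(-\Delta)^{\alpha/2}$ with the same sharp constant $\beta_0(n,\alpha)$.
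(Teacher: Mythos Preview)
Your proposal is correct and matches the paper's approach exactly: the paper states Theorem \ref{thm2} as an immediate consequence of Theorem \ref{thm1} by noting that $2|\rho|\,|1/2-1/p| < |\rho|$ for $p>1$ and then taking $\zeta = |\rho|$, which is precisely your argument.
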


In \cite{BP}, the author in collaboration with S. Pusti have established a fractional Poincar\'e-Sobolev inequality on $X$ (see Theorem \ref{thm-p-s}). Using this we prove the following result in the special case $p=2$. 
\begin{thm} \label{thm3}
Let $n\geq 3, \zeta>0$ and $s$ satisfies $0< 2s< \min\{l+2|\Sigma_0^+|, n\}$. Then there exists $C=C(\zeta, n)$  such that
\bes
\int_{X} \left[\exp\left(\beta_0({n,n/2})|u(x)|^2\right)-1- \beta_0(n, n/2)|u(x)|^2\right]~dx\leq C,
\ees
for any $u\in W^{n/2, 2}(X)$ with
\be \label{hypo-thm3}
\int_X \big|\left(-\Delta-|\rho|^2\right)^{s/2}\left(-\Delta-|\rho|^2+\zeta^2\right)^{(n-2s)/4} u(x)\big|^2~dx\leq 1.
\ee
Furthermore, this inequality is sharp in the sense stated above.
\end{thm}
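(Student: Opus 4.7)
\medskip

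\textbf{Proof plan.} Set $T := -\Delta - |\rho|^2$, so hypothesis \eqref{hypo-thm3} reads $\|T^{s/2}(T+\zeta^2)^{(n-2s)/4}u\|_2 \leq 1$. The approach I would take is to Taylor-expand the integrand and bound the resulting $L^{2k}$-norms of $u$ by chaining the fractional Poincar\'e--Sobolev inequality of \cite{BP} with $L^p$-mapping properties of the Bessel operator. Writing
\[
\exp\bigl(\beta_0(n,n/2)\,u^2\bigr) - 1 - \beta_0(n,n/2)\,u^2 \;=\; \sum_{k \geq 2}\frac{\beta_0(n,n/2)^{k}}{k!}\,u^{2k},
\]
the conclusion reduces to estimating $\sum_{k\geq 2}\beta_0^k\|u\|_{2k}^{2k}/k!$. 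Note that the subtraction $-1-\beta_0 u^2$ in the integrand is essential, since there is no a priori control of $\|u\|_2$ or $\|u\|_\infty$ from \eqref{hypo-thm3}: on the Fourier side the ratio $(|\lambda|^2+\zeta^2)^{n/2} / [|\lambda|^{2s}(|\lambda|^2+\zeta^2)^{(n-2s)/2}]$ blows up at $\lambda = 0$, so Theorem~\ref{thm1} cannot be invoked directly.

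To bound the $L^{2k}$-norms, set $v := (T+\zeta^2)^{(n-2s)/4}u$, so $\|T^{s/2}v\|_2 \leq 1$. The assumption $0<2s<\min\{l+2|\Sigma_0^+|,\,n\}$ is precisely the integrability threshold for $|\lambda|^{-2s}$ against the Plancherel density $|c(\lambda)|^{-2}d\lambda$ near the origin, and ensures the applicability of the fractional Poincar\'e--Sobolev inequality \cite{BP}, yielding
\[
\|v\|_{q_0} \;\leq\; C\,\|T^{s/2}v\|_2 \;\leq\; C,\qquad q_0 \;=\; \frac{2n}{n-2s}.
\]
Next I would use the subordination identity
\[
(T+\zeta^2)^{-(n-2s)/4} \;=\; \frac{1}{\Gamma((n-2s)/4)}\int_0^\infty t^{(n-2s)/4-1}e^{-\zeta^2 t}\,e^{-tT}\,dt,
\]
which represents $(T+\zeta^2)^{-(n-2s)/4}$ as convolution with a nonnegative kernel constructed from the shifted heat kernel; for $\zeta$ sufficiently large relative to the $L^p$-spectrum of $T$, this kernel lies in every $L^r(X)$, $1 \le r \le \infty$. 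Writing $u = (T+\zeta^2)^{-(n-2s)/4}v$ and applying Young's inequality on the unimodular group underlying $X$, I expect to obtain $\|u\|_{2k}\leq C_k$ with $C_k = O(\sqrt{k})$ as $k\to\infty$, by interpolating between the Sobolev endpoint and the Bessel smoothing. Plugging this into the Taylor series and applying Stirling's formula should produce a convergent geometric-type sum, provided the constants $C_k$ are matched against $\beta_0(n,n/2)$.

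For sharpness, I would adapt the Moser--Adams test sequence to $X$: using normal coordinates around a basepoint $o\in X$ and the local equivalence of $T^{s/2}(T+\zeta^2)^{(n-2s)/4}$ with the Euclidean operator $(-\Delta_{\R^n})^{n/4}$ near $o$, the Euclidean extremizing sequence can be transplanted to $X$ via a cutoff, and standard asymptotics show that the integral diverges if $\beta>\beta_0(n,n/2)$.

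The main obstacle, as usual in borderline Sobolev inequalities, is the precise growth rate of the constants $C_k$: only the sharp Poincar\'e--Sobolev constant combined with exact Young-type constants for the Bessel kernel will guarantee that $\sum_k \beta_0^k C_k^{2k}/k!$ converges without losing a factor. This is where the explicit Plancherel formula and Kunze--Stein phenomenon on noncompact symmetric spaces play a decisive role and distinguish $X$ from a general Cartan--Hadamard manifold.
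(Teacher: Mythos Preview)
Your proposal has a genuine gap at its core: the Taylor-expansion-and-Sobolev-embedding strategy you outline is essentially Trudinger's original argument, which yields exponential integrability but \emph{not} with the sharp constant $\beta_0(n,n/2)$. You acknowledge this yourself in the final paragraph, but the remedy you gesture at---invoking Plancherel and Kunze--Stein---does not close the gap. The difficulty is concrete: after obtaining $\|v\|_{q_0}\leq C$ with $q_0=2n/(n-2s)$ from the Poincar\'e--Sobolev inequality (whose constant $C$ is not sharp), you propose to reach $\|u\|_{2k}$ via Young's inequality applied to $u=(T+\zeta^2)^{-(n-2s)/4}v$. This requires the $L^r$-norms of the Bessel kernel $k_{\zeta,(n-2s)/2}$ for $r=r(k)\to\infty$, and there is no reason these norms, multiplied against the Poincar\'e--Sobolev constant, conspire to give exactly $\|u\|_{2k}^{2k}\lesssim k!/\beta_0(n,n/2)^k$, which is what convergence at the threshold $\beta_0$ demands. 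In $\R^n$ the sharp asymptotics of such embedding constants are equivalent to (and typically derived from) the Adams inequality itself, so the reasoning becomes circular; on $X$ the situation is no better.

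The paper avoids this circularity by a level-set decomposition. Writing $\Omega(u)=\{|u|\geq 1\}$, a single $L^{q_0}$-bound (any $2<q_0\leq\min\{2n/(n-2s),4\}$) from the Poincar\'e--Sobolev inequality suffices both to bound $|\Omega(u)|$ uniformly and to control the Taylor tail on $X\setminus\Omega(u)$ via $|u|^{2k}\leq|u|^{q_0}$ for $k\geq 2$. On $\Omega(u)$, which has finite measure independent of $u$, the paper writes $u$ as the convolution of $v:=T^{s/2}(T+\zeta^2)^{(n-2s)/4}u$ with the composite kernel $k_s\ast k_{\zeta,(n-2s)/2}$ and runs the Adams machinery directly: O'Neil's rearrangement lemma together with the optimal asymptotic $[k_s\ast k_{\zeta,(n-2s)/2}]^\ast(t)\leq \gamma(n/2)^{-1}(nt/\omega_{n-1})^{-1/2}+O(t^{-1/2+\epsilon})$ as $t\to 0$ and the $L^2$-integrability of this rearrangement at infinity. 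The sharp constant $\beta_0(n,n/2)=n\,\omega_{n-1}^{-1}\gamma(n/2)^2$ arises precisely from the coefficient $\gamma(n/2)^{-1}$ in that kernel asymptotic, which is why Sections~3--4 of the paper are devoted to it. Your route bypasses these kernel estimates entirely, and with them the mechanism that produces the sharp constant.
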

\begin{rem}
The number $\nu=l+2|\Sigma_0^+|$ is called `pseudo-dimension' (see (\ref{defn-pseu}) for the definition). In the case of rank one symmetric spaces, in particular, for real hyperbolic space $\mathbb B^n$ of dimension $n$, the pseudo-dimension $\nu=3$. 
\end{rem}
Theorem \ref{thm3} implies the following Hardy-Adams inequality.
\begin{thm} \label{thm-HA}
Let $n\geq 3, \zeta>0$ and $s$ satisfies $2\leq 2s< \min\{l+2|\Sigma_0^+|, n\}$. Then there exists $C=C(\zeta, n)$  such that
\bes
\int_{X} \left[\exp\left(\beta_0({n,n/2})|u(x)|^2\right)-1- \beta_0(n, n/2)|u(x)|^2\right]~dx\leq C,
\ees
for any $u\in W^{n/2, 2}(X)$ with
\bes
\int_X \big|(-\Delta)^{s/2}(-\Delta-|\rho|^2+\zeta^2)^{(n-2s)/4} u(x)\big|^2~dx- |\rho|^{2s}\zeta^{n-2s}\int_{X}|u(x)|^2~dx\leq 1.
\ees
\end{thm}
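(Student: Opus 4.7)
The plan is to deduce Theorem \ref{thm-HA} directly from Theorem \ref{thm3} by comparing the two hypotheses. The entire content is the $L^2$-norm inequality
\[
\int_X \bigl|(-\Delta-|\rho|^2)^{s/2}(-\Delta-|\rho|^2+\zeta^2)^{(n-2s)/4} u(x)\bigr|^2\,dx \leq \int_X \bigl|(-\Delta)^{s/2}(-\Delta-|\rho|^2+\zeta^2)^{(n-2s)/4} u(x)\bigr|^2\,dx - |\rho|^{2s}\zeta^{n-2s}\int_X |u(x)|^2\,dx,
\]
which should hold for every $u\in W^{n/2,2}(X)$. Once it is in hand, the hypothesis of Theorem \ref{thm-HA} forces the left-hand side above to be $\leq 1$, and this is exactly the hypothesis (\ref{hypo-thm3}) of Theorem \ref{thm3}, so the exponential estimate follows with the same constant $\beta_0(n,n/2)$.

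To establish the norm comparison, I would exploit that all three operators appearing are Borel functions of the single self-adjoint operator $-\Delta$ on $L^2(X)$, whose spectrum is the half-line $[|\rho|^2,\infty)$. Passing to the spectral representation furnished by the Helgason-Fourier transform together with the Plancherel theorem on $X$, the operator $-\Delta$ becomes multiplication by $|\lambda|^2+|\rho|^2$ with $\lambda\in\mathfrak{a}^*$, so that $(-\Delta-|\rho|^2)^{s/2}$, $(-\Delta-|\rho|^2+\zeta^2)^{(n-2s)/4}$, and $(-\Delta)^{s/2}$ act as multiplication by $|\lambda|^s$, $(|\lambda|^2+\zeta^2)^{(n-2s)/4}$, and $(|\lambda|^2+|\rho|^2)^{s/2}$, respectively. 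By Plancherel, the norm inequality then reduces to the single scalar pointwise estimate
\[
\bigl[(|\lambda|^2+|\rho|^2)^s-|\lambda|^{2s}\bigr]\,(|\lambda|^2+\zeta^2)^{(n-2s)/2} \geq |\rho|^{2s}\zeta^{n-2s}, \qquad \lambda\in\mathfrak{a}^*.
\]

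The hypothesis $2\leq 2s<n$ is precisely what makes this scalar inequality elementary: since $s\geq 1$, the super-additivity $(a+b)^s\geq a^s+b^s$ on $[0,\infty)$ yields $(|\lambda|^2+|\rho|^2)^s-|\lambda|^{2s}\geq |\rho|^{2s}$, and since $n-2s>0$ one has $(|\lambda|^2+\zeta^2)^{(n-2s)/2}\geq \zeta^{n-2s}$; multiplying the two lower bounds delivers the required estimate (with equality at $\lambda=0$, showing the constant $|\rho|^{2s}\zeta^{n-2s}$ is in fact optimal for the comparison). I do not anticipate any serious analytic obstacle: the only ingredient beyond one-line algebra is the spectral-calculus passage through Plancherel, which is standard once one recognizes that every operator in sight commutes with $-\Delta$. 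The remaining constraint $2s<l+2|\Sigma_0^+|$ plays no role in this reduction and is inherited from the hypothesis of Theorem \ref{thm3}.
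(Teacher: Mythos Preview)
Your proposal is correct and follows essentially the same route as the paper: reduce to Theorem \ref{thm3} via Plancherel, which turns the norm comparison into the pointwise scalar inequality $[(|\lambda|^2+|\rho|^2)^s-|\lambda|^{2s}](|\lambda|^2+\zeta^2)^{(n-2s)/2}\geq |\rho|^{2s}\zeta^{n-2s}$. In fact you are slightly more explicit than the paper in isolating why the condition $s\geq 1$ is needed (super-additivity of $t\mapsto t^s$), whereas the paper simply asserts the inequality without comment.
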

\begin{rem}
In contrast with Theorem \ref{thm-LLY2} on the real hyperbolic spaces, our result improves the range of $s$ with $1\leq s< 3/2$ for all dimension $n$. 
\end{rem}
The article is organized as follows. In section 2, we review some preliminaries of Riemannian symmetric spaces and Fourier analysis on them. Using Anker's multiplier theorem on $X$, we derive a Sobolev embedding theorem on fractional Sobolev spaces (Corollary \ref{cor-sobolev}). Section 3 focuses on the optimal Bessel-Green-Riesz kernel estimates near the origin for the fractional operators. We also need to establish sharp estimates for the convolution of the fractional kernels. Section 4 devotes to the preparation of the proof of the important local Adams inequality (Theorem \ref{thm0}). In section 5, we prove all the results using Fourier analysis on $X$.

\section{Riemannian symmetric spaces of noncompact type}
In this section, we describe the necessary preliminaries regarding semisimple Lie groups and harmonic analysis on Riemannian symmetric spaces. These are standard and can be found, for example, in \cite{GV, H, H1, H2}. To make the article self-contained, we shall gather only those results which will be used throughout this paper. 

\subsection{Notations} Let $G$ be a connected, noncompact, real semisimple Lie group with finite center and $\mathfrak g$ its Lie algebra. We fix a Cartan involution $\theta$ of $\mathfrak g$ and write $\mathfrak g = \mathfrak k \oplus \mathfrak p$ where $\mathfrak k$ and $\mathfrak p$ are $+1$ and $-1$ eigenspaces of $\theta$ respectively. Then $\mathfrak k$ is a maximal compact subalgebra of $\mathfrak g$ and $\mathfrak p$ is a linear subspace of $\mathfrak g$. The Cartan involution $\theta$ induces an automorphism $\Theta$ of the group $G$ and $K=\{g\in G\mid \Theta (g)=g\}$ is a maximal compact subgroup of $G$.  Let $B$ denote the Cartan Killing form of $\mathfrak g$. It is known that $B\mid_{\mathfrak p\times\mathfrak p}$ is positive definite and hence induces an inner product and a norm $| \cdot |$ on $\mathfrak p$. The homogeneous space
X = G/K is a Riemannian symmetric space of noncompact type. The tangent space of $X$ at the point $o=eK$ can be naturally identified to $\mathfrak p$ and the restriction of $B$ on $\mathfrak p$ then induces a $G$-invariant Riemannian metric ${\bf d}$ on $X$. For $x\in X$, we denote $|x|$ is the distance of $x$ from the origin $o$. For a given $x\in X$ and a positive number $r$ we define
\bes
{\bf B}(x, r)=\{y\in X : \:\:{{\bf d}}(x, y)<r\},
\ees
to be the open ball with center $x$ and radius $r$.

We fix a maximal abelian subspace $\mathfrak a$ in $\mathfrak p$. The rank of $X$ is the dimension $l$ of $\mathfrak a$.
We shall identify $\mathfrak a$ endowed with the inner product induced from $\mathfrak p$ with $\mathbb{R}^l$  and let $\mathfrak{a}^*$ be the real dual of $\mathfrak{a}$. The set of restricted roots of the pair $(\g, \mathfrak{a})$ is denoted by $\Sigma$.  It consists of all $\alpha \in \mathfrak{a}^*$ such that
\bes
\g_\alpha = \left\{X\in \g ~|~ [Y, X] = \alpha(Y) X, \:\: \txt{ for all } Y\in \mathfrak{a} \right\},
\ees
is non-zero with $m_\alpha = \dim(\g_\alpha)$. We choose a system of positive roots $\Sigma_+$ and with respect to $\Sigma_+$, the positive Weyl chamber
$\mathfrak{a}_+ = \left\{X\in \mathfrak{a} ~|~ \alpha(X)>0,\:\:  \txt{ for all } \alpha \in \Sigma_+\right\}$. We also let $\Sigma_0^+$ be the set of positive indivisible roots, that is, $\Sigma_0^+=\{\alpha\in \Sigma^+\mid 2\alpha\not\in \Sigma\}$. Let $n$ be the dimension of $X$ and $\nu$ be the pseudo-dimension:  
\be \label{defn-pseu}
n=l+ \sum_{\alpha \in \Sigma_+}m_\alpha,\:\: \textit{ and } \nu=l+ 2|\Sigma_0^+|.
\ee
We notice that one cannot
compare $n$ and $\nu$ without specifying the geometric structure of $X$. For example, when $G$ is complex, we have $n=\nu$; but when $X$ has normal real form, we have $n = l + |\Sigma_0^+|$ which is strictly smaller than $\nu$. Let $\rho \in \mathfrak a^\ast$ denote the half-sum of all positive roots counted with their multiplicities
\be \label{defn-rho}
\rho=\frac{1}{2}\sum_{\alpha\in \Sigma_+}m_{\alpha}\alpha.
\ee 
It is known that the $L^2$-spectrum
of the Laplace-Beltrami operator $\Delta$ on $X$ is the half-line $(-\infty, -|\rho|^2]$.
Let $\mathfrak n$ be the nilpotent Lie subalgebra of $\mathfrak g$ associated to $\Sigma_+$, that is, $\mathfrak{n}= \oplus_{\alpha \in \Sigma_+}  ~ \mathfrak{g}_{\alpha}$. If $N=\exp \mathfrak{n}$ and $A= \exp \mathfrak{a}$ then $N$ is a nilpotent Lie subgroup and $A$ normalizes $N$. For the group $G$, we now have the Iwasawa decomposition 
$G= KAN$, that is, every $g\in G$ can be uniquely written as 
\bes
g=\kappa(g)\exp H(g)\eta(g), \:\:\:\: \kappa(g)\in K, H(g)\in \mathfrak{a}, \eta(g)\in N,
\ees 
and the map $(k, a, n) \mapsto kan$ 
is a global diffeomorphism of $K\times A \times N$ onto $G$.
Let $M'$ and $M$ be the normalizer and centralizer of $\mathfrak{a}$ in $K$ respectively.
Then $M$ is a normal subgroup of $M'$ and normalizes $N$. The quotient $W = M'/M$ is a finite group, called the Weyl group of the pair $(\g, \mathfrak{k})$. $W$ acts on $\mathfrak{a}$ by the adjoint action. It is known that $W$ acts as a group of orthogonal transformations (preserving the Cartan-Killing form) on $\mathfrak{a}$. Each $w\in W$ permutes the Weyl chambers and the action of $W$ on the Weyl chambers is simply transitive. Let $A_+= \exp{\mathfrak{a_+}}$. Since $\exp: \mathfrak{a} \to A$ is an isomorphism we can identify $A$ with $\R^l$. Let $\overline{A_+}$ denote the closure of $A_+$ in $G$. One has the polar decomposition $G=K A K$,
that is, each $g\in G$ can be written as 
\bes
g=k_1 (\exp Y) k_2, \:\:  k_1, k_2 \in K, Y\in \mathfrak{a}.
\ees 
In the above decomposition, the $A$ component of $\mathfrak{g}$ is uniquely determined modulo $W$. In particular, it is well defined in $\overline{A_+}$. 
The map $(k_1, a, k_2)\mapsto k_1ak_2$ of $K\times A\times K$ into $G$ induces a diffeomorphism of $K/M\times A_+\times K$ onto an open dense subset of $G$. 
It follows that if $gK=k_1 (\exp Y)K\in X$ then
\bes
|gK|={\bf d}(o, gK)=|Y|.
\ees
We recall the following property of the Iwasawa projection map $H$ \cite[Lemma 1.14, p.217]{H1}:
\be \label{Iwasawa}
|H(\exp Y k)|\leq |Y|, \:\:\:\: \textit{ for } Y\in \mathfrak a, k\in K.
\ee
We extend the inner product on $\mathfrak{a}$ induced by $B$ to $\mathfrak{a}^*$ by duality, that is, set
\bes
\langle \la, \mu \rangle =B(Y_\la, Y_\mu), \:\:\:\: \la, \mu \in \mathfrak{a}^*,  ~ Y_\la, Y_\mu \in \mathfrak{a},
\ees
where $Y_\la$ is the unique element in $\mathfrak{a}$ such that 
\bes
\la(Y) = B(Y_\la, Y), \:\:\:\: \txt{ for all } Y\in \mathfrak{a}.
\ees
This inner product induces a norm, again denoted by $|\cdot|$, on $\mathfrak{a}^*$,
\bes
|\la| = \langle \la, \la \rangle^{\frac{1}{2}}, \:\:\:\: \la \in \mathfrak{a}^*.
\ees
The elements of the Weyl group $W$ act on $\mathfrak a^*$ by the formula
\bes
sY_{\la}=Y_{s\la},\:\:\:\:\:\:s\in W,\:\la\in\mathfrak a^*.
\ees
Let $\mathfrak{a}_\C^*$ denote the complexification of $\mathfrak{a}^*$, that is, the set of all complex-valued real linear functionals on $\mathfrak{a}$. The inner products have complex bilinear extensions to the complexifications $\mathfrak a_\C$ and
$\mathfrak a_\C^\ast$. All these bilinear forms are denoted by the same symbol $\langle \cdot, \cdot \rangle$.

Through the identification of $A$ with $\R^l$, we use the Lebesgue measure on $\R^l$ as the Haar measure $da$ on $A$. As usual on the compact group $K$, we fix the normalized Haar measure $dk$ and $dn$ denotes a Haar measure on $N$. The following integral formulae describe the Haar measure of $G$ corresponding to the Iwasawa and polar decompositions respectively.
\beas
\int_{G}{f(g)dg} &=& \int_K \int_{\mathfrak{a}}\int_N f(k\exp Y n)~e^{2\rho(Y)}\:dn\:dY\:dk,\:\:\:\:\:\: f\in C_c(G); \\ 
&=&\int_{K}{\int_{\overline{A_+}}{\int_{K}{f(k_1ak_2) ~ J(a)\:dk_1\:da\:dk_2}}},
\eeas
where $dY$ is the Lebesgue measure on $\R^l$ and for $Y\in \overline{\mathfrak{a}_+}$
\be \label{J-est}
J(\exp Y)= c \prod_{\alpha\in \Sigma^+}\left(\sinh\alpha(Y)\right)^{m_{\alpha}}  \asymp \left\{\prod_{\alpha\in \Sigma^+}\left(\frac{\alpha(Y)}{1 + \alpha(Y)}\right)^{m_\alpha} \right\} e^{2\rho(Y)},
\ee
where $c$ (in the equality above) is a normalizing constant. 
If $f$ is a function on $X= G/K$ then $f$ can be thought of as a function on $G$ which is right invariant under the action of $K$. It follows that on $X$ we have a $G$ invariant measure $dx$ such that 
\be \label{int-formula}
\int_X f(x)~dx= \int_{K/M}\int_{\mathfrak{a}_+}f(k\exp Y)~J(\exp Y)~dY~dk_M,
\ee
where $dk_M$ is the $K$-invariant measure on $K/M$.  

\subsection{Fourier analysis on $X$}
For a sufficiently nice function $f$ on $X$, its Fourier transform $\widetilde{f}$ is defined on $\mathfrak{a}_{\C}^* \times K$ by the formula 
\be \label{hftdefn}
\widetilde{f}(\la,k) = \int_{G} f(g) e^{(i\la - \rho)H(g^{-1}k)} dg,\:\:\:\:\:\: \la \in \mathfrak{a}_{\C}^*,\:\: k \in K, 
\ee
whenever the integral exists \cite[P. 199]{H1}. 
As $M$ normalizes $N$ the function $k\mapsto\widetilde{f}(\la, k)$ is right $M$-invariant.
It is known that if $f\in L^1(X)$ then $\widetilde{f}(\la, k)$ is a continuous function of $\la \in \mathfrak{a}^*$, for almost every $k\in K$ (in fact, holomorphic in $\la$ on a domain containing $\frak a^*$). If in addition, $\widetilde{f}\in L^1(\mathfrak{a}^*\times K, |{\bf c}(\la)|^{-2}~d\la~dk)$ then the following Fourier inversion holds,
\be\label{hft}
f(gK)= |W|^{-1}\int_{\mathfrak{a}^*\times K}\widetilde{f}(\la, k)~e^{-(i\la+\rho)H(g^{-1}k)} ~ |{\bf c}(\la)|^{-2}d\la~dk,
\ee
for almost every $gK\in X$ \cite[Chapter III, Theorem 1.8, Theorem 1.9]{H1}. Here ${\bf c}(\la)$ denotes Harish Chandra's $c$-function. Moreover, $f \mapsto \widetilde{f}$ extends to an isometry of $L^2(X)$ onto $L^2(\mathfrak{a}^*_+\times K, |{\bf c}(\la)|^{-2}~d\la~dk )$ \cite[Chapter III, Theorem 1.5]{H1},
that is, 
\be \label{plancherel}
\int_X|f(x)|^2 dx = c_G \int_{\mathfrak{a}^*_+\times K} |\tilde{f}(\lambda, k)|^2~ |{\bf c}(\lambda)|^{-2}~d\lambda~dk,
\ee 
where $c_G$ is a positive number that depends only on $G$.

We now specialize to the case of $K$-biinvariant functions $f$, that is, $f$ satisfies $f(k_1 gk_2)= f(g)$, for all $k_1, k_2\in K$ and $g\in G$. Using the polar decomposition of $G$ we may view an integrable or a continuous $K$-biinvariant function $f$ on $G$ as a function on $A_+$, or by using the inverse exponential map we may also view $f$ as a function on $\mathfrak{a}$ solely determined by its values on $\mathfrak{a}_+$. Henceforth, we shall denote the set of $K$-biinvariant functions in $L^p(G)$ by $L^p(G//K)$, for $1\leq p\leq \infty$; and $K$-biinvariant compactly supported smooth functions by $C_c^\infty(G//K)$.
If $f\in L^1(G//K)$ then the Fourier transform $\widetilde{f}$ can also be written as
\be \label{hlsphreln}
\widetilde f(\la, k) =\widehat{f}(\la )= \int_Gf(g)\phi_{-\la}(g)~dg,
\ee
where $\phi_\lambda$ is Harish Chandra's elementary spherical function defined by
\be \label{philambda} 
\phi_\la(g) 
= \int_K e^{-(i\la+ \rho) \big(H(g^{-1}k)\big)}~dk,\:\:\:\:\:\:\la \in \mathfrak{a}_\C^*.  
\ee
We now list down some well-known properties of the elementary spherical functions which are important for us (\cite[Prop 3.1.4]{GV}, \cite[Prop. 2.2.12]{AJ}, \cite[Thm 1.1, p. 200; Lemma 1.18, p. 221]{H1}).
\begin{thm} \label{philambdathm}
\begin{enumerate}
\item[(1)] $\phi_\la(g)$ is $K$-biinvariant in $g\in G$ and $W$-invariant in $\la\in \mathfrak{a}_\C^*$.
\item[(2)] $\phi_\la(g)$ is $C^\infty$ in $g\in G$ and holomorphic in $\la\in \mathfrak{a}_\C^*$.
\item[(3)] For all $\la\in \overline{\mathfrak{a}_+^*}$ and $g\in G$ we have $|\phi_\la(g)| \leq  \phi_0(g)\leq 1$.
\item[(4)] The elementary spherical function $\phi_0$ satisfies the following estimate:
\be \label{est-phi0}
 \phi_0(\exp Y)\asymp \left\{\prod_{\alpha\in \Sigma_0^+} \left(1 + \alpha(Y)\right) \right\}  e^{-\rho(Y)}, \:\:\text{ for all } Y\in \overline{\mathfrak a^+}.
\ee
\item[(5)]For $\la\in \mathfrak{a}^*$, there holds $\Delta \phi_{\la}=-(|\la|^2+|\rho|^2)\phi_{\la}$.

\item[6)] For $\la\in \mathfrak a^*$, the function $\phi_\la$ satisfies the following
\be \label{phi-xy}
\phi_{-\la}(hg) = \int_K e^{(i\la - \rho)\big(H(g^{-1}k) \big)}e^{-(i\la+\rho)\big(H(hk) \big)}~dk, \:\:\:\: g, h \in G.
\ee		
\end{enumerate}
\end{thm}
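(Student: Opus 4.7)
Each of the six assertions is classical, so my strategy is to indicate the central ingredient for each, all of which reduce to the defining integral (\ref{philambda}) and basic properties of the Iwasawa projection $H$, Harish-Chandra's series expansion, or the spherical principal series. For (1), right $K$-invariance of $\phi_\lambda$ in $g$ follows from substituting $g \mapsto g k_0$ in (\ref{philambda}) and translating the $K$-Haar measure by $k_0$; left $K$-invariance is analogous. The $W$-invariance in $\lambda$ is cleanest on the equivalent integral representation over $K/M$, using that $W = M'/M$ acts by orthogonal transformations on $\mathfrak{a}^*$ and that the Iwasawa $A$-component of an element of $G$ is well-defined only modulo $W$. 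Item (2) is differentiation under the integral sign, legitimised by the compactness of $K$ and real-analyticity of $H$ in $g$, together with entirety of $\lambda \mapsto e^{-(i\lambda+\rho)H(g^{-1}k)}$ for each fixed $g, k$.

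For (3), the triangle inequality applied to (\ref{philambda}) with $\lambda \in \overline{\mathfrak{a}_+^*}$ immediately yields $|\phi_\lambda(g)| \leq \phi_0(g)$; the bound $\phi_0(g) \leq \phi_0(e) = 1$ is the positive-definiteness of $\phi_0$, which is realised as the diagonal matrix coefficient of the normalised $K$-fixed vector in the unitary spherical principal series at parameter zero. Item (4) is Anker's sharp refinement of Harish-Chandra's series expansion for $\phi_0$, which I would simply quote from \cite[Prop.~2.2.12]{AJ}. For (5), I would apply $\Delta$ under the integral in (\ref{philambda}): the function $g \mapsto e^{-(i\lambda+\rho)H(g^{-1}k)}$ is a joint eigenfunction of every left-invariant differential operator on $G$ coming from an $\mathrm{Ad}(K)$-invariant polynomial, and the Laplace-Beltrami eigenvalue on the spherical principal series at parameter $\lambda$ is the standard scalar $-(|\lambda|^2 + |\rho|^2)$.

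The only item that genuinely requires some bookkeeping is (6), so I would handle it via the realisation of $\phi_\lambda$ as the matrix coefficient $\phi_\lambda(g) = \langle \pi_\lambda(g)\mathbf{1}, \mathbf{1}\rangle$ of the spherical principal series on $L^2(K/M)$, where $\pi_\lambda(g)\mathbf{1}(k) = e^{-(i\lambda+\rho)H(g^{-1}k)}$. Since $\pi_\lambda$ is unitary for real $\lambda$ one has $\pi_\lambda(h)^* = \pi_\lambda(h^{-1})$, and hence
\[
\phi_\lambda(hg) = \langle \pi_\lambda(g)\mathbf{1}, \pi_\lambda(h^{-1})\mathbf{1}\rangle = \int_K e^{-(i\lambda+\rho)H(g^{-1}k)}\,\overline{e^{-(i\lambda+\rho)H(hk)}}\,dk,
\]
which with the substitution $\lambda \mapsto -\lambda$ is precisely (\ref{phi-xy}). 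If one prefers to avoid the principal-series language, the same identity can be obtained by applying the cocycle identity for $H$ to $H(g^{-1}h^{-1}k)$ followed by a careful change of variable $k \mapsto \kappa(hk')$ on $K$, whose Jacobian $e^{-2\rho H(hk')}$ combines with the $h$-dependent exponential; this is the main subtlety, while all remaining items are immediate or are documented in the cited references \cite{GV, H1, AJ}.
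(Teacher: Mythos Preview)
Your sketches are correct and constitute exactly the standard arguments found in the references the paper cites. Note, however, that the paper does not give its own proof of this theorem at all: it simply lists these six properties with pointers to \cite[Prop.~3.1.4]{GV}, \cite[Prop.~2.2.12]{AJ}, and \cite[Thm~1.1, p.~200; Lemma~1.18, p.~221]{H1}, and then uses them as black boxes. So rather than taking a different route, you have in effect supplied the content behind those citations; in particular your matrix-coefficient derivation of (6) is precisely Helgason's Lemma~1.18 argument, and the alternative cocycle-plus-Jacobian computation you mention is the other standard path to the same identity.
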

We now recall the following asymptotic estimates of the heat kernel $h_t(x)$ on $X$ established by Anker and Ji \cite[Theorem 3.7]{AJ}.
\begin{thm}\label{est-heatkernel-bothside}
Let $\kappa$ be an arbitrary positive number. Then there exist positive constants $C_1, C_2$ $($depending on $\kappa$) such that 
\bes
C_1\leq \frac{h_t(\exp Y)}{t^{-\frac{n}{2}}(1+t)^{\frac{n-l}{2}-|\Sigma_0^{+}|} \left\{\prod_{\alpha\in \Sigma_0^{+}}(1+\alpha(Y)\right\}e^{-|\rho|^2t-\rho(Y)- \frac{|Y|^2}{4t}}} \leq C_2,
\ees
for all $t>0$, and $Y\in \overline{\mathfrak a^+}$, with $|Y|\leq \kappa (1+t)$.
\end{thm}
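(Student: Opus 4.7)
The plan is to derive the two-sided estimate from the spherical Fourier inversion formula. Since $h_t$ is $K$-biinvariant and by Theorem \ref{philambdathm}(5) its spherical transform equals $e^{-t(|\la|^2+|\rho|^2)}$, the inversion formula (\ref{hft}) combined with (\ref{hlsphreln}) yields
\bes
h_t(\exp Y) \;=\; c_G \int_{\mathfrak a^\ast} e^{-t(|\la|^2+|\rho|^2)}\,\phi_\la(\exp Y)\,|\mathbf{c}(\la)|^{-2}\, d\la.
\ees
I would extract the asymptotics by a Laplace/stationary-phase analysis of this integral, split into the small-time range $0<t\leq 1$ and the long-time range $t\geq 1$, which matches the two behaviours of the $(1+t)$-prefactor in the claim.

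In the small-time range, the constraint $|Y|\leq \kappa(1+t)$ keeps $\exp Y$ in a fixed compact neighbourhood of the origin. Here I would invoke a standard parametrix construction for $\Delta$ in geodesic normal coordinates to compare $h_t$ with the Euclidean heat kernel on $T_oX \cong \mathfrak p$, giving $h_t(\exp Y) \asymp (4\pi t)^{-n/2} e^{-|Y|^2/(4t)}$ with curvature corrections of order $O(t)$ absorbed into constants. In this regime the claimed prefactor reduces to $t^{-n/2}\phi_0(\exp Y) e^{-|\rho|^2 t - |Y|^2/(4t)}$ up to multiplicative constants, and since $\phi_0$ is bounded above and below on compact sets, the estimates (\ref{est-phi0}) and (\ref{J-est}) together match the statement.

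For $t\geq 1$, I would perform a Laplace expansion around the saddle $\la_\ast = iY/(2t)$, at which the analytically continued phase $-t|\la|^2 + i\la(Y)$ is critical. Shifting the contour to $\la_\ast + \mathfrak a^\ast$ produces the Gaussian factor $e^{-|Y|^2/(4t)}$ and leaves a Gaussian-weighted integral whose amplitude is $\phi_{\la_\ast+\mu}(\exp Y)\,|\mathbf c(\la_\ast+\mu)|^{-2}$ for $\mu \in \mathfrak a^\ast$. Using Anker's uniform estimates for $\phi_\la$ on shifted contours one reduces to bounds on $\phi_0(\exp Y)\int_{\mathfrak a^\ast} e^{-t|\mu|^2}|\mathbf c(\la_\ast+\mu)|^{-2}d\mu$, and the known Plancherel asymptotics
\bes
|\mathbf c(\la)|^{-2} \asymp \prod_{\alpha\in\Sigma_0^+}|\langle\la,\alpha\rangle|^2\bigl(1+|\langle\la,\alpha\rangle|\bigr)^{m_\alpha+m_{2\alpha}-2}
\ees
then yield the exponent $(1+t)^{(n-l)/2-|\Sigma_0^+|}$ after balancing the $l$-dimensional Gaussian volume $t^{-l/2}$ against the vanishing/growth of $|\mathbf c|^{-2}$ at the saddle. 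The geometric factor $\prod_{\alpha\in\Sigma_0^+}(1+\alpha(Y))e^{-\rho(Y)}$ is supplied by $\phi_0(\exp Y)$ via (\ref{est-phi0}).

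The main obstacle is the intermediate zone $|Y|\asymp \sqrt t$, where the saddle $\la_\ast$ depends nontrivially on both $Y$ and $t$ and neither the Euclidean comparison nor a clean Taylor expansion at $\la=0$ applies. A dyadic decomposition of the $\la$-integral, with quadratic Gaussian bounds on the near-saddle piece and repeated integration by parts against the oscillatory phase on the far piece, is required; establishing the sharp \emph{lower} bound, so as to rule out hidden cancellations in the oscillatory part of $\phi_\la$, is the core technical difficulty. The hypothesis $|Y|\leq \kappa(1+t)$ enters precisely here, keeping the saddle inside the domain of validity of these expansions.
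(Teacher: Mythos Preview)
The paper does not prove Theorem~\ref{est-heatkernel-bothside} at all; it is quoted from Anker--Ji \cite[Theorem~3.7]{AJ} as a known background result and then used as a black box. There is therefore no ``paper's own proof'' to compare your proposal against.

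Your outline is broadly in the spirit of how such estimates are actually obtained in \cite{AJ} and its predecessors: spherical inversion, a short-time parametrix comparison with the Euclidean heat kernel, and a long-time saddle-point analysis around $\la_\ast=iY/(2t)$ after a contour shift. But as a self-contained argument it has substantive gaps. The contour shift and amplitude control require the full Harish-Chandra series expansion of $\phi_\la$ together with uniform control of its remainder near the walls of $\overline{\mathfrak a^+}$; the phrase ``Anker's uniform estimates for $\phi_\la$ on shifted contours'' conceals the main analytic work. More seriously, the mechanism you propose for the \emph{lower} bound is not one: dyadic decompositions and repeated integration by parts produce only upper bounds and cannot, by themselves, ``rule out hidden cancellations'' in the oscillatory part of $\phi_\la$. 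Establishing the sharp lower bound in the full range $|Y|\leq\kappa(1+t)$ is precisely the principal contribution of \cite{AJ} and occupies the bulk of that paper. You correctly identify this as the core difficulty, but the proposal contains no actual argument to resolve it.
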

Let $\alpha>0$ and $1 < p < \infty$. We recall that the Sobolev space $W^{\alpha, p}(X)$ is the image of $L^p(X)$ under the operator $(-\Delta)^{-\alpha/2}$, equipped with the norm
\bes
\|f\|_{W^{\alpha, p}(X)}= \|(-\Delta)^{\alpha/2} f\|_{L^p(X)}.
\ees 
If $\alpha = N$ is a non-negative integer, then $W^{\alpha, p}(X)$ coincides with the classical Sobolev
space
\bes
W^{N, p}(X) = \{f \in L^p(X) : \nabla^j f \in L^p(X), \forall 1 \leq j \leq N \},
\ees
defined by means of covariant derivatives. We refer to \cite{Tb} for more details about
function spaces on Riemannian manifolds.
For $p=2$, the  Sobolev space of order $\alpha$ on $X$ is equivalently defined by  
\bes
W^{\alpha, 2}(X)= \big\{f\in L^2(X) ~|~ \|f\|_{W^{\alpha, 2}(X)}^2:= \int_{\mathfrak a^\ast \times K} |\tilde f(\lambda, k)|^2 ~(|\lambda|^2+|\rho|^2)^{\sigma}~|{\bf c}(\lambda)|^{-2}~d\lambda~dk< \infty \big\}.
\ees 
In \cite{An1}, J.-P. Anker proved the following H\"ormander-Mikhlin type multiplier theorem in the context of Riemannian symmetric spaces of the noncompact type. Let $k$ be a $K$-biinvariant tempered distribution on $G$ and let $m$ be its spherical Fourier transform.
\begin{thm}
Let $1 < p < \infty, ~v = |1/p - 1/2|$ and $N = [v n]+ 1$. Then $Tf = f \ast k$ is a bounded operator on $L^p(X)$, provided that
\begin{enumerate}
\item[(a)] $m$ extends to a holomorphic function inside the tube $\mathcal I^v= \mathfrak a^\ast + i~ co(W.2v \rho)$,

\item[(b)] $\nabla^i m$ (for $i=0, \cdots, N)$  extends continuously to the whole of $\mathcal I^v$, with
\bes
\sup_{\lambda \in \mathcal I^v}(1+|\lambda|)^{-i}~|\nabla^i m(\lambda)|< \infty.
\ees
\end{enumerate}
\end{thm}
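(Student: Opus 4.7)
The plan is to follow the approach of \cite{An1}, combining an analytic interpolation in the spectral parameter with sharp kernel estimates that come from the holomorphic extension of the symbol $m$. First, by duality (noting that the adjoint of $T$ has symbol $\overline{m(-\overline\lambda)}$, which satisfies the same hypotheses with the same value of $v$), it suffices to prove the theorem for $1 < p \leq 2$. The case $p = 2$ is immediate: by the Plancherel formula (\ref{plancherel}) together with (\ref{hlsphreln}), one has $\|Tf\|_{L^2} \leq \sup_{\lambda \in \mathfrak a^*}|m(\lambda)| \cdot \|f\|_{L^2}$, which is finite by condition (b) with $i = 0$.

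For $1 < p < 2$, the strategy is to bound the $K$-biinvariant convolution kernel $k$ pointwise. Writing it via the inverse spherical Fourier transform as
\bes
k(\exp Y) = |W|^{-1} \int_{\mathfrak a^*} m(\lambda)\,\phi_\lambda(\exp Y)\, |{\bf c}(\lambda)|^{-2}\,d\lambda,
\ees
one uses condition (a) to shift the contour of integration to $\mathfrak a^* + i\sigma\rho$, where $\sigma = 2v$, and (by the symmetry of the integrand under $W$) to any extreme point of $co(W.\sigma\rho)$ that is optimal for the direction of $Y$. Combined with (\ref{est-phi0}), this produces a pointwise factor comparable to $\phi_0(\exp Y)\,e^{-(1-\sigma)\rho(Y^+)}$ for $Y$ in the positive Weyl chamber. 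The derivative hypothesis (b) then supplies, through integration by parts in $\lambda$ (the number of derivatives being dictated by the polynomial growth $|{\bf c}(\lambda)|^{-2} \asymp |\lambda|^{n-l}$), polynomial decay of the form $(1+|Y|)^{-N}$ with $N=[vn]+1$; for small $|Y|$, a Sobolev-type embedding in $\lambda$ (using the same $N$) gives uniform boundedness of the kernel.

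These estimates place $k$ in an appropriate Kunze-Stein / Herz-majorant class on the semisimple group $G$: a $K$-biinvariant convolution kernel yields a bounded operator on $L^p(X)$ for $1 < p < 2$ provided $k \,\phi_0^{-2/p'}$ lies in a weighted Lorentz space $L^{p',1}$, and the tube width $\sigma = 2v$ is precisely the parameter that buys this weighted integrability after one weighs in the Haar density (\ref{J-est}). By the standard Kunze-Stein majorant principle, convolution with such a $k$ is then bounded on $L^p(X)$.

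The main obstacle is the careful bookkeeping in the passage from multiplier hypotheses to kernel estimates: one must track the behavior of $|{\bf c}(\lambda)|^{-2}$ near the walls of the Weyl chamber, the directional dependence of the optimal contour shift within $co(W.\sigma\rho)$, and the interplay with the volume density (\ref{J-est}), so that the bounds in the final weighted Lorentz majorant are uniform and the constant in (b) enters linearly. The threshold $N=[vn]+1$ emerges as the minimal integer for which the Sobolev embedding in the $\lambda$ variable closes in the relevant norm, matching the classical Hörmander-Mikhlin threshold in the Euclidean limit.
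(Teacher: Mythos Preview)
The paper does not prove this statement: it is quoted verbatim as a known result of Anker \cite{An1} and used as a black box to derive Corollary~\ref{cor-sobolev}. There is therefore no ``paper's own proof'' to compare your proposal against. Your sketch is a plausible high-level summary of the strategy in \cite{An1} (contour shift into the tube $\mathcal I^v$, pointwise kernel estimates via the Harish-Chandra expansion, and a Herz--Kunze--Stein majorant argument), but since the present paper provides no argument of its own, the only meaningful comparison would be with Anker's original article rather than with anything here.
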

The multiplier of the operator $(-\Delta-|\rho|^2 + \zeta^2)^{-\alpha/2}$ is given by $(\langle\lambda, \lambda \rangle + \zeta^2)^{-\alpha/2}$ and this can be extended to a holomorphic function inside the tube $\{\lambda \in \mathfrak a_{\mathbb C}^\ast: |\Im \lambda| < \zeta\}$. Therefore, by using the theorem above we have the following Sobolev embedding theorem on the fractional order Sobolev spaces $W^{\alpha,p}(X)$.
\begin{cor} \label{cor-sobolev}
Let $1 < p < \infty, \alpha > 0$ and $\zeta > 2|\rho||1/p-1/2|$. Then there exists a positive constant $S_p>0$ such that for all $f\in W^{\alpha, p}(X)$
\bes
\|f\|_p\leq S_p \|(-\Delta- |\rho|^2+\zeta^2)^{\frac{\alpha}{2}} f\|_p.
\ees
\end{cor}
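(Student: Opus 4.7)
The plan is to reduce the claim to a direct application of Anker's multiplier theorem stated just above the corollary. Once we know that the operator $T := (-\Delta - |\rho|^2 + \zeta^2)^{-\alpha/2}$ is bounded on $L^p(X)$ with norm $S_p$, the inequality follows immediately by setting $g := (-\Delta - |\rho|^2 + \zeta^2)^{\alpha/2} f$ and writing $f = Tg$, so that $\|f\|_p = \|Tg\|_p \leq S_p \|g\|_p$. Thus the entire task reduces to verifying the two hypotheses of Anker's theorem for the multiplier of $T$.

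First I would identify the spherical multiplier. By Theorem \ref{philambdathm}(5), $\Delta \phi_\lambda = -(|\lambda|^2 + |\rho|^2)\phi_\lambda$, so $-\Delta - |\rho|^2 + \zeta^2$ has spherical multiplier $|\lambda|^2 + \zeta^2$, and the multiplier of $T$ is the $W$-invariant function
\[
m(\lambda) = (\langle \lambda, \lambda\rangle + \zeta^2)^{-\alpha/2};
\]
in particular $T$ is convolution with a $K$-biinvariant tempered distribution, as required for Anker's theorem to apply.

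Next I would verify the two hypotheses with $v = |1/p - 1/2|$ and $N = [vn] + 1$. For hypothesis (a), I note that every $\eta \in \mathrm{co}(W \cdot 2v\rho)$ has norm at most $2v|\rho|$, since the Weyl orbit consists of vectors of this common length and convex combinations cannot increase the norm. For $\lambda = \xi + i\eta \in \mathcal I^v$, separating real and imaginary parts gives $\langle \lambda, \lambda\rangle + \zeta^2 = (|\xi|^2 - |\eta|^2 + \zeta^2) + 2i\langle \xi, \eta\rangle$, and the hypothesis $\zeta > 2|\rho||1/p-1/2| = 2v|\rho|$ prevents both parts from vanishing simultaneously. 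Hence $\langle \lambda, \lambda\rangle + \zeta^2$ is nowhere zero on the simply connected tube $\mathcal I^v$, and the principal branch of the $-\alpha/2$-th power extends $m$ holomorphically on $\mathcal I^v$. For (b), a routine induction on $i$ shows that $|\nabla^i m(\lambda)| \lesssim (1+|\lambda|)^{-\alpha - i}$ uniformly on $\mathcal I^v$, which is far stronger than the bound $(1+|\lambda|)^{-i}|\nabla^i m(\lambda)| < \infty$ that Anker's theorem requires.

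Once both hypotheses are confirmed for $i = 0, \dots, N$, Anker's theorem delivers the $L^p$-boundedness of $T$ and the corollary follows by the reduction in the opening paragraph. I do not anticipate any serious obstacle in this argument; the only delicate observation is that the size of the shift $\zeta$ in the operator matches exactly the width of the largest tube $\mathcal I^v$ on which $m$ can be holomorphic, which explains why the borderline hypothesis $\zeta > 2|\rho||1/p-1/2|$ is precisely what Anker's machinery demands.
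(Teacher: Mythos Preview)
Your proposal is correct and follows exactly the route the paper takes: the paper simply observes that the multiplier $(\langle\lambda,\lambda\rangle+\zeta^2)^{-\alpha/2}$ extends holomorphically to the tube $\{|\Im\lambda|<\zeta\}\supset\mathcal I^v$ and then invokes Anker's theorem. You have supplied the details the paper leaves implicit---the nonvanishing of $\langle\lambda,\lambda\rangle+\zeta^2$ on $\mathcal I^v$ via the real-part bound, and the derivative estimates for hypothesis~(b)---but the strategy is identical.
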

We recall the following analogue of the Poincar\'e-Sobolev inequality
for the fractional Laplace-Beltrami operator on $X$. For proof, we refer the reader to \cite[Theorem 1.11]{BP}.
\begin{thm} \label{thm-p-s}
Let $\dim X=n\geq 3$ and $0<\sigma<\min\{l+2|\Sigma_0^+|, n\}$. Then for $2< p\leq \frac{2n}{n-\sigma}$ there exists $S=S(n, \sigma, p)$ such that for all $u\in W^{\frac{\sigma}{2}, 2}(X)$, 
\bes
\|(-\Delta-|\rho|^2)^{\sigma/4}u\|_2^2 \geq S\|u\|_{p}^2.
\ees
\end{thm}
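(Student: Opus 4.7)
The inequality is equivalent, upon setting $g = (-\Delta - |\rho|^2)^{\sigma/4} u$, to the boundedness of the operator $T_\sigma := (-\Delta - |\rho|^2)^{-\sigma/4}$ from $L^2(X)$ to $L^p(X)$ for every $p$ in the stated range. I will concentrate on the endpoint $p_\ast = 2n/(n-\sigma)$; the subcritical exponents $2 < p < p_\ast$ will then follow from the same kernel analysis. To realise $T_\sigma$ as a convolution operator, I use the subordination formula
\[
T_\sigma f = \mathcal{K}_{\sigma/4} \ast f, \qquad \mathcal{K}_s(x) := \frac{1}{\Gamma(s)} \int_0^\infty t^{s-1} e^{t|\rho|^2} h_t(x)\,dt,
\]
valid since $-\Delta - |\rho|^2$ is nonnegative with $L^2$-spectrum $[0,\infty)$. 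A standard $TT^\ast$ argument, using self-adjointness of $T_\sigma$, reduces the endpoint bound to showing that the iterated kernel $\mathcal{K}_{\sigma/2} = \mathcal{K}_{\sigma/4} \ast \mathcal{K}_{\sigma/4}$ gives a bounded convolution operator $L^{q}(X) \to L^{p_\ast}(X)$, where $q = 2n/(n+\sigma)$ is conjugate to $p_\ast$.

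The heart of the argument is a pair of pointwise estimates for $\mathcal{K}_{\sigma/2}$, both derived from the two-sided Anker-Ji heat kernel bound (Theorem \ref{est-heatkernel-bothside}):
\[
\mathcal{K}_{\sigma/2}(\exp Y) \ \lesssim \ \begin{cases} |Y|^{\sigma - n}, & |Y| \le 1, \\ \phi_0(\exp Y)\,(1+|Y|)^{\sigma - \nu}, & |Y| \ge 1, \end{cases}
\]
where $\nu = l + 2|\Sigma_0^+|$ is the pseudo-dimension. The local bound reflects the fact that for small $|Y|$ and $t$ the heat kernel is essentially the Euclidean Gauss kernel, so that the subordination integral reproduces the Euclidean Riesz singularity (finite since $\sigma < n$). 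The tail bound is obtained by inserting the full Anker-Ji estimate, including the $(1+t)^{(n-l)/2 - |\Sigma_0^+|}$ correction, into the subordination integral and performing the substitution $s = |Y|^2/(4t)$; this converts the $t$-integral into an incomplete $\Gamma$-integral whose convergence as $|Y|\to\infty$ is precisely the hypothesis $\sigma < \nu$.

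Given these bounds I decompose $\mathcal{K}_{\sigma/2} = \mathcal{K}^0 + \mathcal{K}^\infty$ into its local and global parts. The domination $\mathcal{K}^0(x) \lesssim |x|^{\sigma - n}$, combined with the comparability of the Riemannian volume and Lebesgue measure in normal coordinates at the origin, transfers the classical Hardy-Littlewood-Sobolev inequality on $\R^n$ to give $\mathcal{K}^0 \ast : L^q(X) \to L^{p_\ast}(X)$ with the sharp exponent $p_\ast = 2n/(n-\sigma)$. For $\mathcal{K}^\infty$, the decay $\phi_0(\exp Y)(1+|Y|)^{\sigma-\nu}$, combined with the polar volume element $J(\exp Y) \asymp e^{2\rho(Y)}$, places this kernel in the framework of the Kunze-Stein-Herz-Cowling convolution inequalities on semisimple Lie groups, which supply the required $L^q \to L^{p_\ast}$ bound for $p_\ast > 2$. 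Summing the two contributions gives the endpoint, and a parallel but simpler argument, applied directly to $\mathcal{K}_{\sigma/4}$ together with Young's inequality (permissible since $\mathcal{K}_{\sigma/4}$ has strictly better integrability than $\mathcal{K}_{\sigma/2}$ at the subcritical exponents), yields the full range $2 < p \le p_\ast$.

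I expect the main obstacle to be the sharp tail estimate on $\mathcal{K}_{\sigma/2}$. The appearance of the pseudo-dimension $\nu$ rather than the topological dimension $n$ in the decay exponent is the nontrivial imprint of the exponential volume growth of $X$, and requires a Laplace-type analysis of the concentration of the subordination integral around $t \sim |Y|^2$, delicately tracking the polynomial factor $(1+t)^{(n-l)/2 - |\Sigma_0^+|}$ in the Anker-Ji bound. It is this sharper tail decay, and not the local singularity, that forces the hypothesis $\sigma < l + 2|\Sigma_0^+|$—a restriction without any Euclidean analogue.
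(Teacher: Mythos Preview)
The paper does not prove this theorem. It is quoted from the companion work \cite{BP}, with the sentence ``For proof, we refer the reader to \cite[Theorem~1.11]{BP}.'' There is therefore no in-paper argument against which to compare your proposal.

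As an independent sketch your strategy is sound in outline, and the kernel bounds you state are exactly those recorded elsewhere in the paper: your $\mathcal{K}_{\sigma/2}$ is the paper's $k_\sigma$, with the local singularity $|x|^{\sigma-n}$ from Proposition~\ref{prop-k0} and the Anker--Ji tail $|x|^{\sigma-\nu}\phi_0(x)$ from Theorem~\ref{thm-est-infty}(ii), both valid precisely under the hypothesis $0<\sigma<\min\{\nu,n\}$. Two steps, however, are left schematic. The transfer of Euclidean Hardy--Littlewood--Sobolev to the compactly supported piece $\mathcal{K}^0$ needs a real argument, since convolution on $X$ is group convolution rather than Euclidean convolution; a workable route is a weak-type $(1,n/(n-\sigma))$ estimate (from the distributional bound on $|x|^{\sigma-n}$ near the origin) together with Marcinkiewicz interpolation. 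More seriously, the phrase ``Kunze--Stein--Herz--Cowling'' for the tail is not yet a proof: the basic Kunze--Stein inequality $L^p\ast L^2\subset L^2$ for $p<2$ does not directly yield an $L^{p_\ast'}\to L^{p_\ast}$ bound with $p_\ast'<2<p_\ast$, and you must identify precisely which Lorentz-space refinement or Herz-type majorization you intend to invoke, and then verify that $\phi_0(x)|x|^{\sigma-\nu}\chi_{\{|x|\ge 1\}}$ lies in the corresponding space (this is where the restriction $\sigma<\nu$ will actually be consumed). Filling in these two points would turn the sketch into a proof.
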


\section{Optimal Asymptotic Estimates of Bessel-Green-Riesz kernels}
In what follows, $a\lesssim b$ or $a= \mathcal O(b)$ will stand for $a\leq Cb$ with a positive constant $C$ and $a\sim b$ stand for $C^{-1}b\leq a \leq Cb$.

We first set
\be \label{defn-gamma}
\gamma(\alpha) = \frac{2^\alpha~\pi^{n/2}~\Gamma(\alpha/2)}{\Gamma\left((n-\alpha)/2\right)}, \:\:\:\: \txt{ for } 0<\alpha< n. 
\ee
It is well-known that, on the Euclidean spaces, the
following identity of convolution holds \cite[Chapter V]{St}: for 
$\alpha, \beta>0 $ with $\alpha+ \beta < n$
\be \label{gamma-rn}
\int_{\R^n}\|t\|^{\alpha-n}\|t-s\|^{\beta-n}~dt= \frac{\gamma(\alpha) \gamma(\beta)}{\gamma(\alpha+\beta)} ~\|s\|^{\alpha+\beta-n},\:\:\:\: s\in \R^n;
\ee
where the function $\gamma(\eta)$ is defined in (\ref{defn-gamma}) and $\|\cdot\|$ is the Euclidean norm. In order to prove Adams inequality on compact Riemannian manifold (M), L. Fontana proved an analogue of this formula on $M$ \cite[Lemma 2.1]{F}. In this section, we need the following version of this formula valid on a compact subset of $X$. This is essentially proved in \cite{F}, but for the sake of completeness we sketch the proof.
\begin{lem} \label{lem-M}
Suppose $\alpha, \beta>0$ satisfying $\alpha+\beta<n$ and $r>0$. Then there exists $\epsilon$ satisfying $0<\epsilon<\min\{1, n-\alpha-\beta\}$ such that for  $x\in {\bf B}(o, r)$
\bes
\int_{{\bf B}(o, r)} |y|^{\alpha-n} ~|y^{-1}x|^{\beta-n}~dy\leq \frac{\gamma(\alpha)\gamma(\beta)}{\gamma(\alpha+\beta)}~|x|^{\alpha+\beta-n}\left(1+ \mathcal O(|x|^\epsilon)\right).
\ees
\end{lem}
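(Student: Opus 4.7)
The plan is to pull the integral back to the tangent space $\mathfrak p$ at $o$ via the exponential map and compare with the Euclidean identity (\ref{gamma-rn}). Parametrize points of $X$ near $o$ by $\xi \in \mathfrak p$, writing $y = (\exp \xi)\cdot o$, and similarly $x = (\exp \eta)\cdot o$ with $\eta \in \mathfrak p$. Three standard ingredients will drive the argument, uniformly on any bounded set of $\mathfrak p$: first, $|y| = \|\xi\|$ and $|x| = \|\eta\|$, where $\|\cdot\|$ is the norm on $\mathfrak p$ induced by the Killing form; second, since $X$ has nonpositive sectional curvature, Cartan--Hadamard yields $|y^{-1}x| = \mathbf d(y, x) \geq \|\xi - \eta\|$; third, the Jacobian $j(\xi)$ of the change of variable $y = (\exp \xi)\cdot o$ is smooth with $j(0) = 1$, whence $j(\xi) = 1 + \mathcal O(\|\xi\|^2)$ near the origin.

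Since $\beta - n < 0$, the Cartan--Hadamard inequality reverses to $|y^{-1}x|^{\beta - n} \leq \|\xi - \eta\|^{\beta - n}$. Combining the three ingredients, the integral in the lemma is bounded above by
\begin{equation*}
\int_{\|\xi\| < r} \|\xi\|^{\alpha - n}\, \|\xi - \eta\|^{\beta - n}\, j(\xi)\, d\xi.
\end{equation*}
I would then decompose this as the full Euclidean convolution over $\mathbb R^n \cong \mathfrak p$ of $\|\xi\|^{\alpha - n}\|\xi - \eta\|^{\beta - n}$, plus corrections. The Euclidean identity (\ref{gamma-rn}) evaluates the main term exactly as $\bigl(\gamma(\alpha)\gamma(\beta)/\gamma(\alpha + \beta)\bigr)\,|x|^{\alpha + \beta - n}$. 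The tail $\int_{\|\xi\| \geq r}$ is bounded uniformly in $\|\eta\| \leq r/2$, contributing an additive error $\mathcal O(1)$ and hence a relative error of size $|x|^{n - \alpha - \beta}$ against the main term. The Jacobian perturbation $j(\xi) - 1 = \mathcal O(\|\xi\|^2)$, integrated against the main integrand with a suitable cutoff at scale $|x|$ or $r$, produces a further correction of order $|x|^{\alpha + \beta - n + \mu}$ for some $\mu > 0$. Choosing any $\epsilon \in (0, \min\{1, n - \alpha - \beta\})$ then absorbs both error sources into the claimed factor $1 + \mathcal O(|x|^\epsilon)$.

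The main obstacle I expect is keeping track of the leading constant: the conclusion is an asymptotic with the \emph{exact} Euclidean constant $\gamma(\alpha)\gamma(\beta)/\gamma(\alpha + \beta)$ rather than merely $\lesssim$, so every Riemannian-to-Euclidean passage must produce a multiplicative $(1 + \mathrm{small})$ error, never an additive loss in the leading order. This requires applying Cartan--Hadamard with the correct sign (exploiting $\beta - n < 0$ to turn the lower bound on the distance into an upper bound on the integrand). The other delicate point is the regime where $n - \alpha - \beta$ is close to zero: the tail correction then contributes a relative error of size $|x|^{n - \alpha - \beta}$, which is the binding constraint and explains the restriction $\epsilon < \min\{1, n - \alpha - \beta\}$ in the statement. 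When $\alpha + \beta + 2 \geq n$, the naive shifted-exponent integral $\int \|\xi\|^{\alpha-n+2}\|\xi-\eta\|^{\beta-n}\,d\xi$ fails to converge at infinity, so the Jacobian error must be controlled on the annulus $\|\xi\| \leq r$ directly, costing at worst a logarithm that is still absorbed in $\mathcal O(|x|^\epsilon)$.
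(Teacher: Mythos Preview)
Your proposal is correct and follows essentially the same route as the paper: pull back via the exponential map, use nonpositive curvature to get $|y^{-1}x|^{\beta-n}\le\|\xi-\eta\|^{\beta-n}$, control the Jacobian by $1+\mathcal O(\|\xi\|)$, and reduce to the Euclidean identity (\ref{gamma-rn}). The only cosmetic difference is that the paper (following Fontana \cite{F}) carries along an auxiliary comparison hyperbolic space of constant curvature $-\mathcal K_r$ via the Rauch Comparison Theorem, whereas you invoke Cartan--Hadamard directly; on a symmetric space of noncompact type these are equivalent, and your streamlining is harmless.
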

\begin{proof}
For Riemannian symmetric spaces of noncompact type, it is well-known that the sectional curvature is everywhere less than or equals zero \cite[Theorem 3.1, p.241]{H}. On the other hand,  on a compact subset of a Riemannian manifold, the sectional curvature is bounded \cite[Corollary, p.167]{BC}. Therefore, for $r>0$ there exists $\mathcal K_r>0$ such that  for any plane section $P$ at any point $x\in {\bf B}(o, r)$ the sectional curvature $\mathcal K(P)$ satisfies $-\mathcal K_r\leq \mathcal K(P)\leq 0$. 

Let $\mathbb B^n$ be the $n$-dimensional hyperbolic space of constant curvature $-\mathcal K_r$ and $\exp'$ be the corresponding exponential map. Let $o'\in \mathbb B^n$ and $\mathcal B(o', r)$ be the geodesic ball centred at $o'$ and of radius $r$ in $\mathbb B^n$. We consider normal geodesic coordinates on ${\bf B}(o, r)$ and on $\mathcal B(o', r)$ in $X$ and $\mathbb B^n$ respectively. It is a feature of these coordinates that the tangent space at the center is isometric to the standard $n$-dimensional Euclidean space. So, by choosing orthonormal basis in $T_oX=\mathfrak p$ and $T_{o'}\mathbb B^n$, we can identify both the tangent spaces with standard $\R^n$.

If $x$ and $y$ are two points in ${\bf B}(o, r)\subset X$, we consider their normal geodesic coordinates $s$ and $t$ points in $\mathfrak p \cong \R^n$, uniquely determined by $x = \exp(s)$ 
and $y = \exp(t)$. We now construct two points $x', y'$ in $\mathbb B^n$ by $x'=\exp'(s)$ and $y'=\exp'(t)$. The Rauch Comparison Theorem \cite[Theorem 2.3]{F} implies that
\be\label{est-M1}
\|s\|\leq |x|\leq d_{\mathbb B^n}(o', x'), \:\: \|t\|\leq |y|\leq d_{\mathbb B^n}(o', y'), \:\: \textit{ and }\:\:  \|s-t\|\leq |y^{-1} x|\leq d_{\mathbb B^n}(x', y'),
\ee
where $d_{\mathbb B^n}$ is the hyperbolic metric on $\mathbb B^n$. As in \cite[eqn.(15)]{F}, we have that 
\be \label{est-M2}
d_{\mathbb B^n}(x', y')\leq \|s-t\|\left\{
1+\mathcal O\left((\|s\|+ \|t\|)^2\right)\right\},
\ee
where the quantity $\mathcal O\left((\|s\|+ \|t\||^2\right) \leq C_{r, \mathcal K_r} (\|s\|+ \|t\||^2$, for some  $C_{r, \mathcal K_r}>0$ depends only on the radius $r$ and curvature $\mathcal K_r$. Also, we have $dy= \left(1+ \mathcal O(\|t\|)\right)dt$  on the compact set $\overline{{\bf B}(o, r)}$ \cite[Prop. 2.2]{F}. We now choose a small number $\epsilon$ such that $0<\epsilon<\min\{1, n-\alpha-\beta\}$. Then, in normal geodesic coordinates around the origin $o$, using the notations
introduced above, we obtain by (\ref{est-M1}) that 
\beas
\int_{{\bf B}(o, r)} |y|^{\alpha-n} ~|y^{-1}x|^{\beta-n}~dy &\leq& \int_{B(0, r)} \|t\|^{\alpha-n}~\|s-t\|^{\beta-n} \left(1+ \mathcal O(\|t\|)\right)dt\\
&\leq& \int_{B(0, r)} \|t\|^{\alpha-n}~\|s-t\|^{\beta-n} \left(1+ \mathcal O(\|t\|^\epsilon)\right)dt.
\eeas
Therefore, using the Euclidean relation (\ref{gamma-rn}), the estimates (\ref{est-M1}) and (\ref{est-M2}) we have
\beas
&& \int_{{\bf B}(o, r)} |y|^{\alpha-n} ~|y^{-1}x|^{\beta-n}~dy\\
&\leq&\frac{\gamma(\alpha) \gamma(\beta)}{\gamma(\alpha+\beta)} ~\|s\|^{\alpha+\beta-n} ~\left(
1+\mathcal O(\|s\|^\epsilon)\right)\\
&\leq& \frac{\gamma(\alpha) \gamma(\beta)}{\gamma(\alpha+\beta)} ~d_{\mathbb B^n}(o', x')^{\alpha+\beta-n}\left(1+ \mathcal O(d_{\mathbb B^n}(o', x')^\epsilon)\right)\left(1+\mathcal O(\|s\|^2)\right)^{\eta}\\
&\leq& \frac{\gamma(\alpha) \gamma(\beta)}{\gamma(\alpha+\beta)} ~|x|^{\alpha+\beta-n}\left(1+ \mathcal O(|x|^\epsilon)\right),
\eeas
where $\eta$ is a positive number and the quantity $\mathcal O(\|s\|^2)$ is bounded on $B(0, r)$. 
\end{proof}

Let $k_{\zeta, \alpha}$ be the Schwartz  kernel of the operator $(-\Delta -|\rho|^2 +\zeta^2)^{-\alpha/2}$, for $\alpha\in \R$ and $\zeta>0$. Also, let  $k_{\alpha}$ be the kernel of $(-\Delta-|\rho|^2)^{-\alpha/2}$, for $0< \alpha< l+2|\Sigma^+_0|$. The following asymptotic estimates of the Bessel-Green-Riesz kernels at infinity is due to
Anker and Ji \cite[Theorem 4.2.2]{AJ}.
\begin{thm} \label{thm-est-infty}
\begin{enumerate}
\item[(i)] For $\zeta>0$ and $\beta>0$ there holds
\bes 
k_{\zeta, \beta}(x) \sim |x|^{(\beta-l-1)/2-|\Sigma_0^+|}~\phi_0(x)~ e^{-\zeta |x|}, \:\: |x|\geq 1.
\ees
\item[(ii)] For $\zeta=0$ and for $0 < \alpha < l+ 2|\Sigma_0^+|$ there holds
\bes 
k_{\alpha}(x) \sim |x|^{\alpha-l-2|\Sigma_0^+|}~\phi_0(x), \:\: |x|\geq 1.
\ees
\end{enumerate}
\end{thm}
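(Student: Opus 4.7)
My plan is to use the subordination formula expressing $(-\Delta - |\rho|^2 + \zeta^2)^{-\beta/2}$ as a Laplace transform of the heat semigroup, combined with the sharp two-sided heat kernel asymptotic of Theorem \ref{est-heatkernel-bothside} and a Laplace (saddle-point) analysis of the resulting time integral. Starting from the scalar identity $\lambda^{-\beta/2} = \Gamma(\beta/2)^{-1}\int_0^\infty t^{\beta/2-1} e^{-\lambda t}\,dt$ and the spectral theorem, I would write
\bes
k_{\zeta,\beta}(x) \;=\; \frac{1}{\Gamma(\beta/2)}\int_0^\infty t^{\beta/2-1}\, e^{-(\zeta^2 - |\rho|^2)t}\, h_t(x)\,dt,
\ees
and for case (ii) the same formula with $\zeta = 0$. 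Plugging in the heat kernel estimate and identifying $\prod_{\alpha\in\Sigma_0^+}(1+\alpha(Y))\,e^{-\rho(Y)}$ with $\phi_0(x)$ via (\ref{est-phi0}), the $e^{|\rho|^2 t}$ factor cancels the $e^{-|\rho|^2 t}$ produced by the heat kernel, reducing the problem to the scalar integral
\bes
k_{\zeta,\beta}(x) \;\sim\; \phi_0(x) \int_0^\infty t^{(\beta-n)/2-1}(1+t)^{(n-l)/2-|\Sigma_0^+|}\, e^{-\zeta^2 t - |x|^2/(4t)}\,dt.
\ees

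In case (i), the exponent $-\zeta^2 t - |x|^2/(4t)$ attains its unique minimum at $t^\ast = |x|/(2\zeta)$ with value $-\zeta|x|$ and second derivative of order $1/|x|$. Laplace's method then produces the factor $e^{-\zeta|x|}$, a Gaussian width of order $\sqrt{|x|}$, and the polynomial prefactor $(t^\ast)^{(\beta-n)/2-1}(1+t^\ast)^{(n-l)/2-|\Sigma_0^+|} \sim |x|^{(\beta-l-2)/2-|\Sigma_0^+|}$; collecting the powers of $|x|$ yields exactly the asserted $|x|^{(\beta-l-1)/2-|\Sigma_0^+|}\phi_0(x)e^{-\zeta|x|}$. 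In case (ii) with $\zeta = 0$ the damping at infinity is lost, and the restriction $\alpha < l + 2|\Sigma_0^+|$ is precisely what ensures convergence of the $t$-integral at infinity; splitting at $t=1$, the piece $t \leq 1$ is killed super-exponentially by $e^{-|x|^2/(4t)}$, while on $t \geq 1$ the substitution $u = |x|^2/(4t)$ converts the integral into an incomplete Gamma integral whose leading order for large $|x|$ is $|x|^{\alpha - l - 2|\Sigma_0^+|}\,\Gamma\bigl((l+2|\Sigma_0^+|-\alpha)/2\bigr)$, as required.

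The main technical obstacle is that the two-sided heat kernel estimate of Theorem \ref{est-heatkernel-bothside} holds only in the range $|Y| \leq \kappa(1+t)$, so for fixed $x$ with $|x|\geq 1$ the asymptotic form of $h_t(x)$ is available only once $t \gtrsim |x|$. For the upper bound, the complementary small-$t$ range must be handled by a rougher global Gaussian upper estimate on $h_t$; the prefactor $e^{-|x|^2/(4t)}$ renders this contribution negligible. For the matching lower bound, I would restrict integration to a neighbourhood of width $\sqrt{|x|}$ around the saddle $t^\ast \sim |x|$ in case (i), or to a dyadic window $t \in [c|x|^2, C|x|^2]$ around the effective saddle $t \sim |x|^2$ in case (ii); on such windows the two-sided heat kernel asymptotic is valid, and Laplace's method already shows that this window captures the full leading order of the integral.
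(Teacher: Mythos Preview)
The paper does not prove this theorem at all: it is quoted verbatim as a result of Anker and Ji \cite[Theorem 4.2.2]{AJ}, and the present paper only \emph{uses} it. So there is no ``paper's own proof'' to compare against.

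Your proposal is nonetheless a correct outline, and it is essentially the strategy by which Anker and Ji themselves obtain these asymptotics: subordinate the resolvent power to the heat semigroup via the Mellin/Gamma identity, insert the sharp two-sided heat kernel bound (their Main Theorem, restated here as Theorem \ref{est-heatkernel-bothside}), and analyse the resulting one-variable $t$-integral by Laplace's method in case (i) and by the incomplete-Gamma substitution in case (ii). Your identification of the saddle $t^\ast=|x|/(2\zeta)$, the cancellation of $e^{\pm|\rho|^2t}$, and the bookkeeping of powers of $|x|$ are all correct and yield exactly the stated exponents. The convergence condition $\alpha<l+2|\Sigma_0^+|$ in (ii) is, as you say, precisely integrability of the large-$t$ tail.

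Two minor comments. First, in case (ii) there is no genuine saddle since the damping $e^{-\zeta^2t}$ is absent; the main contribution comes from the whole half-line $t\gtrsim |x|^2$, not a window of fixed dyadic width. Your lower-bound argument still works because restricting to $[c|x|^2,C|x|^2]$ already captures a positive fraction of the Gamma integral. Second, the constraint $|Y|\le\kappa(1+t)$ in Theorem \ref{est-heatkernel-bothside} is harmless at the saddle in case (i) provided one takes $\kappa>2\zeta$, and is automatically satisfied on the window in case (ii); for the complementary small-$t$ upper bound one indeed needs a global Gaussian upper estimate on $h_t$ valid without the restriction, which is available (e.g.\ the Davies--Mandouvalos or Anker estimates) but is not stated in the present paper.
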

In the remaining part of this section we first derive the optimal bounds for the kernels $k_{\alpha}$, for $0<\alpha<l+2|\Sigma_0^+|$ and $k_{\zeta, \beta}$, for  $\zeta > 0, \beta<n$ near the origin (Proposition \ref{prop-k0} and Proposition \ref{prop-k-zb}). Then we establish sharp
estimates for the convolutions $k_{\alpha} \ast k_{\zeta, \beta}$, for $0 < \alpha+\beta < n$ near the origin and away from the origin using Fourier analysis on symmetric spaces (Proposition \ref{est-0-conv} and Proposition \ref{est-infty-conv}).
\begin{prop} \label{prop-k0}
Let $0< \alpha< \min\{n, l+2|\Sigma_0^+|\}$. There holds 
\bes
k_{\alpha}(x) \leq \frac{1}{\gamma(\alpha)} \frac{1}{|x|^{n-\alpha}}+ \mathcal O\left(\frac{1}{|x|^{n-\alpha-1}}\right), \:\: 0<|x|<1;
\ees
where $\gamma(\alpha)$ is defined in (\ref{defn-gamma})
\end{prop}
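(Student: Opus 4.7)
The plan is to use Bochner's subordination identity to express the Bessel--Green--Riesz kernel in terms of the heat kernel on $X$:
\[ k_\alpha(x) \;=\; \frac{1}{\Gamma(\alpha/2)}\int_0^\infty t^{\alpha/2-1}\, e^{|\rho|^2 t}\, h_t(x)\, dt. \]
I would split this integral at $t=1$ into a small-time piece $I_1=\int_0^1$ and a large-time piece $I_2=\int_1^\infty$. The near-origin singularity $\gamma(\alpha)^{-1}|x|^{\alpha-n}$ will emerge from $I_1$ via comparison with the Euclidean heat kernel, while $I_2$ is controlled through Theorem~\ref{est-heatkernel-bothside} together with the spectral hypothesis $\alpha<l+2|\Sigma_0^+|$.

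For $I_2$, with $|x|=|Y|<1$ and $t\geq 1$, the bound $|Y|\leq 1\leq 1+t$ lets me apply Theorem~\ref{est-heatkernel-bothside} with $\kappa=1$ to conclude
\[ e^{|\rho|^2 t}\, h_t(\exp Y) \;\lesssim\; t^{-n/2}(1+t)^{(n-l)/2-|\Sigma_0^+|}, \]
since the factors $\prod_{\alpha\in\Sigma_0^+}(1+\alpha(Y))\, e^{-\rho(Y)-|Y|^2/(4t)}$ are uniformly bounded for $|Y|\leq 1$, $t\geq 1$. Therefore
\[ I_2 \;\lesssim\; \int_1^\infty t^{(\alpha-l)/2-1-|\Sigma_0^+|}\, dt \;<\; \infty, \]
convergent precisely under $\alpha<l+2|\Sigma_0^+|$, so $I_2$ is uniformly bounded for $|x|<1$. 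For $I_1$, I would use a refined short-time expansion of the heat kernel,
\[ h_t(\exp Y) \;=\; (4\pi t)^{-n/2}\, e^{-|Y|^2/(4t)}\,\bigl(1+\mathcal{O}(t+|Y|)\bigr), \]
valid uniformly for $t\in(0,1]$, $|Y|\leq 1$. Substituting this and using the change of variable $u=|Y|^2/(4t)$, the model Gaussian integral
\[ \int_0^\infty t^{\alpha/2-n/2-1}\,e^{-|Y|^2/(4t)}\, dt \;=\; 2^{n-\alpha}\,\Gamma((n-\alpha)/2)\, |x|^{\alpha-n}, \]
combined with the prefactor $\left(\Gamma(\alpha/2)(4\pi)^{n/2}\right)^{-1}$, reproduces exactly the Euclidean Riesz constant $\gamma(\alpha)^{-1}$ in front of $|x|^{\alpha-n}$. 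The $\mathcal{O}(t)$ correction produces an $\mathcal{O}(|x|^{\alpha-n+2})$ term, the $\mathcal{O}(|Y|)$ correction produces the leading $\mathcal{O}(|x|^{\alpha-n+1})$ term, and truncating the $t$-integral at $1$ only adds a Gaussian-decaying remainder, all absorbed into $\mathcal{O}(|x|^{\alpha-n+1})$.

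The principal obstacle is pinning down the sharp prefactor $(4\pi t)^{-n/2}$ in the short-time heat kernel expansion. Theorem~\ref{est-heatkernel-bothside} provides only a two-sided $\asymp$ bound and does not identify the implicit constant, whereas the sharp Euclidean coefficient $\gamma(\alpha)^{-1}$ forces this prefactor to be exactly $(4\pi)^{-n/2}$. This sharp value is supplied by the universal Minakshisundaram--Pleijel short-time expansion of the heat kernel on any Riemannian manifold; once it is in hand, the elementary Taylor expansions $(1+t)^\sigma=1+\mathcal{O}(t)$, $\prod_{\alpha\in\Sigma_0^+}(1+\alpha(Y))=1+\mathcal{O}(|Y|)$, and $e^{-|\rho|^2 t-\rho(Y)}=1+\mathcal{O}(t+|Y|)$ on the small-scale regime, together with the Anker--Ji shape, immediately produce the refined expansion used above.
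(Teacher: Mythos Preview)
Your proposal is correct and follows essentially the same strategy as the paper: the subordination formula, the split at $t=1$, the short-time heat kernel expansion with sharp Euclidean leading coefficient $(4\pi t)^{-n/2}$ to extract $\gamma(\alpha)^{-1}|x|^{\alpha-n}$, and a large-time integrability argument exploiting $\alpha<l+2|\Sigma_0^+|$. The only cosmetic difference is in the citations: the paper quotes the local expansion $h_t(x)=e^{-|x|^2/4t}t^{-n/2}v_0(x)+\mathcal{O}\bigl(e^{-c|x|^2/t}t^{-n/2+1}\bigr)$ with $v_0(x)=(4\pi)^{-n/2}+\mathcal{O}(|x|^2)$ from \cite{An2} rather than the general Minakshisundaram--Pleijel parametrix you invoke, and for the large-time piece it uses another Anker estimate from \cite{An2} in place of Theorem~\ref{est-heatkernel-bothside}.
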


\begin{proof}
By the Mellin type expression
\be \label{mellin}
(-\Delta-|\rho|^2)^{-\alpha/2}= \frac{1}{\Gamma(\alpha/2)}\int_{0}^\infty t^{\alpha/2-1}e^{-t(-\Delta -|\rho|^2)}~dt. 
\ee
We will now use the following local expansion of the heat kernel $h_t(x)$ 
\bes
h_t(x)=e^{-|x|^2/4t} t^{-n/2} v_0(x) + \mathcal O \left(e^{-c|x|^2/t}t^{-n/2+1}\right),
\ees
where $v_0(x)=(4\pi)^{-n/2}+ \mathcal O (|x|^2)$ and $0<c<1/4$ \cite[eqn.(3.9), p.278]{An2}. Using this it follows from (\ref{mellin}) that on the kernel level
\bea \label{eqn-k0}
k_{\alpha}(x)&=& \frac{1}{\Gamma(\alpha/2)} \int_{0}^\infty t^{\alpha/2-1}~h_t(x)~e^{|\rho|^2 t}~dt \nonumber\\
&=& \frac{1}{\Gamma(\alpha/2)} \int_{0}^1 t^{\alpha/2-1}~\left(e^{-|x|^2/4t} t^{-n/2} v_0(x) + \mathcal O\left(e^{-c|x|^2/t}t^{-n/2+1}\right)\right) e^{|\rho|^2 t}~dt \nonumber\\
&& +~\frac{1}{\Gamma(\alpha/2)} \int_{1}^\infty t^{\alpha/2-1}~h_t(x)~e^{|\rho|^2 t}~dt \nonumber \\
&=& \frac{1}{\Gamma(\alpha/2)} \int_{0}^1 t^{\alpha/2-1}~\left(e^{-|x|^2/4t}~ t^{-n/2}~v_0(x) + \mathcal O\left(e^{-c|x|^2/t}t^{-n/2+1}\right)\right) \left(1+ \mathcal O(t)\right)~dt \nonumber\\
&& +~\frac{1}{\Gamma(\alpha/2)} \int_{1}^\infty t^{\alpha/2-1}~h_t(x)~e^{|\rho|^2 t}~dt \nonumber\\
&\leq& \frac{1}{\Gamma(\alpha/2)}~v_0(x) \int_{0}^1 e^{-|x|^2/4t}~t^{\alpha/2-1-n/2}~dt+ \int_{0}^1  \mathcal O\left(e^{-c|x|^2/t}~t^{\alpha/2-n/2}\right)~dt \nonumber\\
&& +~\frac{1}{\Gamma(\alpha/2)} \int_{1}^\infty t^{\alpha/2-1}~h_t(x)~e^{|\rho|^2 t}~dt.
\eea
Now, we have that
\bea \label{est-1}
\int_{0}^1 e^{-|x|^2/4t}~t^{\alpha/2-1-n/2}~dt &=& \left(\frac{2}{|x|}\right)^{n-\alpha}\int_{|x|^2/4}^\infty e^{-s}~s^{(n-\alpha)/2-1}~ds \nonumber\\
&\leq& \left(\frac{2}{|x|}\right)^{n-\alpha}~\Gamma\left(\frac{n-\alpha}{2}\right).
\eea
Similarly, the second integral is of $\mathcal O(|x|^{\alpha-n+1})$. For the third term, we use the estimate on $h_t$ \cite[Theorem 3.1, ii)]{An2} that there exists $C>0$ such that
\bes 
h_t(x)\leq Ct^{-l/2-|\Sigma_0^+|}~(1+|x|^2)^{|\Sigma_0^+|/2}~e^{-|\rho|^2 t-\rho(\log x)-|x|^2/(4t)}, \:\: t\geq 1, \:\: |x|\leq \sqrt t.
\ees
Using this estimate it follows that for all $0<|x|<1$ and $0<\alpha<l+2|\Sigma_0^+|$
\be \label{est-2}
\frac{1}{\Gamma(\alpha/2)} \int_{1}^\infty t^{\alpha/2-1}~h_t(x)~e^{|\rho|^2 t}~dt \leq C \int_{1}^\infty t^{\alpha/2-1-l/2-|\Sigma_0^+|}~dt<\infty.
\ee
Using $v_0(x)=(4\pi)^{-n/2}+ \mathcal O (|x|^2)$ and the estimates (\ref{est-1}) and (\ref{est-2}),
it follows from the equation (\ref{eqn-k0}) that for $0<|x|<1$
\beas
k_{\alpha}(x) &\leq& \frac{1}{\Gamma(\alpha/2)}~(4\pi)^{-n/2}~\Gamma\left((n-\alpha)/2\right)~2^{n-\alpha}~|x|^{\alpha-n}+ \mathcal O(|x|^{\alpha-n+1})+ \mathcal O(1)\\
&\leq& \frac{1}{\gamma(\alpha)}~|x|^{\alpha-n} + \mathcal O(|x|^{\alpha-n+1}).
\eeas
This completes the proof.
\end{proof}

\begin{prop} \label{prop-k-zb}
Let $\zeta>0$ and $0< \beta< n$. Then there exists $\tilde\epsilon$ satisfying $0<\tilde\epsilon< \min\{1, n-\beta\}$ such that  
\bes
k_{\zeta,\beta}(x)= \frac{1}{\gamma(\beta)} \frac{1}{|x|^{n-\beta}} + \mathcal O\left(\frac{1}{|x|^{n-\beta-\tilde\epsilon}}\right), \:\: 0< |x|< 1.
\ees
\end{prop}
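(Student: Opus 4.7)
\medskip

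\noindent\textbf{Plan of proof.} The proof will closely mirror that of Proposition~\ref{prop-k0}, the key new ingredient being the extra exponential factor $e^{-\zeta^2 t}$ coming from the spectral shift. Starting from the subordination/Mellin identity
\[
(-\Delta-|\rho|^2+\zeta^2)^{-\beta/2} \;=\; \frac{1}{\Gamma(\beta/2)}\int_0^\infty t^{\beta/2-1}\, e^{-t(-\Delta-|\rho|^2+\zeta^2)}\,dt,
\]
I read off the kernel as
\[
k_{\zeta,\beta}(x) \;=\; \frac{1}{\Gamma(\beta/2)} \int_0^\infty t^{\beta/2-1}\, h_t(x)\, e^{(|\rho|^2-\zeta^2)t}\,dt,
\]
and split the integral at $t=1$ into a short-time piece $I_1$ and a long-time piece $I_2$.

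\medskip

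\noindent For $I_1$, I would insert the local heat kernel expansion $h_t(x)= e^{-|x|^2/4t}\, t^{-n/2} v_0(x) + \mathcal O\!\left(e^{-c|x|^2/t}\, t^{-n/2+1}\right)$ used in Proposition~\ref{prop-k0}, together with $v_0(x)=(4\pi)^{-n/2}+\mathcal O(|x|^2)$ and the trivial expansion $e^{(|\rho|^2-\zeta^2)t}=1+\mathcal O(t)$ on $[0,1]$. The leading term is then
\[
\frac{(4\pi)^{-n/2}}{\Gamma(\beta/2)} \int_0^1 t^{\beta/2-1-n/2}\, e^{-|x|^2/4t}\,dt,
\]
which, after the substitution $s=|x|^2/4t$ and using $\int_0^{|x|^2/4} e^{-s} s^{(n-\beta)/2-1}\,ds = \mathcal O(|x|^{n-\beta})$, equals $\gamma(\beta)^{-1}|x|^{\beta-n}+\mathcal O(1)$. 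All remaining contributions to $I_1$ (from the $\mathcal O(|x|^2)$ in $v_0$, the heat-kernel remainder, and the $\mathcal O(t)$ expansion of the exponential) are $\mathcal O(|x|^{\beta-n+2})$ after the same change of variables.

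\medskip

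\noindent For $I_2$ I would invoke the Anker--Ji estimate of the heat kernel at large time (the same one used in \eqref{est-2}): on $|x|<1\le t$ it yields $h_t(x)\, e^{|\rho|^2 t}\lesssim t^{-l/2-|\Sigma_0^+|}$ (the remaining factors being uniformly bounded for bounded $|x|$). The decisive difference with Proposition~\ref{prop-k0} is now the surviving Gaussian weight in $t$:
\[
I_2 \;\lesssim\; \int_1^\infty t^{\beta/2-1-l/2-|\Sigma_0^+|}\, e^{-\zeta^2 t}\,dt \;<\;\infty,
\]
which converges for every $\beta>0$, removing the restriction $\beta<l+2|\Sigma_0^+|$ that was needed in the $\zeta=0$ case. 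Thus $I_2=\mathcal O(1)$.

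\medskip

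\noindent Putting the pieces together gives $k_{\zeta,\beta}(x)=\gamma(\beta)^{-1}|x|^{\beta-n}+\mathcal O(|x|^{\beta-n+2})+\mathcal O(1)$ on $0<|x|<1$. Since $|x|^{\beta-n+\tilde\epsilon}\ge 1$ for $|x|\le 1$ whenever $\tilde\epsilon<n-\beta$, any choice $\tilde\epsilon\in(0,\min\{1,n-\beta\})$ absorbs both the $\mathcal O(|x|^{\beta-n+2})$ and the $\mathcal O(1)$ terms into the claimed $\mathcal O(|x|^{\beta-n+\tilde\epsilon})$. The only real obstacle is bookkeeping: one has to verify that every correction--—from $v_0$, from the heat-kernel remainder, from $e^{(|\rho|^2-\zeta^2)t}-1$, and from the incomplete Gamma tail--—is dominated by this single error bound, and that the leading constant $(4\pi)^{-n/2}\Gamma((n-\beta)/2)\, 2^{n-\beta}/\Gamma(\beta/2)$ indeed simplifies to $\gamma(\beta)^{-1}$. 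No new analytical idea beyond Proposition~\ref{prop-k0} is required.
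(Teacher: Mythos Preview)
Your approach is correct and genuinely different from the paper's. The paper does not revisit the heat-kernel integral at all; instead it proves the estimate first for integer $\beta=m$ by induction, using the pointwise inequality $k_{\zeta,m+1}=k_{\zeta,1}\ast k_{\zeta,m}\le k_{1}\ast k_{\zeta,m}$, the bound of Proposition~\ref{prop-k0} for $k_{1}$, and Lemma~\ref{lem-M} to control the convolution $\int_{{\bf B}(o,2)}|y|^{1-n}|y^{-1}x|^{m-n}\,dy$ on balls. A general $\beta$ is then written as $\beta=\tilde\beta+m$ with $0<\tilde\beta<3$ and handled by one more convolution.

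Your route is shorter and treats all $0<\beta<n$ in a single stroke. The decisive observation---that the extra factor $e^{-\zeta^2 t}$ makes $\int_1^\infty t^{\beta/2-1-l/2-|\Sigma_0^+|}e^{-\zeta^2 t}\,dt$ converge for every $\beta>0$, thereby removing the pseudo-dimension restriction of Proposition~\ref{prop-k0}---is exactly right. Your argument also naturally gives the two-sided asymptotic stated in the proposition, whereas the paper's chain of inequalities literally yields only the upper bound (which is all that is used downstream). The paper's approach, on the other hand, develops precisely the convolution machinery that makes Proposition~\ref{est-0-conv} immediate; with your method that proposition would still need the Lemma~\ref{lem-M} computation separately.
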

\begin{proof}
We first prove the result when $\beta=m$ is an integer satisfying $1\leq m < n-1$. Precisely, we prove that there exists $\epsilon_0$ satisfying $0< \epsilon_0< \min\{1, n-m\}=1$ such that
\be \label{eqn-1000}
k_{\zeta, m}(x)\leq \frac{1}{\gamma(m)} \frac{1}{|x|^{n-m}} + \mathcal O\left(\frac{1}{|x|^{n-m-\epsilon_0}}\right), \:\: 0< |x|< 1.
\ee
This will be done by induction. It follows by the Mellin type expressions that on the kernel level	
\beas
k_{\zeta, 1}(x)&=& \frac{1}{\Gamma(1/2)} \int_{0}^\infty t^{-1/2}~h_t(x)~e^{\rho^2 t-\zeta^2 t}~dt \nonumber\\
&\leq & \frac{1}{\Gamma(1/2)} \int_{0}^\infty t^{-1/2}~h_t(x)~e^{\rho^2 t}~dt \nonumber\\
&=& k_{1}(x).
\eeas
Thus, by Proposition \ref{prop-k0}, the estimate (\ref{eqn-1000}) holds for $m=1$. In fact, this is true as long as $m <\min\{n, l+2|\Sigma_0^+|\}$ by Proposition \ref{prop-k0}. Now, suppose that the estimate (\ref{eqn-1000}) is valid for $m\in \Z$ satisfying $1\leq m < n-2$ with $0< \epsilon_0<\min\{1, n-m\}=1$. We prove this is also true for $m+1$, that is, there exists $\tilde\epsilon$ satisfying $0< \tilde\epsilon< \min\{1, n-m-1\}$ such that 
\be \label{eqn-100}
k_{\zeta, m+1}(x)\leq \frac{1}{\gamma(m+1)} \frac{1}{|x|^{n-m-1}} + \mathcal O\left(\frac{1}{|x|^{n-m-1-\tilde\epsilon}}\right), \:\: 0< |x|< 1.
\ee
 Let $x\in X$ with $0< |x|< 1$. Then
\be \label{k0*k-est}
k_{1}\ast k_{\zeta, m}(x)= \int_{{\bf B}(o, 2)} k_{1}(y)~k_{\zeta,m}(y^{-1}x)~dy+ \int_{X\backslash {\bf B}(o, 2)} k_{1}(y)~k_{\zeta,m}(y^{-1}x)~dy :=I_1+I_2.
\ee
We first prove that the second integral $I_2$ on the right-hand side is uniformly bounded independent of $x$. Indeed,  by H\"older's inequality, Theorem \ref{thm-est-infty}, integral formula (\ref{int-formula}) with (\ref{J-est}) and the estimate (\ref{est-phi0}) of $\phi_0$, it follows that for $|x|<1$
\beas \label{bound-2nd}
I_2 &=&\int_{X\backslash {\bf B}(o, 2)} k_{1}(y)~k_{\zeta,m}(y^{-1}x)~dy\\
&\leq& \left(\int_{X\backslash {\bf B}(o, 2)} |k_{1}(y)|^2 ~dy\right)^{1/2}~\left(\int_{X\backslash {\bf B}(o, 1)} |k_{\zeta, m}(z)|^2~dz\right)^{1/2}\\
&\lesssim& \left( \int_{X\backslash {\bf B}(o, 2)}|y|^{2-2l-4|\Sigma_0^+|}~\phi_0(y)^2~dy \right)^{1/2}
\left(\int_{X\backslash {\bf B}(o, 1)}|z|^{m-l-1-2|\Sigma_0^+|}~\phi_0(z)^2~ e^{-2\zeta |z|}~dz \right)^{1/2}\\
&\lesssim&  \left( \int_{\{Y\in \overline{\mathfrak a_+}: |Y|\geq 2\}} |Y|^{2-2l-4|\Sigma_0^+|}~|Y|^{2|\Sigma_0^+|} e^{-2\rho(Y)}~e^{2\rho(Y)}~dY \right)^{1/2}\\
&& \left(\int_{\{Y\in \overline{\mathfrak a_+}: |Y|\geq 1\}} |Y|^{m-l-1-2|\Sigma_0^+|}~|Y|^{2|\Sigma_0^+|} e^{-2\rho(Y)}~ e^{-2\zeta |Y|}~e^{2\rho(Y)}~dY \right)^{1/2}\\
&\lesssim& \left(\int_{2}^\infty r^{2-2l-2|\Sigma_0^+|}~r^{l-1}~dr\right)^{1/2}~\left(\int_{1}^\infty r^{m-l-1}~e^{-2\zeta r}~r^{l-1}~dr \right)^{1/2}< \infty.
\eeas
Now, we estimate the first integral $I_1$ in (\ref{k0*k-est}). Since $m< n-2$ and $\epsilon_0<1$, we can choose a $\epsilon_1 \in (0, 1)$ such that $m+\epsilon_0+\epsilon_1< n-1$. We first observe that $|y|^{-(n-2)}\leq |y|^{-(n-1-\epsilon_1)}$, for small $|y|$. Using this fact, the estimate of $k_{\alpha}$ in Proposition \ref{prop-k0} and the estimate  (\ref{eqn-1000}) of $k_{\zeta, m}$ we get that 
\beas \label{eqn-k0-kz-0}
I_1 &=& \int_{{\bf B}(o, 2)} k_{1}(y)~ k_{\zeta, m}(y^{-1}x)~dy \nonumber\\
&\leq& \int_{{\bf B}(o, 2)} \left(\frac{1}{\gamma(1)} \frac{1}{|y|^{n-1}} + \frac{C_1}{|y|^{n-1-\epsilon_1}} \right)~\left(\frac{1}{\gamma(m)} \frac{1}{|y^{-1}x|^{n-m}}+ \frac{C_2}{|y^{-1}x|^{n-m-\epsilon_0}}\right)~dy\\
&=& \frac{1}{\gamma(1)\gamma(m)} \int_{{\bf B}(o, 2)} \frac{1}{|y|^{n-1}}~\frac{1}{|y^{-1}x|^{n-m}}~dy + \frac{C_2}{\gamma(1)} \int_{{\bf B}(o,2)} \frac{1}{|y|^{n-1}} \frac{1}{|y^{-1} x|^{n-m-\epsilon_0}}~dy \\
&& + \frac{C_1}{\gamma(m)}\int_{{\bf B}(o, 2)} \frac{1}{|y|^{n-1-\epsilon_1}}~\frac{1}{|y^{-1} x|^{n-m}} ~dy + C_1 C_2 \int_{{\bf B}(o, 2)} \frac{1}{|y|^{n-1-\epsilon_1}}~\frac{1}{|y^{-1} x|^{n-m-\epsilon_0}} ~dy.
\eeas
Since $m +1+\epsilon_1+\epsilon_0< n$, using Lemma \ref{lem-M} we get that there exists $0<\tilde \epsilon< \min\{1, n-(m+1+\epsilon_0+\epsilon_1)\}$ such that for $|x|<1$
\bes
I_1\leq\frac{1}{\gamma(m+1)}~ \frac{1}{| x|^{n-m-1}}+ \mathcal O\left(\frac{1}{|x|^{n-m-1-\tilde\epsilon}}\right).
\ees
Putting this in (\ref{k0*k-est}) we finally have for $|x|<1$
\bes
k_{\zeta, m+1}(x)= k_{\zeta, 1} \ast k_{\zeta, m}(x)\leq k_{1} \ast k_{\zeta, m}(x)\leq \frac{1}{\gamma(m+1)}~ \frac{1}{|x|^{n-m-1}}+ \mathcal O\left(\frac{1}{|x|^{n-m-1-\tilde \epsilon}}\right).
\ees
By induction this completes the proof of the lemma when $\beta=m\in \Z$ with $1\leq m  < n-1$. 

Now, we prove the required estimate for arbitrary $\beta$ (not necessarily integer) with $0< \beta< n$.
We choose $0< \tilde \beta< 3$ and an integer $m$ with $0\leq m< n-1$ such that $\beta= \tilde \beta +m< n$. Without loss of generality, we can assume $m\geq 1$. Thus
\bea \label{est-epsilon}
k_{\zeta, \beta}(x) &=& k_{\zeta, \tilde \beta} \ast k_{\zeta, m}\leq k_{\tilde \beta} \ast k_{\zeta, m}\nonumber\\
&=& \int_{{\bf B}(o, 2)} k_{\tilde \beta}(y)~k_{\zeta,m}(y^{-1}x)~dy+ \int_{X\backslash {\bf B}(o, 2)} k_{\tilde \beta}(y)~k_{\zeta,m}(y^{-1}x)~dy.
\eea
The second integral is bounded as in the case of $I_2$ in (\ref{k0*k-est}). The first integral can be estimated as $I_1$ in (\ref{k0*k-est}). To see this, we first notice that we may choose $\epsilon_0$ small enough satisfying $0<\epsilon_0< \min\{1, n-\beta\}$ such that (\ref{eqn-1000}) holds. Let us choose $\epsilon_2$ such that $0<\epsilon_2<\min\{1, n-\beta-\epsilon_0\}$. By Proposition \ref{prop-k0} it follows that
\beas
&& \int_{{\bf B}(o, 2)} k_{\tilde \beta}(y)~k_{\zeta,m}(y^{-1}x)~dy \\
&\leq& \int_{{\bf B}(o, 2)} \left(\frac{1}{\gamma(\tilde \beta)} \frac{1}{|y|^{n-\tilde \beta}} + \frac{C_1}{|y|^{n-\tilde \beta-\epsilon_2}} \right)~\left(\frac{1}{\gamma(m)} \frac{1}{|y^{-1}x|^{n-m}}+ \frac{C_2}{|y^{-1}x|^{n-m-\epsilon_0}}\right)~dy.
\eeas
Therefore, by Lemma \ref{lem-M} there exists $\tilde\epsilon$ satisfying $0< \tilde \epsilon<\min\{1, n-\beta-\epsilon_0-\epsilon_2\}$ such that
\bes
\int_{{\bf B}(o, 2)} k_{\tilde \beta}(y)~k_{\zeta,m}(y^{-1}x)~dy \leq \frac{1}{\gamma(\beta)} \frac{1}{|x|^{n-\beta}}+\mathcal O\left(\frac{1}{|x|^{n-\beta-\tilde\epsilon}}\right).
\ees 
Putting this in (\ref{est-epsilon}) we complete the proof.  
\end{proof}	

\begin{prop} \label{est-0-conv}
Let $\zeta>0, 0< \alpha<l+2|\Sigma_0^+|$ and $0< \beta< n$ such that $0< \alpha+\beta<n$. There exists $\epsilon'$ satisfying $0< \epsilon'< \min\{1, n-\alpha-\beta\}$  such that
\bes
k_{\alpha} \ast k_{\zeta,\beta}(x)\leq \frac{1}{\gamma(\alpha+\beta)} \frac{1}{|x|^{n-\alpha-\beta}} + \mathcal O\left(\frac{1}{|x|^{n-\alpha-\beta-\epsilon'}}\right), \:\: 0< |x|< 1.
\ees
\end{prop}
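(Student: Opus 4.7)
I will follow the template of the proof of Proposition~\ref{prop-k-zb}, splitting
\bes
k_{\alpha}\ast k_{\zeta,\beta}(x) = \int_{{\bf B}(o,2)} k_{\alpha}(y)\,k_{\zeta,\beta}(y^{-1}x)\,dy + \int_{X\setminus {\bf B}(o,2)} k_{\alpha}(y)\,k_{\zeta,\beta}(y^{-1}x)\,dy =: J_1+J_2,
\ees
and treating the two pieces separately. Note that for $|x|<1$ and $|y|\geq 2$ we have $|y^{-1}x|\geq 1$, so in $J_2$ both factors sit in the regime of Theorem~\ref{thm-est-infty}, while in $J_1$ the near-origin estimates supplied by Proposition~\ref{prop-k0} and Proposition~\ref{prop-k-zb} are the relevant tool.

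For $J_2$, I would argue exactly as for the term $I_2$ in the proof of Proposition~\ref{prop-k-zb}: H\"older's inequality reduces matters to showing that $k_{\alpha}$ lies in some $L^p$ at infinity, since the exponential decay $e^{-\zeta|\cdot|}$ in Theorem~\ref{thm-est-infty}(i) places $k_{\zeta,\beta}$ into every $L^{p'}$ on $X\setminus{\bf B}(o,1)$. Using Theorem~\ref{thm-est-infty}(ii) together with the polar integral formula~(\ref{int-formula}), the Jacobian bound~(\ref{J-est}) and the estimate~(\ref{est-phi0}) for $\phi_0$, the integrand becomes (schematically) $|Y|^{p(\alpha-l-|\Sigma_0^+|)} e^{-(p-2)\rho(Y)}$, which is integrable on $\{|Y|\geq 2,\ Y\in \overline{\mathfrak a_+}\}$ as soon as $p$ is taken sufficiently large; the hypothesis $\alpha<l+2|\Sigma_0^+|$ is in fact more than enough to guarantee this. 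Hence $J_2\lesssim 1$ uniformly in $x$ with $|x|<1$.

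For $J_1$, I substitute the pointwise bounds
\bes
k_{\alpha}(y)\leq \frac{1}{\gamma(\alpha)}|y|^{\alpha-n}+\mathcal O\!\bigl(|y|^{\alpha+\epsilon_1-n}\bigr),\qquad k_{\zeta,\beta}(y^{-1}x)\leq \frac{1}{\gamma(\beta)}|y^{-1}x|^{\beta-n}+\mathcal O\!\bigl(|y^{-1}x|^{\beta+\tilde\epsilon-n}\bigr),
\ees
choosing $\epsilon_1\in(0,1)$ (using $|y|,|y^{-1}x|\leq 3$ on ${\bf B}(o,2)$ to weaken the original error $\mathcal O(|y|^{\alpha+1-n})$ of Proposition~\ref{prop-k0}) and shrinking the $\tilde\epsilon$ from Proposition~\ref{prop-k-zb} so that $\alpha+\beta+\epsilon_1+\tilde\epsilon<n$. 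Expanding the product yields four integrals over ${\bf B}(o,2)$, and Lemma~\ref{lem-M} applies to each. The leading one gives
\bes
\frac{1}{\gamma(\alpha)\gamma(\beta)}\int_{{\bf B}(o,2)}|y|^{\alpha-n}|y^{-1}x|^{\beta-n}\,dy \leq \frac{1}{\gamma(\alpha+\beta)}|x|^{\alpha+\beta-n}\bigl(1+\mathcal O(|x|^{\epsilon})\bigr),
\ees
while each of the three remaining cross and error integrals contributes $\mathcal O(|x|^{\alpha+\beta+\eta-n})$ with $\eta\in\{\epsilon_1,\tilde\epsilon,\epsilon_1+\tilde\epsilon\}$. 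Setting $\epsilon'=\min\{\epsilon,\epsilon_1,\tilde\epsilon\}$, which satisfies $0<\epsilon'<\min\{1,n-\alpha-\beta\}$, and absorbing the bounded $J_2=\mathcal O(1)\leq \mathcal O(|x|^{\alpha+\beta+\epsilon'-n})$ into the error, will conclude the proof. The main obstacle, really the only one, is the $\epsilon$-bookkeeping needed to ensure that every application of Lemma~\ref{lem-M} is legal (the relevant Riesz exponents must sum to strictly less than $n$) and that the final $\epsilon'$ stays in the prescribed range; this is harmless since the error exponents in both Proposition~\ref{prop-k0} and Proposition~\ref{prop-k-zb} may be shrunk freely on compact neighbourhoods of the origin.
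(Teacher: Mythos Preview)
Your approach matches the paper's exactly: the paper's entire proof of this proposition is the single sentence ``The proof is exactly the same as that of Proposition~\ref{prop-k-zb}'', and your $J_1/J_2$ decomposition---Lemma~\ref{lem-M} on the near piece, H\"older plus Theorem~\ref{thm-est-infty} on the far piece---is precisely that template.

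One slip to correct in the $J_2$ argument: it is not true that the factor $e^{-\zeta|\cdot|}$ places $k_{\zeta,\beta}$ into \emph{every} $L^{p'}$ on $X\setminus{\bf B}(o,1)$. For $p'<2$ the Jacobian $J(\exp Y)\asymp e^{2\rho(Y)}$ competes with $\phi_0(\exp Y)^{p'}e^{-p'\zeta|Y|}\asymp e^{-p'\rho(Y)-p'\zeta|Y|}$, and once $p'$ drops below $2|\rho|/(|\rho|+\zeta)$ the integral diverges. So ``$p$ sufficiently large'' is the wrong direction; instead choose any $p$ \emph{slightly above} $2$. Then on the $k_\alpha$ side the exponential $e^{-(p-2)\rho(Y)}$ (with $\rho(Y)\gtrsim |Y|$ on $\overline{\mathfrak a_+}$) already forces convergence regardless of the polynomial power, while $p'$ stays close enough to $2$ that the $k_{\zeta,\beta}$ integral also converges. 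With this adjustment your plan goes through verbatim.
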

\begin{proof}
The proof is exactly the same as that of Proposition \ref{prop-k-zb}.
\end{proof}

\begin{prop} \label{est-infty-conv}
Let $\zeta>0, 0< \alpha<l+2|\Sigma_0^+|$ and $0< \beta< n$ such that $0< \alpha+\beta<n$. For $\zeta'\in (0, \zeta)$ we have
\bes
k_{\alpha} \ast k_{\zeta, \beta}(x) \lesssim e^{-\zeta'|x|}~\phi_0(x) + \left(\chi_{1/2} |\cdot|^{\alpha-l-2|\Sigma_0^+|}~\phi_0(\cdot)\right)\ast k_{\zeta, \beta} (x), \:\: \textit{ for }| x|\geq 1,
\ees
where $\chi_{1/2}$ is the cutoff function vanishing in ${\bf B}(o, 1/2)$ and identically equals $1$ otherwise.
\end{prop}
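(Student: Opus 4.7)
The plan is to decompose the convolution integral at the scale $|y|=1/2$. Writing
\bes
k_\alpha \ast k_{\zeta,\beta}(x) = I_{\text{in}}(x) + I_{\text{out}}(x),
\ees
with $I_{\text{in}}(x)=\int_{{\bf B}(o,1/2)} k_\alpha(y)\,k_{\zeta,\beta}(y^{-1}x)\,dy$ and $I_{\text{out}}(x)$ the complementary integral over $X\setminus {\bf B}(o,1/2)$, I would handle the two pieces using the kernel estimates already collected. For $I_{\text{out}}$, Theorem \ref{thm-est-infty}(ii) yields $k_\alpha(y)\lesssim |y|^{\alpha-l-2|\Sigma_0^+|}\phi_0(y)$ for $|y|\geq 1$, and on the compact annulus $1/2\leq |y|\leq 1$ both sides are positive continuous functions, so the same pointwise comparison persists. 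This immediately identifies $I_{\text{out}}(x)$ with the second term on the right-hand side of the claim.

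The real work is to bound $I_{\text{in}}(x)\lesssim e^{-\zeta'|x|}\phi_0(x)$ for $|x|\geq 1$. When $|y|<1/2$ and $|x|\geq 1$, the triangle inequality gives $1/2\leq |x|-|y|\leq |y^{-1}x|\leq |x|+|y|$, so I would apply Theorem \ref{thm-est-infty}(i) (extended to $|y^{-1}x|\geq 1/2$ by continuity of $k_{\zeta,\beta}$ away from the origin):
\bes
k_{\zeta,\beta}(y^{-1}x)\lesssim |y^{-1}x|^{(\beta-l-1)/2-|\Sigma_0^+|}\,\phi_0(y^{-1}x)\,e^{-\zeta|y^{-1}x|}.
\ees
Two of the three factors are routine to treat: $e^{-\zeta|y^{-1}x|}\leq e^{\zeta/2}e^{-\zeta|x|}$ and $|y^{-1}x|^{(\beta-l-1)/2-|\Sigma_0^+|}\lesssim (1+|x|)^{M}$ uniformly for $|y|<1/2$, for some exponent $M\in\mathbb R$.

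The main obstacle is controlling $\phi_0(y^{-1}x)$ by $\phi_0(x)$ uniformly in $|y|<1/2$. I would resolve this by invoking property (6) of Theorem \ref{philambdathm} at $\lambda=0$:
\bes
\phi_0(y^{-1}x)=\int_K e^{-\rho(H(x^{-1}k))}\,e^{-\rho(H(y^{-1}k))}\,dk.
\ees
Writing $y^{-1}=k_2^{-1}\exp(-Y)k_1^{-1}$ in polar coordinates with $|Y|=|y|$ and using the left $K$-invariance of the Iwasawa projection $H$, the bound (\ref{Iwasawa}) yields $|H(y^{-1}k)|\leq |y|<1/2$ for every $k\in K$. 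Hence $e^{-\rho(H(y^{-1}k))}\leq e^{|\rho|/2}$ and $\phi_0(y^{-1}x)\leq e^{|\rho|/2}\phi_0(x)$. Assembling the three estimates gives
\bes
I_{\text{in}}(x)\lesssim (1+|x|)^{M}\,\phi_0(x)\,e^{-\zeta|x|}\int_{{\bf B}(o,1/2)} k_\alpha(y)\,dy,
\ees
where the remaining integral is finite by Proposition \ref{prop-k0} since $\alpha>0$. For any $\zeta'\in(0,\zeta)$, the polynomial prefactor is absorbed into $e^{-(\zeta-\zeta')|x|}$, producing the desired bound $I_{\text{in}}(x)\lesssim e^{-\zeta'|x|}\phi_0(x)$ and completing the proof.
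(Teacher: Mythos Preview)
Your proof is correct and follows essentially the same route as the paper: both split the convolution at $|y|=1/2$, handle $I_{\text{out}}$ via Theorem \ref{thm-est-infty}(ii), and treat $I_{\text{in}}$ by combining Theorem \ref{thm-est-infty}(i), the integral formula (\ref{phi-xy}) for $\phi_0(y^{-1}x)$, and the Iwasawa bound (\ref{Iwasawa}) to pull out $\phi_0(x)$. The only cosmetic difference is that the paper absorbs the polynomial factor $|y^{-1}x|^{(\beta-l-1)/2-|\Sigma_0^+|}$ into the exponential at the outset (writing $k_{\zeta,\beta}(y^{-1}x)\lesssim e^{-\zeta'|y^{-1}x|}\phi_0(y^{-1}x)$ directly), whereas you carry it as $(1+|x|)^M$ and absorb it at the end; both are equivalent.
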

\begin{proof}
By Theorem \ref{thm-est-infty} (ii) we have
\bea \label{est-k*k-infty}
k_{\alpha} \ast k_{\zeta, \beta}(x) 
&=& \int_{{\bf B}(o, 1/2)} k_{\alpha}(y) k_{\zeta, \beta}(y^{-1}x)~dy + \int_{X\backslash {\bf B}(o, 1/2)} k_{\alpha}(y) k_{\zeta, \beta}(y^{-1}x)~dy \nonumber\\
&\lesssim&  \int_{{\bf B}(o, 1/2)} k_{\alpha}(y) k_{\zeta, \beta}(y^{-1}x)~dy + \int_{X\backslash {\bf B}(o, 1/2)} |y|^{\alpha-l-2|\Sigma_0^+|} \phi_0(y) ~ k_{\zeta, \beta}(y^{-1}x)~dy\nonumber\\
&=& \int_{{\bf B}(o, 1/2)} k_{\alpha}(y) k_{\zeta, \beta}(y^{-1}x)~dy + \left(\chi_{1/2} |\cdot|^{\alpha-l-2|\Sigma_0^+|}~\phi_0(\cdot)\right) \ast k_{\zeta, \beta} (x).
\eea
We notice that, if $|y|< 1/2$ and $|x|\geq 1$, then $|y^{-1}x|\geq |x|-|y| \geq 1/2$. Therefore, by the estimates of $k_{\alpha}$ (Proposition \ref{prop-k0}) and $k_{\zeta, \beta}$ (Theorem \ref{thm-est-infty} (i)) we have for $|y|< 1/2$ and $|x|\geq 1$ that
\bes
k_{\alpha}(y)\lesssim  |y|^{\alpha-n},\:\: \textit{ and } \:\: 
k_{\zeta, \beta}(y^{-1}x)\lesssim  e^{-\zeta '|y^{-1}x|} \phi_0(y^{-1}x),
\ees
where $\zeta^\prime\in (0, \zeta)$ and the constant depends on $\zeta^\prime$. Using the above estimates, the property (\ref{phi-xy}), the fact $|H(y^{-1}k)|\leq |y|$ (see equation (\ref{Iwasawa})) and the integral formula (\ref{int-formula}), it follows that
\beas
&&\int_{{\bf B}(o, 1/2)} k_{\alpha}(y) k_{\zeta, \beta}(y^{-1}x)~dy \\
&\lesssim &  \int_{{\bf B}(o, 1/2)} \frac{1}{|y|^{n-\alpha}} e^{-\zeta' |y^{-1}x|}~\phi_0(y^{-1}x)~dy\\
&\leq&  e^{-\zeta'|x|}\int_{{\bf B}(o, 1/2)} \frac{1}{|y|^{n-\alpha}} ~ e^{\zeta'|y|}
~\int_K e^{-\rho\big(H(y^{-1}k) \big)}e^{-\rho(H(x^{-1}k))}~dk~dy\\
&\leq&  e^{-\zeta'|x|} \int_K e^{-\rho H(x^{-1}k)}~dk~ \int_{{\bf B}(o, 1/2)} \frac{1}{|y|^{n-\alpha}}~e^{\zeta' |y|}~e^{|\rho| |y|}~dy\\
&\lesssim &  e^{-\zeta'|x|}~\phi_0(x) \int_{\{Y\in \mathfrak a: |Y|<1/2\}} \frac{1}{|Y|^{n-\alpha}}~J(\exp Y)~dY\\
&\lesssim &  e^{-\zeta'|x|}~\phi_0(x).
\eeas
Putting this in (\ref{est-k*k-infty}) we get the required result. 
\end{proof}
\begin{rem} \label{rem-k-conv}
There exists $C>0$ such that $k_{\alpha}\ast k_{\zeta, \beta}(x)\leq C$, for all $|x| \geq 1$. Indeed, by Proposition \ref{est-infty-conv} it is enough to show that $\left(\chi_{1/2} |\cdot|^{\alpha-l-2|\Sigma_0^+|}~\phi_0(\cdot)\right)\ast k_{\zeta, \beta} (x)\leq C$, for $|x|\geq 1$. To see this we first observe that if $|x|\geq 1$ and $|y|< 1/2$, then $|y^{-1}x|\geq 1/2$. Using this, the property (\ref{phi-xy}), the estimate of $k_{\zeta, \beta}$ (Theorem \ref{thm-est-infty} and Proposition \ref{prop-k-zb}) it follows by Cauchy-Schwarz inequality that
\beas
&&\left(\chi_{1/2} |\cdot|^{\alpha-l-2|\Sigma_0^+|}~\phi_0(\cdot)\right)\ast k_{\zeta, \beta} (x)\\
&\lesssim& \int_{X\backslash {\bf B}(o, 1/2)} |y|^{\alpha-l-2|\Sigma_0^+|} \phi_0(y) k_{\zeta, \beta} (y^{-1}x)~dy\\
&\lesssim& \int_{\{y\in X \backslash {\bf B}(o, 1/2): ~|y^{-1}x|< 1/2\}} |y|^{\alpha-l-2|\Sigma_0^+|} \phi_0(y)|y^{-1}x|^{\beta-n}~dy\\
&& + \int_{\{y\in X \backslash {\bf B}(o, 1/2): ~|y^{-1}x|\geq 1/2\}} |y|^{\alpha-l-2|\Sigma_0^+|} \phi_0(y) k_{\zeta, \beta} (y^{-1}x)~dy\\
&\lesssim&  \int_{\{y\in X: ~|y^{-1}x|< 1/2\}} |y^{-1}x|^{\beta-n}~dy + \int_{\{y\in X \backslash {\bf B}(o, 1/2): ~|y^{-1}x|\geq 1/2\}} |y|^{\alpha-l-2|\Sigma_0^+|} \phi_0(y) ~k_{\zeta, \beta}(y^{-1}x)~dy\\
&\lesssim& C+\left( \int_{X\backslash {\bf B}(o, 1/2)} |y|^{2\alpha-2l-4|\Sigma_0^+|}~(\phi_0(y))^2~dy\right)^{1/2} \left(\int_{X\backslash {\bf B}(o, 1/2)}e^{-2\zeta^\prime |z|}~\left(\phi_0(z)\right)^2~dz\right)^{1/2}< \infty.
\eeas 
\end{rem}

\section{Asymptotic estimates of non-increasing rearrangement of the Bessel-Green-Riesz kernels}

For a real valued function $f$ on $X$, the non-increasing rearrangement of $f$ is defined by
\bes
f^\ast(t)= \inf \{s>0: \lambda_f(s)\leq t\},
\ees
where the distribution function $\lambda_f$ of $f$ is given by
\bes
\lambda_f(s)=|\{x\in X: |f(x)|>s\}|= \int_{\{x\in X: |f(x)|>s\}} dx.
\ees
Here we use the notation $|E|$ for the measure of a subset $E$ of $X$. We need the following two properties of the non-increasing rearrangement \cite[Prop. 1.4.5, p.46]{Gf}:
\begin{enumerate}
\item[(i)] By definition $\la_f(f^\ast(t))\leq t$. If $|f|\leq |g|$ almost everywhere, then $f^\ast\leq g^\ast$.

\item[(ii)] If there exists $c>0$ such that $|\{x\in X: |f(x)|\geq f^\ast(t)-c\}|< \infty$, then $t\leq |\{x\in X: |f(x)|\geq f^\ast(t)\}|$.
\end{enumerate} 
For the convenience of the reader we summarize the results of section 3 here. By Theorem \ref{thm-est-infty}, Proposition \ref{prop-k0}, Proposition \ref{prop-k-zb}, Proposition \ref{est-0-conv} and Proposition \ref{est-infty-conv}  we have
\begin{enumerate}
\item[a)] For $\zeta=0$ and $0< \alpha< \min\{n, l+2|\Sigma_0^+|\}$
\beas
k_{\alpha}(x) &\leq& \frac{1}{\gamma(\alpha)}\frac{1}{|x|^{n-\alpha}} +\mathcal O\left(\frac{1}{|x|^{n-\alpha-1}}\right), \:\:  0<|x|<1;\\
&\lesssim& |x|^{\alpha-l-2|\Sigma_0^+|}~\phi_0(x), \:\: |x|\geq 1. 
\eeas

\item[b)] Let $\zeta>0$ and $0< \beta<n$. There exists $\tilde\epsilon$ satisfying $0<\tilde\epsilon< \min\{1, n-\beta\}$ such that  
\bea \label{beta-est}
k_{\zeta, \beta}(x) &\leq& \frac{1}{\gamma(\beta)} \frac{1}{|x|^{n-\beta}} + \mathcal O\left(\frac{1}{|x|^{n-\beta-\tilde\epsilon}}\right), \:\:  0< |x|< 1;\nonumber\\
&\lesssim & |x|^{(\beta-l-1)/2-|\Sigma_0^+|}~e^{-\zeta |x|}~\phi_0(x), \:\: |x|\geq 1.
\eea

\item[c)] Let $\zeta>0, 0< \alpha<l+2|\Sigma_0^+|$ and $0< \beta< n$ such that $0< \alpha+\beta<n$. There exists $\epsilon'$ satisfying $0< \epsilon^\prime < \min\{1, n-\alpha-\beta\}$  such that
\bea \label{conv-est-infty}
k_{\alpha} \ast k_{\zeta, \beta}(x) &\leq& \frac{1}{\gamma(\alpha+\beta)}\frac{1}{|x|^{n-\alpha-\beta}} + \mathcal O\left(\frac{1}{|x|^{n-\alpha-\beta-\epsilon^\prime}}\right), \:\: 0<|x|< 1. \nonumber\\
&\lesssim&  e^{-\zeta'|x|}~\phi_0(x) + \left(\chi_{1/2} |\cdot|^{\alpha-l-2|\Sigma_0^+|}~\phi_0(\cdot)\right)\ast k_{\zeta, \beta} (x), \:\: \textit{ for }| x|\geq 1,
\eea
where $0< \zeta'< \zeta$.
\end{enumerate}

The behaviour of the volume of a small geodesic ball around the origin in $X$ can be expressed as follows (see \cite[Theorem 3.98]{GHL}, \cite[equation (8)]{K}):
\bea \label {vol}
|{\bf B}(o, r)| &=& \frac{\omega_{n-1}}{n}~r^n +\mathcal O(r^{n+1}), \:\: 0< r<1.
\eea

\begin{lem} \label{lem-k-ast}
Let $\zeta, \alpha, \beta, \epsilon^\prime$ be as in Proposition \ref{est-0-conv}. Then for $0< t< 2$
\bes
[k_{\alpha} \ast k_{\zeta, \beta}]^\ast(t)\leq \frac{1}{\gamma(\alpha+\beta)} \left(\frac{nt}{\omega_{n-1}}\right)^{(\alpha+\beta-n)/n} + \mathcal O\left(t^{(\alpha+\beta+\epsilon^\prime-n)/n}\right).
\ees
\end{lem}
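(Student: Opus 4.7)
Write $F := k_{\alpha}\ast k_{\zeta,\beta}$. The plan is to transfer the pointwise near-origin estimate from Proposition \ref{est-0-conv} into an estimate for the distribution function $\lambda_F$, then invert using the small-ball volume asymptotic (\ref{vol}). By Proposition \ref{est-0-conv} there exist $\epsilon'\in(0,\min\{1,n-\alpha-\beta\})$ and a constant $C_1>0$ such that
\[
F(x)\leq \psi(|x|):=\frac{1}{\gamma(\alpha+\beta)}|x|^{\alpha+\beta-n}+C_1|x|^{\alpha+\beta-n+\epsilon'},\qquad 0<|x|<1,
\]
while Remark \ref{rem-k-conv} provides a constant $C_2>0$ with $F(x)\leq C_2$ for $|x|\geq 1$. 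Note that since $\alpha+\beta-n<0$ the leading singularity dominates the derivative of $\psi$ near $0$, so $\psi$ is strictly decreasing on a small interval $(0,\delta)$.

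For $s$ large enough that $s>\max\{C_2,\psi(\delta)\}$, the super-level set is contained in the small-ball region, and the monotonicity of $\psi$ gives
\[
\{x\in X: F(x)>s\}\subseteq\{x\in X:\psi(|x|)>s\}={\bf B}(o,\psi^{-1}(s)).
\]
Hence by (\ref{vol}), $\lambda_F(s)\leq |{\bf B}(o,\psi^{-1}(s))|=\tfrac{\omega_{n-1}}{n}\psi^{-1}(s)^{n}+\mathcal O(\psi^{-1}(s)^{n+1})$. For $t>0$ sufficiently small, define $R(t)$ by $|{\bf B}(o,R(t))|=t$; inverting (\ref{vol}) yields
\[
R(t)=\left(\frac{nt}{\omega_{n-1}}\right)^{1/n}\bigl(1+\mathcal O(t^{1/n})\bigr).
\]
Choosing $s=\psi(R(t))$ gives $\psi^{-1}(s)=R(t)$, so $\lambda_F(s)\leq t$, and therefore by definition of $F^\ast$,
\[
F^\ast(t)\leq \psi(R(t))=\frac{1}{\gamma(\alpha+\beta)}R(t)^{\alpha+\beta-n}+C_1R(t)^{\alpha+\beta-n+\epsilon'}.
\]

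The final step is to expand $R(t)^{\alpha+\beta-n}$ using $(1+\mathcal O(t^{1/n}))^{\alpha+\beta-n}=1+\mathcal O(t^{1/n})$, producing
\[
\frac{1}{\gamma(\alpha+\beta)}R(t)^{\alpha+\beta-n}=\frac{1}{\gamma(\alpha+\beta)}\left(\frac{nt}{\omega_{n-1}}\right)^{(\alpha+\beta-n)/n}+\mathcal O\bigl(t^{(\alpha+\beta-n+1)/n}\bigr),
\]
while $C_1R(t)^{\alpha+\beta-n+\epsilon'}=\mathcal O(t^{(\alpha+\beta-n+\epsilon')/n})$. Since $\epsilon'<1$ and both exponents are negative, the latter error dominates, yielding the asserted bound for $t$ below a threshold $t_0$. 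For $t_0\leq t<2$, the left-hand side is bounded by $F^\ast(t_0)<\infty$ and the error term $t^{(\alpha+\beta+\epsilon'-n)/n}$ is bounded below on $[t_0,2)$, so the inequality holds with a possibly larger implicit constant.

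The only delicate point is the first step, ensuring the level-set inclusion $\{F>s\}\subseteq {\bf B}(o,\psi^{-1}(s))$: this forces us to separate the tail contribution $\{|x|\geq1\}$ (handled via the uniform bound from Remark \ref{rem-k-conv}) from the singular region and to verify the monotonicity of $\psi$. The remaining computations -- inverting the volume asymptotic and Taylor expanding the power $r\mapsto r^{\alpha+\beta-n}$ -- are routine and lose at most a factor $1+\mathcal O(t^{1/n})$, which is absorbed into the $\mathcal O(t^{(\alpha+\beta+\epsilon'-n)/n})$ error.
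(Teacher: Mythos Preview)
Your proof is correct and follows essentially the same strategy as the paper: bound $F=k_\alpha\ast k_{\zeta,\beta}$ above by a radial majorant $\psi(|x|)$ near the origin via Proposition~\ref{est-0-conv}, handle the region $|x|\ge 1$ by the uniform bound of Remark~\ref{rem-k-conv}, and then convert the pointwise bound into a rearrangement bound using the small-ball volume asymptotic~(\ref{vol}).

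The only organizational difference is the order of the inversions. The paper first inverts the majorant $h=\psi$ (via the elementary fact that $f(t)=At^{-a}(1+Bt^b)$ satisfies $f^{-1}(t)\le (A/t)^{1/a}(1+Ct^{-b/a})$), then computes $\lambda_h(t)\le |{\bf B}(o,h^{-1}(t))|$, and finally inverts once more to reach $h^\ast$. You instead invert the volume function first to produce $R(t)$ with $|{\bf B}(o,R(t))|=t$, and observe directly that $F^\ast(t)\le \psi(R(t))$; the remaining work is a single Taylor expansion of $\psi(R(t))$. Your route is marginally cleaner in that it avoids one of the two asymptotic inversions, but the underlying content is identical. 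One incidental simplification: since both exponents $\alpha+\beta-n$ and $\alpha+\beta-n+\epsilon'$ are negative (the latter because $\epsilon'<n-\alpha-\beta$), $\psi$ is strictly decreasing on all of $(0,\infty)$, so the separate monotonicity check on a small interval $(0,\delta)$ is unnecessary.
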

\begin{proof}
By (\ref{conv-est-infty}) there exists $C>0$ such that for $0<|x|< 1$, $k_{\alpha}\ast k_{\zeta, \beta}(x)\leq h(x)$, where the function $h$
is defined by
\bes
h(x)= \frac{1}{\gamma(\alpha+\beta)} \frac{1}{|x|^{n-\alpha-\beta}} + \frac{C}{|x|^{n-\alpha-\beta-\epsilon^\prime}}, \:\: x\in X.
\ees 
By Remark \ref{rem-k-conv} there exists $C_0>0$ such that $k_{\alpha}\ast k_{\zeta, \beta}(x)\leq C_0$ for $|x|\geq 1$. Therefore, $k_{\alpha}\ast k_{\zeta, \beta}(x)\leq C_0+h(x)$ for all $x\in X$ and hence it is enough to prove the required estimate for the function $h^\ast$.

First, we note that if $f(t) = At^{-a}(1 + Bt^b), t > 0$, for positive constants $A, B, a, b$ then there exists a $C > 0$ such that
\bes
f^{-1}(t)\leq [At^{-1}]^{1/a}[1+Ct^{-b/a}], \txt{ for } t>1.
\ees
Applying this to the function $f(|x|)=h(x)$,  we get that
\bes
h^{-1}(t)\leq \left[\frac{t^{-1}}{\gamma(\alpha+\beta)}\right]^{1/(n-\alpha-\beta)}~\left[1+Ct^{-\epsilon^\prime/(n-\alpha-\beta)}\right], \:\: t>1.
\ees 
Using the above inequality and (\ref{vol}) we get that for $t>1$ 
\beas
|\{x\in X: h(x)\geq t\}|\leq |{\bf B}\left(o, h^{-1}(t)\right)| &\leq& \frac{\omega_{n-1}}{n} \left(h^{-1}(t)\right)^n + C(h^{-1}(t))^{n+1}\\
&\leq& \frac{\omega_{n-1}}{n}~\left(\frac{t^{-1}}{\gamma(\alpha+\beta)}\right)^{n/(n-\alpha-\beta)}~ \left[1+ C^\prime t^{-\epsilon^\prime/(n-\alpha-\beta)}\right].
\eeas
Again, if $g(t) = At^{-a}[1 + Bt^{-b}]$, $t > 0$, for positive constants $A, B, a, b$, then there exists a $C > 0$ such that 
\bes
g^{-1}(t)\leq [At^{-1}]^{1/a}[1+Ct^{b/a}], \txt{ for } 0<t\leq 2.
\ees
Using this we get that for $0< t\leq 2$
\bes
h^\ast(t)\leq \frac{1}{\gamma(\alpha+\beta)} \left(\frac{\omega_{n-1}}{nt}\right)^{(n-\alpha-\beta)/n} ~[1+\mathcal O(t^{\epsilon^\prime/n})].
\ees
This completes the proof.
\end{proof}


\begin{lem} \label{lem-beta-ast}
Let $\zeta, \beta, \tilde \epsilon$ be as in Proposition \ref{prop-k-zb}. Then there holds
\bes
[k_{\zeta, \beta}]^\ast(t)\leq \frac{1}{\gamma(\beta)} \left(\frac{nt}{\omega_{n-1}}\right)^{(\beta-n)/n} +\mathcal O(t^{(\beta+\tilde\epsilon-n)/n}), \:\:  0<t<2.
\ees
\end{lem}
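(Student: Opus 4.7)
The plan is to mirror the proof of Lemma~\ref{lem-k-ast} almost verbatim, with $k_{\zeta,\beta}$ in place of the convolution $k_{\alpha} \ast k_{\zeta,\beta}$ and $\beta$ replacing $\alpha+\beta$. First I would fix the dominating function
\[
h(x) = \frac{1}{\gamma(\beta)}\,\frac{1}{|x|^{n-\beta}} + \frac{C}{|x|^{n-\beta-\tilde\epsilon}}, \qquad x \in X,
\]
where $C$ is the implicit constant from Proposition~\ref{prop-k-zb}, so that $k_{\zeta,\beta}(x) \leq h(x)$ on $\{0 < |x| < 1\}$. From the second estimate in (\ref{beta-est}), the exponential factor $e^{-\zeta|x|}$ dominates both $\phi_0(x)$ and the polynomial prefactor, so $k_{\zeta,\beta}$ is uniformly bounded on $\{|x|\geq 1\}$ by some constant $C_0 > 0$. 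Thus $k_{\zeta,\beta}(x) \leq C_0 + h(x)$ for every $x \in X$. Since a constant shift passes through the non-increasing rearrangement, one has $(C_0 + h)^\ast(t) = C_0 + h^\ast(t)$, and because $(\beta+\tilde\epsilon-n)/n < 0$ the constant $C_0$ is absorbed into the error $\mathcal O(t^{(\beta+\tilde\epsilon-n)/n})$ on $0 < t < 2$. Consequently the problem reduces to proving the stated bound for $h^\ast(t)$.

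The bound on $h^\ast$ is obtained by the same pair of elementary inversions used in Lemma~\ref{lem-k-ast}. Writing $h(x) = A|x|^{-a}(1 + B|x|^b)$ with $A = 1/\gamma(\beta)$, $a = n-\beta$ and $b = \tilde\epsilon \in (0, n-\beta)$, the first inversion (applied to $f(r) = Ar^{-a}(1+Br^b)$) yields, for $s$ sufficiently large,
\[
h^{-1}(s) \leq \left[\frac{s^{-1}}{\gamma(\beta)}\right]^{1/(n-\beta)}\left[1 + C'\,s^{-\tilde\epsilon/(n-\beta)}\right].
\]
Combined with the local volume asymptotic (\ref{vol}) this gives
\[
\lambda_h(s) \leq |{\bf B}(o, h^{-1}(s))| \leq \frac{\omega_{n-1}}{n}\left(\frac{s^{-1}}{\gamma(\beta)}\right)^{n/(n-\beta)}\left[1 + C''\,s^{-\tilde\epsilon/(n-\beta)}\right],
\]
again for $s$ large, where we have used $h^{-1}(s) \to 0$ as $s \to \infty$ so that the $\mathcal O\bigl((h^{-1}(s))^{n+1}\bigr)$ term arising from (\ref{vol}) is absorbed into the error. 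The second inversion (applied to $g(r) = Ar^{-a}[1+Br^{-b}]$) then transfers this distributional estimate to
\[
h^\ast(t) \leq \frac{1}{\gamma(\beta)}\left(\frac{\omega_{n-1}}{nt}\right)^{(n-\beta)/n}\bigl[1 + \mathcal O(t^{\tilde\epsilon/n})\bigr], \qquad 0 < t \leq 2,
\]
which, together with the reduction step, yields the claimed estimate.

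I do not anticipate a serious obstacle: the argument is structurally identical to that of Lemma~\ref{lem-k-ast}, since $k_{\zeta,\beta}$ satisfies the same type of two-sided structural estimates (a singular power expansion near the origin and uniform boundedness away from it) that drove that proof. The only mildly delicate verification is that the constant $C_0$ really fits inside the error $\mathcal O(t^{(\beta+\tilde\epsilon-n)/n})$ on $(0,2)$, but this is automatic from the sign of the exponent, since $\tilde\epsilon < n-\beta$ forces this to be a negatively-exponentiated power of $t$ which stays bounded below on the compact $t$-range in question.
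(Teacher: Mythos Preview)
Your proposal is correct and follows exactly the approach the paper intends: the paper's proof of Lemma~\ref{lem-beta-ast} consists solely of the sentence ``The proof is similar to that of Lemma~\ref{lem-k-ast},'' and your argument is precisely that similarity spelled out, with the uniform boundedness of $k_{\zeta,\beta}$ on $\{|x|\ge 1\}$ playing the role that Remark~\ref{rem-k-conv} played for the convolution kernel. The extra care you take in justifying $(C_0+h)^\ast = C_0 + h^\ast$ and absorbing $C_0$ into the error term is a welcome clarification of a step the paper leaves implicit.
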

\begin{proof}
The proof is similar to that of Lemma \ref{lem-k-ast}.
\end{proof}


\begin{lem} \label{lem-est-k-ast-inft}
Let $\zeta>0$, $0<\beta<n$ and $\zeta^\prime\in(0, \zeta)$. There holds
\bes
[k_{\zeta, \beta}]^\ast(t)\lesssim  t^{-1/2-\zeta'/2|\rho|} ~(\log t)^{2|\rho|l/(\zeta'+|\rho|)}, \:\: t\geq 2.
\ees
\end{lem}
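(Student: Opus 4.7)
My plan is to reduce the problem to the asymptotic behavior of $k_{\zeta,\beta}$ near infinity and then carry out a polar-coordinate analysis of the distribution function. For $t \geq 2$, the bound on $[k_{\zeta,\beta}]^*(t)$ is governed by the region $|x| \geq 1$ (the piece from $|x|<1$ only affects the rearrangement at $t$ bounded above by $|{\bf B}(o,1)|$). Starting from the estimate in \eqref{beta-est},
$$k_{\zeta,\beta}(x) \lesssim |x|^{(\beta-l-1)/2-|\Sigma_0^+|}\,\phi_0(x)\,e^{-\zeta|x|}, \qquad |x|\geq 1,$$
and given $\zeta' \in (0,\zeta)$, I would pick $\zeta_1$ with $\zeta' < \zeta_1 < \zeta$. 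The polynomial $|x|^{(\beta-l-1)/2-|\Sigma_0^+|}$ is then absorbed into $e^{-(\zeta-\zeta_1)|x|}$ on $|x|\geq 1$, giving the cleaner majorant
$$k_{\zeta,\beta}(x) \lesssim F(x) := \phi_0(x)\, e^{-\zeta_1|x|}, \qquad |x|\geq 1.$$
Hence it suffices to control $F^*(t)$ for $t\geq 2$.

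The core step is computing $\lambda_F(s)$ for small $s>0$. Using \eqref{est-phi0}, that is $\phi_0(\exp Y)\asymp \prod_{\alpha\in\Sigma_0^+}(1+\alpha(Y))\,e^{-\rho(Y)}$, and passing to polar coordinates $Y=r\omega$ with $\omega\in \overline{\mathfrak a_+}\cap S^{l-1}$, the superlevel set $\{F>s\}$ lies within the region
$$r(\rho(\omega)+\zeta_1) < L',\qquad L':=\log(1/s)+|\Sigma_0^+|\log(1+r)+O(1).$$
Combining \eqref{int-formula} with the bound $J(\exp Y)\lesssim e^{2\rho(Y)}$ coming from \eqref{J-est},
$$\lambda_F(s)\lesssim \int_{\overline{\mathfrak a_+}\cap S^{l-1}}\int_0^{R(\omega)} r^{l-1}\, e^{2r\rho(\omega)}\,dr\,d\omega,\qquad R(\omega) = \frac{L'}{\rho(\omega)+\zeta_1}.$$
For $\rho(\omega)$ away from zero, the radial integral is $\sim R(\omega)^{l-1} e^{2R(\omega)\rho(\omega)}/\rho(\omega)$, and the exponent $2L'\rho(\omega)/(\rho(\omega)+\zeta_1)$ attains its maximum $2L'|\rho|/(|\rho|+\zeta_1)$ at $\omega_0=\rho/|\rho|$. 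Since $\rho(\omega)=|\rho|-(|\rho|/2)|v|^2+O(|v|^4)$ on $S^{l-1}$ near $\omega_0$ (with $v$ a tangent vector at $\omega_0$), Laplace's method on the $(l-1)$-dimensional sphere yields an angular width factor of order $(L')^{-(l-1)/2}$, producing
$$\lambda_F(s)\lesssim (\log(1/s))^{\gamma}\,s^{-2|\rho|/(|\rho|+\zeta_1)}$$
with an explicit exponent $\gamma$ built from the angular Laplace width together with the $|\Sigma_0^+|$-dependent correction $L'-\log(1/s)$.

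Inverting the relation $t=\lambda_F(s)$, one has $\log(1/s)\sim ((|\rho|+\zeta_1)/(2|\rho|))\log t$ for $t$ large, giving
$$F^*(t) \lesssim t^{-(|\rho|+\zeta_1)/(2|\rho|)}(\log t)^{\gamma(|\rho|+\zeta_1)/(2|\rho|)} = t^{-1/2-\zeta_1/(2|\rho|)}(\log t)^{\gamma(|\rho|+\zeta_1)/(2|\rho|)}.$$
Choosing $\zeta_1$ close to $\zeta'$ from above and using $t^{-(|\rho|+\zeta_1)/(2|\rho|)}\leq t^{-1/2-\zeta'/(2|\rho|)}$ for $t\geq 2$ delivers the claimed power of $t$. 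The main technical obstacle is the exact bookkeeping of the logarithmic factor: producing the advertised exponent $2|\rho|l/(\zeta'+|\rho|)$ requires carefully combining the $(l-1)/2$ contribution from angular Laplace, the polynomial factor from $\phi_0$, and the magnification $(|\rho|+\zeta')/(2|\rho|)$ coming from the inversion, while keeping $\zeta_1$ close enough to $\zeta'$. Refining the crude bound $J(\exp Y)\lesssim e^{2\rho(Y)}$ by the sharper two-sided estimate in \eqref{J-est} (which accounts for the walls of the Weyl chamber) may be needed to make the log power match precisely.
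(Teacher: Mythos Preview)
Your reduction to the majorant $F(x)=\phi_0(x)e^{-\zeta_1|x|}$ for $|x|\ge 1$ is correct, and the main exponent $t^{-1/2-\zeta'/(2|\rho|)}$ falls out as you describe. But your spherical Laplace argument is substantially more delicate than what is needed, and you yourself flag that the logarithmic bookkeeping is unfinished.

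The paper bypasses Laplace's method entirely by a simple change of coordinates on $\mathfrak a$. It first replaces $\phi_0(x)$ by the cruder bound $e^{-\rho(\log x)}$ (absorbing the polynomial prefactor of \eqref{est-phi0} directly into the gap $\zeta-\zeta'$, so no intermediate $\zeta_1$ is needed) and works with $f(\exp Y)=e^{-\rho(Y)-\zeta'|Y|}$. Then, instead of polar coordinates, it takes an orthonormal basis $\{\epsilon_1,\dots,\epsilon_{l-1},\rho/|\rho|\}$ of $\mathfrak a^*$ so that $\rho(Y)=|\rho|Y_l$. On $\overline{\mathfrak a_+}$ one has $\rho(Y)\ge 0$ and $|Y|\ge Y_l$, hence the superlevel set $\{e^{\rho(Y)+\zeta'|Y|}<1/s\}$ is contained in the box
\[
\Bigl\{(|\rho|+\zeta')Y_l<\log\tfrac{1}{s}\Bigr\}\ \cap\ \Bigl\{|Y|<\tfrac{1}{\zeta'}\log\tfrac{1}{s}\Bigr\}.
\]
Integrating $e^{2|\rho|Y_l}$ over this box (using $J(\exp Y)\lesssim e^{2\rho(Y)}$) gives directly
\[
\lambda_f(s)\ \lesssim\ \Bigl(\tfrac{1}{s}\Bigr)^{2|\rho|/(|\rho|+\zeta')}\Bigl(\log\tfrac{1}{s}\Bigr)^{l},
\]
and an elementary inversion (the paper's Lemma \ref{lem-fol}) yields $f^\ast(t)\lesssim t^{-(|\rho|+\zeta')/(2|\rho|)}(\log t)^{l(|\rho|+\zeta')/(2|\rho|)}$. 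No curvature analysis of $\rho(\omega)/(\rho(\omega)+\zeta_1)$ on $S^{l-1}$, no wall behaviour of $J$, and no tracking of the $|\Sigma_0^+|$ correction are required.

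Two further remarks. First, the log exponent that the paper's argument actually produces is $l(\zeta'+|\rho|)/(2|\rho|)$, not the $2|\rho|l/(\zeta'+|\rho|)$ printed in the lemma statement; this appears to be a typo, and since the only downstream use is to check $\int_{|E|}^\infty [k_{\zeta,\alpha}^\ast(t)]^{p'}\,dt<\infty$, any polynomial power of $\log t$ suffices. Second, your Laplace approach would ultimately succeed (since $\rho(\omega)>0$ on $\overline{\mathfrak a_+}\cap S^{l-1}$ the integrand is well behaved), but the linear coordinate trick is what makes the argument a few lines rather than a page.
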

\begin{proof}
Let us choose $r>0$ such that $|{\bf B}(o, r)|< 1$ and set
\beas
h(x)&=&\frac{1}{|x|^{n-\beta}}, \:\: 0<|x|<r;\\
&=& 0, \:\:\: \hspace{1cm} |x|\geq 1,
\eeas
and $f(x)= e^{-\rho(\log  x)}~ e^{-\zeta^\prime |x|}$, for $x\in X$.
Using the estimate (\ref{est-phi0}) of $\phi_0$, it follows form (\ref{beta-est}) that there exists $C>0$ such that
\be \label{eqn-k-beta-ast}
k_{\zeta, \beta}(x) \leq C \left(h(x)+f(x)\right), \:\:\:\: x\in X.
\ee
We now observe that for $s>0$
\bes
|\{x\in X: h(x)> s\}|\leq |{\bf B}(o, r).
\ees
This inequality yields 
\beas
|\{x\in X: h(x)+ f(x)> s\}| &\leq& |\{x\in X: h(x)> s/2\}| +|\{x\in X: f(x)> s/2\}|\\
&\leq& |{\bf B}(o, r)|+ |\{x\in X:f(x)> s/2\}|.
\eeas
Therefore, for $t> |{\bf B}(o, r)|$ we have
\bea \label{eqn-conv1}
\left(h+f\right)^\ast(t)
&=& \inf_{s}\{s>0: |\{x\in X: h(x)+ f(x)>s\}|\leq t\} \nonumber\\
&\leq & \inf_{s}\{s>0: |\{x\in X:f(x)>s/2\}|\leq t-|{\bf B}(o, r)|\}\nonumber\\
&=& 2f^\ast\left(t-|{\bf B}(o, r)|\right).
\eea
Integral formula (\ref{int-formula}) yields
\bea \label{est-t}
\lambda_{f}\left(f^\ast(t)\right)&=& \int_{\{Y\in\overline{\mathfrak a_+}: f(Y)> f^\ast(t)\}} c \prod_{\alpha\in \Sigma_+}\left(\sinh\alpha(Y)\right)^{m_{\alpha}}~dY\nonumber\\
&\lesssim& \int_{\{Y\in\overline{\mathfrak a_+}:~ e^{\rho(Y)+\zeta^\prime |Y|}< \frac{1}{f^\ast(t)}\}} ~e^{2\rho(Y)}~dY.
\eea
Let us fix a basis of $\mathfrak a^\ast$ as  $\{\epsilon_1, \cdots, \epsilon_{l-1}, \rho/ |\rho|\}$, where $\rho^{\perp}= span\{\epsilon_1, \cdots, \epsilon_{l-1}\}$. If $Y\in \mathfrak a$, we write $Y=(Y_1, \cdots, Y_l)$ with its coordinates with respect to the corresponding dual basis. We observe that $\rho(Y)=|\rho| Y_l$. Since $\rho(Y)\geq 0$ for $Y\in \overline{\mathfrak a_+}$, it follows that
\beas
\left\{Y\in\overline{\mathfrak a_+}:~ e^{\rho(Y)+\zeta^\prime|Y|}< \frac{1}{f^\ast(t)} \right\} &\subset& \left\{Y\in\overline{\mathfrak a_+}: e^{\rho(Y)+\zeta^\prime Y_l}< \frac{1}{f^\ast(t)}, ~ e^{\zeta^\prime |Y|}< \frac{1}{f^\ast(t)} \right\}\\
&\subset& \left\{Y\in\overline{\mathfrak a_+}: e^{|\rho|Y_l + \zeta^\prime Y_l}< \frac{1}{f^\ast(t)}, ~ |Y|< \frac{1}{\zeta^\prime}\log\left(\frac{1}{f^\ast(t)}\right)\right\}.
\eeas
Therefore, by (\ref{est-t}) we get that
\beas
\lambda_{f}\left(f^\ast(t)\right) &\lesssim& \int_{\{Y\in\overline{\mathfrak a_+}: e^{|\rho|Y_l+\zeta^\prime Y_l}< \frac{1}{f^\ast(t)}, ~ |Y|< \frac{1}{\zeta_1}\log\left(\frac{1}{f^\ast(t)}\right)\}}~e^{2|\rho|Y_l}~dY\\
&\lesssim& \left(\frac{1}{f^\ast(t)}\right)^{2|\rho|/\left(\zeta^\prime +|\rho|\right)} \left(\log \left(\frac{1}{f^\ast(t)}\right)\right)^l.
\eeas
Since $f^\ast(t)$ is non-zero it follows from the above estimate that $\la_f(f^\ast(t-c))< \infty$ for $c\in (0, f^\ast(t))$. Therefore, 
\bes
t= \la_f(f^\ast(t))\lesssim \left(\frac{1}{f^\ast(t)}\right)^{2|\rho|/\left(\zeta^\prime +|\rho|\right)} \left(\log\left(\frac{1}{f^\ast(t)}\right)\right)^l.
\ees
Using the lemma below (Lemma \ref{lem-fol}) we get that for all $t\geq 1$
\bes
f^\ast(t)\lesssim t^{-(\zeta'+|\rho|)/2|\rho|}~(\log t)^{l(\zeta'+|\rho|)/2|\rho|}. 
\ees
This fact together with (\ref{eqn-k-beta-ast}) and (\ref{eqn-conv1}) completes the proof. 
\end{proof}
\begin{lem} \label{lem-fol}
Let $h$ be a non-increasing function on the positive real axis. Suppose there exist two positive numbers $a, b$ such that for $t\geq 1$
\bes
t\leq \left(\frac{1}{h(t)}\right)^a~\log\left(\frac{1}{h(t)}\right)^b.
\ees
Then there holds
\bes
h(t)\leq t^{-1/a}~(\log t)^{b/a}, \:\: \textit{ for } t\geq 1. 
\ees
\end{lem}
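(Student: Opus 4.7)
The plan is to write $u(t):=1/h(t)$ so that the hypothesis reads $t\le u(t)^{a}(\log u(t))^{b}$ for $t\ge 1$, and the target becomes the lower bound $u(t)\gtrsim t^{1/a}(\log t)^{-b/a}$. (The inequality in the statement must be read up to an implicit multiplicative constant, consistent with its use in Lemma~\ref{lem-est-k-ast-inft}; the literal bound fails at $t=1$, where $(\log t)^{b/a}=0$ while $h(1)>0$.) Conceptually, the function $\phi(y):=y^{a}(\log y)^{b}$ is eventually strictly increasing, the hypothesis reads $\phi(u(t))\ge t$, and one expects from a fixed-point iteration on $\phi(y)=t$ that $\phi^{-1}(t)\sim t^{1/a}(\log t)^{-b/a}$. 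The cleanest route to make this rigorous is by contradiction.

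Fix any constant $c$ with $0<c<a^{b/a}$ and suppose, toward a contradiction, that there is some sufficiently large $t$ for which $u(t)<c\,t^{1/a}(\log t)^{-b/a}$. Taking logarithms and then raising to the $b$-th power,
\[
\log u(t)<\log c+\tfrac{1}{a}\log t-\tfrac{b}{a}\log\log t<\tfrac{1}{a}\log t,
\qquad
\bigl(\log u(t)\bigr)^{b}<a^{-b}(\log t)^{b},
\]
for $t$ large. Substituting into the hypothesis yields
\[
t\;\le\;u(t)^{a}(\log u(t))^{b}\;<\;c^{a}\,t\,(\log t)^{-b}\cdot a^{-b}(\log t)^{b}\;=\;\bigl(c/a^{b/a}\bigr)^{a}\,t\;<\;t,
\]
a contradiction. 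Hence $u(t)\ge c\,t^{1/a}(\log t)^{-b/a}$ for all sufficiently large $t$, and the bounded remaining range of $t$ is absorbed into the implicit constant, which gives the desired inequality for $h=1/u$.

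The only subtle point (and essentially the only obstacle) is controlling the size of $\log u$: taking logs of the hypothesis directly gives only the lower bound $\log u\ge \log t/(a+\varepsilon)$, whereas what is needed to turn $u\ge t^{1/a}(\log u)^{-b/a}$ into the desired conclusion is the complementary upper bound $\log u\lesssim \log t$. The contradiction argument above extracts exactly this in one step, by observing that the hypothetical failure $u<ct^{1/a}(\log t)^{-b/a}$ itself supplies $\log u<(1/a)\log t$, which then collides with the hypothesis $t\le u^{a}(\log u)^{b}$. The remainder is routine algebra.
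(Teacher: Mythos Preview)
Your proof is correct, and you rightly flag that the literal statement fails at $t=1$; both the paper and you are implicitly proving the asymptotic bound for $t$ bounded away from $1$ (which is all that the application in Lemma~\ref{lem-est-k-ast-inft} requires, since there $t\ge 2$).

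Your route differs from the paper's. The paper argues via the inverse: writing $s=h(t)$, the hypothesis reads $h^{-1}(s)\lesssim s^{-a}(\log(1/s))^{b}$; plugging in the candidate value $s=((\log r)^{b}/r)^{1/a}$ and simplifying gives $h^{-1}(((\log r)^{b}/r)^{1/a})\lesssim r$, and then monotonicity of $h$ converts this back to the desired bound on $h(r)$. Your contradiction argument instead stays on the direct side: assuming $u(t)<c\,t^{1/a}(\log t)^{-b/a}$ for a single large $t$, you extract $\log u(t)<(1/a)\log t$ and feed it back into the hypothesis to force $c\ge a^{b/a}$. This is slightly cleaner in two respects: it never invokes a possibly ill-defined inverse $h^{-1}$, and it does not actually use the monotonicity of $h$ at all (the paper's argument needs it for the final inversion step). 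The paper's substitution approach, on the other hand, makes the source of the constant more transparent and is closer in spirit to a direct computation of $\phi^{-1}(t)$ for $\phi(y)=y^{a}(\log y)^{b}$. Both arguments are elementary and of comparable length.
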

\begin{proof}
Let $h(t)=s$. By the hypothesis we have for all $t\geq 1$
\bes
h^{-1}(s)=t\lesssim \frac{1}{s^a} \left(\log \frac{1}{s}\right)^b. 
\ees 
Therefore, for $s\geq 1$
\beas
h^{-1}\left(\left(\frac{(\log s)^b}{s}\right)^{1/a}\right) &\lesssim& \frac{s}{(\log s)^b}~\left(\log\left(\frac{s^{1/a}}{(\log s)^{b/a}}\right)\right)^b\\
&=& s \left(\frac{\log s^{1/a}-\log\left(\log s\right)^{b/a}}{\log s}\right)^b
\eeas
Since the term inside the bracket on the right-hand side goes to a finite positive limit as $s$ goes to infinity, we get that
\bes
h^{-1}\left(\left(\frac{(\log s)^b}{s}\right)^{1/a}\right)\lesssim s, \:\: \textit{ for } s\geq 1.
\ees
Since $h$ is non-increasing, it follows that
\bes
h(s)\lesssim \frac{1}{s^{1/a}}\left(\log s\right)^{b/a}.
\ees
\end{proof}

\begin{lem} \label{lem-phiintegration}
Let $0< \alpha< 3/2, \zeta>0$ and $0< \beta< n-\alpha$. Then for each $a>0$, we have $\int_{a}^\infty \left([k_{\alpha}\ast k_{\zeta, \beta}]^\ast(t)\right)^2~dt< \infty$.
\end{lem}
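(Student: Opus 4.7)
The plan is to reduce the claim to an ordinary $L^2$-estimate on $k_\alpha\ast k_{\zeta,\beta}$ outside a neighborhood of the origin, then exploit the asymptotic split of Proposition \ref{est-infty-conv} together with the $L^2$-boundedness of the resolvent $T:=(-\Delta-|\rho|^2+\zeta^2)^{-\beta/2}$.

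Put $M:=[k_{\alpha}\ast k_{\zeta,\beta}]^{\ast}(a)$. Remark \ref{rem-k-conv} and Proposition \ref{est-0-conv} show that $k_\alpha\ast k_{\zeta,\beta}$ is bounded away from the origin and only has a local singularity at $o$, so $M<\infty$. A layer-cake computation then gives
\bes
\int_a^\infty \bigl([k_\alpha\ast k_{\zeta,\beta}]^\ast(t)\bigr)^2\,dt = \int_{\{k_\alpha\ast k_{\zeta,\beta}\leq M\}} |k_\alpha\ast k_{\zeta,\beta}(x)|^2\,dx \leq M^2|{\bf B}(o,1)| + \int_{\{|x|\geq 1\}} |k_\alpha\ast k_{\zeta,\beta}(x)|^2\,dx,
\ees
so the whole lemma reduces to proving the last integral is finite.

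On $\{|x|\geq 1\}$, Proposition \ref{est-infty-conv} bounds the kernel by $e^{-\zeta'|x|}\phi_0(x) + (F\ast k_{\zeta,\beta})(x)$, where $F:=\chi_{1/2}|\cdot|^{\alpha-l-2|\Sigma_0^+|}\phi_0$. For the first summand, the cancellation $\phi_0(\exp Y)^2\, J(\exp Y) \asymp |Y|^{2|\Sigma_0^+|}$ for large $Y\in\overline{\mathfrak{a}_+}$, which is immediate from (\ref{est-phi0}) and (\ref{J-est}), yields $e^{-\zeta'|\cdot|}\phi_0\in L^2(X)$ at once by integration in polar coordinates.

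The main obstacle is the second summand: $F$ is not in $L^1(X)$ (its radial profile grows exponentially against the Haar measure), so a Young-type argument is unavailable. I will bypass this by noting that $F$ is $K$-biinvariant, since $\phi_0$ is; hence $F\ast k_{\zeta,\beta} = TF$. The spherical symbol $(|\lambda|^2+\zeta^2)^{-\beta/2}$ of $T$ is bounded by $\zeta^{-\beta}$ on $\mathfrak{a}^{\ast}$, so Plancherel (\ref{plancherel}) gives $\|TF\|_2 \leq \zeta^{-\beta}\|F\|_2$, and it remains only to check that $F\in L^2(X)$. Using the same cancellation in polar coordinates, this reduces to the convergence of $\int_{1/2}^\infty r^{2\alpha-l-2|\Sigma_0^+|-1}\,dr$, i.e.\ to the condition $2\alpha < l+2|\Sigma_0^+|$. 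Since every noncompact symmetric space under consideration satisfies $l\geq 1$ and $|\Sigma_0^+|\geq 1$, the pseudo-dimension $\nu = l+2|\Sigma_0^+|$ is always at least $3$, and the hypothesis $\alpha<3/2$ is exactly the uniform threshold guaranteeing this inequality across all ranks and types. Combining everything proves the lemma.
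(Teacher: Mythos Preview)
Your argument is correct and follows essentially the same route as the paper: both proofs reduce the rearrangement integral to the global $L^2$-norm of $e^{-\zeta'|\cdot|}\phi_0+F\ast k_{\zeta,\beta}$ (with $F=\chi_{1/2}|\cdot|^{\alpha-l-2|\Sigma_0^+|}\phi_0$), handle the first summand by the $\phi_0$-estimate in polar coordinates, and handle the second via Plancherel using that the multiplier $(|\lambda|^2+\zeta^2)^{-\beta/2}$ is bounded, which reduces everything to $F\in L^2(X)$ and hence to $2\alpha<l+2|\Sigma_0^+|$. The only difference is in the bookkeeping of the reduction step: the paper splits off the local singularity as a separate function $f_1$ and uses the shift identity $(f_1+g)^\ast(t)\le 2g^\ast(t-c_0)$ from the proof of Lemma~\ref{lem-est-k-ast-inft}, whereas you truncate at level $M=[k_\alpha\ast k_{\zeta,\beta}]^\ast(a)$ and bound directly. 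One small imprecision: your ``layer-cake'' identity is in general only the inequality $\int_a^\infty(g^\ast)^2\,dt\le\int_{\{g\le M\}}g^2\,dx$ (equality can fail if $\lambda_g(M)<a$), but that is the direction you need, so the argument goes through unchanged.
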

\begin{proof}
By Lemma \ref{lem-k-ast}, it is enough to show that there exists $c_0>0$ such that the integral $\int_{c_0}^\infty \left([k_{\alpha}\ast k_{\zeta, \beta}]^\ast(t)\right)^2~dt< \infty$. 
We set
\beas
f_1(x)&=&\frac{1}{|x|^{n-\alpha-\beta}}, \:\: |x|<1;\\
&=& 0, \:\:\: \hspace{1cm} |x|\geq 1,
\eeas
and $f_2(x)=\left(\chi_{1/2} |\cdot|^{\alpha-l-2|\Sigma_0^+|}~\phi_0(\cdot)\right)\ast k_{\zeta, \beta} (x)$. Let $\zeta^\prime\in (0, \zeta)$. By (\ref{conv-est-infty}) there exists $C>0$ such that
\be \label{eqn-conv}
k_{\alpha}\ast k_{\zeta, \beta}(x)\leq C\left(f_1(x)+e^{-\zeta^\prime |x|}~\phi_0(x)+ f_2(x)\right), \:\: x\in X.
\ee
Proceeding as before we get by (\ref{eqn-conv1}) that for $t> |{\bf B}(o, 1)|$ 
\bes 
\left(f_1+e^{-\zeta^\prime |\cdot|}~\phi_0+ f_2\right)^\ast(t) \nonumber
= 2\left(e^{-\zeta^\prime |\cdot|}~\phi_0+ f_2\right)^\ast(t-c_0),
\ees
where $c_0=|{\bf B}(o, 1)|$. By (\ref{eqn-conv}) and equation above it follows that
\beas
\int_{c_0}^\infty \left[(k_{\alpha}\ast k_{\zeta, \beta})^\ast(t)\right]^2~dt &\leq& 4C^2 \int_{c_0}^\infty \left[\left( e^{-\zeta'|\cdot|}~\phi_0 + f_2\right)^\ast(t-c_0)\right]^2~dt\\
&=& 4C^2 \int_X \left|e^{-\zeta'|x|}~\phi_0(x) + f_2(x)\right|^2~dx\\
&\leq& 4C^2 \left\{\left(\int_X e^{-2\zeta'|x|}~\phi_0(x)^2~dx\right)^{1/2} + \left(\int_X \left(f_2(x)\right)^2~dx\right)^{1/2}\right\}^{2}.
\eeas
The integral formula (\ref{int-formula}) and the estimate (\ref{est-phi0}) yield
\bes
\int_{X} e^{-2\zeta'|x|}~\phi_0(x)^2~dx<\infty.
\ees
On the other hand, by Plancherel formula (\ref{plancherel}) for $K$-biinvariant functions we have for $0< \alpha< 3/2$ that
\beas
\int_{X} \left(f_2(x) \right)^2 ~dx&=& c_G \int_{\mathfrak a^\ast} |\widehat{(k_{\zeta, \beta}})(\lambda)|^2~ \big|\left(\chi_{1/2}|\cdot|^{\alpha-l-2|\Sigma_0^+|}~\phi_0\right)^{\widehat{•}}(\lambda)\big|^2~ |{\bf c}(\lambda)|^{-2}~d\lambda\\
&=& c_G \int_{\mathfrak a^\ast} (|\lambda|^2 +\zeta^2)^{-\beta} ~ \big|\left(\chi_{1/2}|\cdot|^{\alpha-l-2|\Sigma_0^+|}~\phi_0\right)^{\widehat{•}}(\lambda)\big|^2~ |{\bf c}(\lambda)|^{-2}~d\lambda\\
&\leq& C \int_{\mathfrak a^\ast}\big|\left(\chi_{1/2}|\cdot|^{\alpha-l-2|\Sigma_0^+|}~\phi_0\right)^{\widehat{•}}(\lambda)\big|^2~ |{\bf c}(\lambda)|^{-2}~d\lambda\\
&=& C \int_{\{x\in X:|x|\geq 1/2\}} |x|^{2\alpha-2l-4|\Sigma_0^+|} ~\left(\phi_0(x)\right)^2~ dx < \infty.
\eeas
This completes the proof.
\end{proof}

\section{Proof of the theorems}

\begin{proof}[Proof of Theorem \ref{thm0}]
Let $u\in W^{\alpha, p}(X)$ and we write $f= (-\Delta-|\rho|^2+\zeta^2)^{\alpha/2}u$. Then clearly $u= f\ast k_{\zeta, \alpha}$ and by the hypothesis $\|f\|_p\leq 1$. Applying O'Neil's lemma \cite[Lemma 1.5]{On} for the rearrangement of convolution, we have for $t>0$
\bes
u^\ast(t)\leq \frac{1}{t} \int_{0}^t f^\ast(s)~ds \int_{0}^t k_{\zeta, \alpha}^\ast(s)~ds+ \int_{t}^\infty f^\ast(s) k_{\zeta, \alpha}^\ast(s)~ds.
\ees
Therefore,
\bea \label{est1}
&& \frac{1}{|E|} \int_E \exp\left(\beta_0(n, \alpha)|u(x)|^{p^\prime}\right)~dx \leq \frac{1}{|E|} \int_{0}^{|E|} \exp\left(\beta_0(n, \alpha)|u^\ast(t)|^{p\prime}\right)~dt \nonumber\\
&\leq& \frac{1}{|E|} \int_{0}^{|E|} \exp\left(\beta_0(n, \alpha)~ \vline~\frac{1}{t}\int_{0}^t f^\ast(s)~ds \int_{0}^t k_{\zeta, \alpha}^\ast(s)~ds+ \int_{t}^\infty f^\ast(s)k_{\zeta, \alpha}^\ast(s)~ds~\vline\:^{p^\prime}\right)~dt \nonumber\\
&\leq& \int_{0}^\infty \exp \bigg(-t+\beta_0(n, \alpha)~\vline~\frac{1}{|E|e^{-t}} \int_{0}^{|E|e^{-t}} f^\ast(s)~ds~\int_{0}^{|E|e^{-t}} k_{\zeta, \alpha}^\ast(s)~ds \nonumber \\
&& \hspace{4cm} + \int_{|E|e^{-t}}^{\infty} f^\ast(s)~k_{\zeta, \alpha}^\ast(s)~ds~ \vline\:^{p^\prime} \bigg)~dt.
\eea
To get the last equation, we use the substitution $t \mapsto |E| e^{-t}$. Next, we change the variables
\bea
&&\phi(t)= (|E|e^{-t})^{1/p} f^\ast(|E|e^{-t}); \label{defn-phi} \\
&& \psi(t)= \beta_0(n, \alpha)^{1/p^\prime} (|E|e^{-t})^{1/p^\prime} ~k_{\zeta, \alpha}^\ast(|E|e^{-t}) \label{defn-psi}.
\eea
It is now easy to check that
\beas
&& \int_{0}^{|E|e^{-t}} f^\ast(s)~ds~\int_{0}^{|E|e^{-t}} k_{\zeta, \alpha}^\ast(s)~ds= \frac{|E|}{\beta_0(n, \alpha)^{1/p^\prime}}~\int_{t}^{\infty} e^{-s/p^\prime}~\phi(s)~ds~\int_{t}^{\infty} e^{-s/p}~\psi(s)~ds;\\
&&  \int_{|E|e^{-t}}^{\infty} f^\ast(s)~k_{\zeta, \alpha}^\ast(s)~ds= \frac{1}{\beta_0(n, \alpha)^{1/p^\prime}}~\int_{-\infty}^t~\phi(s)~\psi(s)~ds.
\eeas
Putting the above quantities in (\ref{est1}) we get that
\bea \label{est2}
&& \frac{1}{|E|} \int_E \exp\left(\beta_0(n, \alpha)|u(x)|^{p^\prime}\right)~dx \nonumber\\
&\leq& \int_{0}^\infty \exp \bigg(-t+ ~\vline~e^t \int_{t}^{\infty} e^{-s/p^\prime}~\phi(s)~ds~\int_{t}^{\infty} e^{-s/p}~\psi(s)~ds  + \int_{-\infty}^t~\phi(s)~\psi(s)~ds~ \vline\:^{p^\prime} \bigg)~dt \nonumber\\
&& =\int_{0}^\infty e^{-F(t)}~dt,
\eea
where 
\bes
F(t)= t-\left( e^t \int_{t}^{\infty} e^{-s/p^\prime}~\phi(s)~ds~\int_{t}^{\infty} e^{-s/p}~\psi(s)~ds  + \int_{-\infty}^t~\phi(s)~\psi(s)~ds~ \right)^{p^\prime}.
\ees
We now set
\bea \label{defn-a}
a(s,t)&=& \psi(s), \:\:\:\: \hspace{4cm} s<t;\\
&=& e^t\left(\int_{t}^\infty e^{-r/p}\psi(r)~dr\right)~e^{-s/p^\prime}, \:\:  s>t \nonumber. 
\eea
Then, by (\ref{est2}) we have
\bes
\frac{1}{|E|} \int_E \exp\left(\beta_0(n, \alpha)|u(x)|^{p^\prime}\right)~dx\leq \int_{0}^\infty e^{-F(t)}~dt,
\ees
where
\be \label{defn-F}
F(t)=t-\left(\int_\R a(s,t)~\phi(s)~ds\right)^{p^\prime}.
\ee
Now, we prove that there exists $C$ independent of $u$ such that $\int_{0}^\infty e^{-F(t)}~dt\leq C$. The proof is inspired by similar ideas used by Adams \cite[Lemma 1]{Ad} and has been carried
out in details in \cite{LLY2}. For the sake of completeness, we sketch the proof. First, notice that
\bes
\int_0^\infty e^{-F(t)}~dt= \int_{\R}|E_\lambda|~e^{-\lambda}~d\lambda,
\ees
where $E_\lambda=\{t\geq 0: F(t)\leq \lambda\}$ and $|E_\lambda|$ is the Lebesgue measure of $E_\lambda$. It is enough to show the following two facts:
\begin{enumerate}
\item[(i)] There exists a constant $c \geq 0$ which is independent of $\phi$ such that $\inf_{t\geq 0} F(t) \geq -c$. 
\item[(ii)] There exist constants $B_1$ and $B_2$ which are both independent of $\phi$ and $\lambda$ such that $|E_\lambda|\leq B_1|\lambda|+ B_2$.
\end{enumerate}
We first prove (i). We set $L(t)= \left(\int_{t}^\infty \phi(s)^p~ds\right)^{1/p}$. By the definition (\ref{defn-phi}) of $\phi$, it follows that
\bes
\int_{-\infty}^{t} \phi(s)^p~ds= \int_{\R} \phi(s)^p~ds- L(t)^p= \|f\|_p^p - L(t)^p\leq 1-L(t)^p.
\ees 
By using the above estimate and H\"older's inequality, it follows from (\ref{defn-F}) that if $t\in E_\lambda$,
\bea \label{est3}
t-\lambda &\leq& \left[\int_\R a(s,t)\phi(s)~ds\right]^{p^\prime} \nonumber\\
&=& \left[\int_{-\infty}^t a(s,t)\phi(s)~ds + \int_{t}^\infty a(s,t)\phi(s)~ds\right]^{p^\prime} \nonumber\\
&\leq& \left[\left(\int_{-\infty}^t a(s,t)^{p^\prime}~ds\right)^{1/p^\prime} \left(1-L(t)^p\right)^{1/p} + \left(\int_t^{\infty} a(s,t)^{p^\prime}~ds\right)^{1/p^\prime} L(t) \right]^{p^\prime} \nonumber\\
&=& \bigg[\left(\int_{-\infty}^t \psi(s)^{p^\prime}~ds\right)^{1/p^\prime} \left(1-L(t)^p\right)^{1/p} \nonumber \\
&& +~ e^{t} \left(\int_{t}^\infty e^{-r/p} \psi(r)~dr\right)~ \left(\int_{t}^\infty e^{-s}~ds\right)^{1/p^\prime} ~L(t)\bigg]^{p^\prime}.
\eea
By the definition (\ref{defn-psi}) of $\psi$, the estimate given in Lemma \ref{lem-beta-ast} and the fact that $p=n/ \alpha$ we have
\be \label{est-psi-zero}
\psi(t)= \beta_0(n, \alpha)^{1/p^\prime} (|E|e^{-t})^{1/p^\prime} ~k_{\zeta, \alpha}^\ast(|E|e^{-t})\leq 1+ \mathcal O\left(e^{-\frac{\tilde\epsilon t}{n}}\right), \:\: \textit{ for all } t>0.
\ee
Let $\zeta> 0$ if $1 < p < 2$ and $\zeta> 2|\rho|~(\frac{1}{2}-\frac{1}{p})$ if $p\geq 2$. We choose $\zeta^\prime\in (0, \zeta)$ with $\zeta-\zeta^\prime$ small enough such that $\zeta^\prime$ satisfies the same properties as $\zeta$, that is, $\zeta^\prime> 0$ if $1 < p < 2$ and $\zeta^\prime > 2|\rho|~(\frac{1}{2}-\frac{1}{p})$ if $p\geq 2$. Then by Lemma \ref{lem-est-k-ast-inft} we have
\bes
\int_{-\infty}^0 \psi(s)^{p^\prime}~ds= \beta_0(n, \alpha) \int_{|E|}^\infty \left(k_{\zeta, \alpha}^\ast(t)\right)^{p^\prime}~dt \lesssim \int_{|E|}^\infty \left(t^{-1/2-\zeta^\prime/2|\rho|}~ (\log t)^{2|\rho|l/(\zeta'+|\rho|)}\right)^{p^\prime}~dt<\infty.
\ees
Therefore, using (\ref{est-psi-zero}) and the above estimate we have
\be \label{est4}
\int_{-\infty}^t \psi(s)^{p^\prime}~ds= \int_{-\infty}^0 \psi(s)^{p^\prime}~ds + \int_{0}^t \psi(s)^{p^\prime}~ds\leq b_1+ \int_{0}^t \left(1+ \mathcal O(e^{-\tilde\epsilon s/n})\right)^{p^\prime}~ds\leq b_2+t, 
\ee
and
\bea \label{est5}
e^{t} \left(\int_{t}^\infty e^{-r/p} \psi(r)~dr\right)~\left(\int_{t}^\infty e^{-s}~ds\right)^{1/p^\prime} &\leq& e^t \int_{t}^\infty e^{-r/p} \left(1+\mathcal O(e^{-\tilde\epsilon r/n})\right)~dr~ e^{-t/p^\prime} \nonumber\\
&\leq & C \int_{t}^\infty e^{-(r-t)/p}~dr=b_3< \infty,
\eea
where the constants $b_1 , b_2$ and $b_3$ are independent of $\phi$. Using the estimates (\ref{est4}), (\ref{est5}) it follows from (\ref{est3}) that
\bes
t-\lambda \leq \left[\left(b_2+t\right)^{1/p^\prime}\left(1-L(t)^p\right)^{1/p} +b_3 L(t)\right]^{p^\prime}.
\ees
The rest of the proof of (a) is similar to that in \cite{Ad} (see the proof after eqn.(16) in \cite{Ad}). Since the proof of (b) is the same as that in \cite{LLY2} we will omit here.

The sharpness of the constant $\beta_0(n, \alpha)$ can be verified by the process similar to that in \cite{Ad, KSW, LLY2, RuS} and thus the proof of Theorem \ref{thm0} is completed.
\end{proof}

\begin{proof}[Proof of Theorem \ref{thm1}]
Let $u\in W^{\alpha, p}$ with $\int_X |(-\Delta-|\rho|^2+\zeta^2)^{\alpha/2} u(x)|^p~dx\leq 1$. By Corollary \ref{cor-sobolev}, we have
\bes
\int_{X}|u(x)|^p~dx\leq S_p\int_X |(-\Delta-|\rho|^2+\zeta^2)^{\alpha/2} u(x)|^p~dx\leq S_p,
\ees
provided $\zeta> 2|\rho||1/2-1/p|$. We now set $\Omega(u)=\{x\in X: |u(x)|\geq 1\}$. Then from the above inequality it follows that
\bes
|\Omega(u)|=\int_{\Omega(u)}~dx\leq \int_{X}|u(x)|^p~dx\leq S_p.
\ees
Therefore, we have $|\Omega(u)|\leq S_p$,  which is independent of $u$ satisfying $\|(-\Delta-|\rho|^2+\zeta^2)^{\alpha/2}u\|_p\leq 1$ provided $\zeta> 2|\rho||1/2-1/p|$. We now write
\bes
\int_X \Phi_p\left(\beta_0(n, \alpha)|u(x)|^{p^\prime}\right)~dx= \int_{\Omega(u)} \Phi_p\left(\beta_0(n, \alpha)|u(x)|^{p^\prime}\right)~dx + \int_{X\backslash \Omega(u)} \Phi_p\left(\beta_0(n, \alpha)|u(x)|^{p^\prime}\right)~dx.
\ees
We now notice that $j_p=p$ if $p$ is an integer and $j_p=[p]+1$ if $p$ is not an integer. Therefore, $(j_p-1)p^\prime\geq p$ for all $p>1$. We also notice that on the domain $X\backslash \Omega(u)$,  $|u(x)|< 1$. Thus
\bea \label{est-s}
\int_{X\backslash \Omega(u)} \Phi_p\left(\beta_0(n, \alpha)|u(x)|^{p^\prime}\right)~dx &\leq& \sum_{k=j_p-1}^\infty \frac{\beta_0(n, \alpha)^k}{k!} \int_{X\backslash \Omega(u)} |u(x)|^{p^\prime k}~dx \nonumber\\
&\leq& \sum_{k=j_p-1}^\infty \frac{\beta_0(n, \alpha)^k}{k!} \int_{X\backslash \Omega(u)} |u(x)|^p~dx \nonumber\\
&\leq& \sum_{k=j_p-1}^\infty \frac{\beta_0(n, \alpha)^k}{k!} \|u\|_p^p\leq C_1.
\eea
Since $\zeta> 2|\rho||1/p-1/2|$, by Theorem \ref{thm0} there exists $C_2>0$ independent of $u$ such that
\be \label{est-omega}
\int_{\Omega(u)} \Phi_p\left(\beta_0(n, \alpha)|u(x)|^{p^\prime}\right)~dx\leq \int_{\Omega(u)} \exp\left(\beta_0(n, \alpha)|u(x)|^{p^\prime}\right)~dx\leq C_2.
\ee
Combining equations (\ref{est-s}) and (\ref{est-omega}) it follows that
\bes
\int_{X} \Phi_p\left(\beta_0(n, \alpha)|u(x)|^{p^\prime}\right)~dx\leq C_1+C_2= C,
\ees
for all $u$ satisfying $\|(-\Delta-|\rho|^2+\zeta^2)^{\alpha/2}\|_p\leq 1$ provided $\zeta> 2|\rho||1/2-1/p|$.
The sharpness of the constant $\beta_0(n, \alpha)$ can be verified by the process similar to that in the proof of Theorem \ref{thm0}.
\end{proof}

\begin{lem} \label{lem-ps2}
Let $n\geq 3$, $\zeta>0$ and $0<2s< \min\{l+2|\Sigma_0^+|, n\}$. Then for $2< q\leq \frac{2n}{n-2s}$ there exists $C=C(n, s, q,\zeta)$ such that for all $u\in W^{n/2, 2}(X)$
\bes
\int_X |(-\Delta -|\rho|^2)^{s/2}(-\Delta-|\rho|^2 +\zeta^2)^{(n-2s)/4}u(x)|^2~dx\geq C\|u\|_q^2.
\ees
\end{lem}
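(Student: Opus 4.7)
The plan is to separate the two commuting fractional operators and handle each with a simpler tool: the Bessel factor $(-\Delta-|\rho|^2+\zeta^2)^{(n-2s)/4}$ is dealt with by Plancherel, and the Riesz factor $(-\Delta-|\rho|^2)^{s/2}$ is dealt with by the fractional Poincar\'e--Sobolev inequality of Theorem~\ref{thm-p-s}.

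First, given $u\in W^{n/2,2}(X)$, the two fractional operators commute since both arise from the functional calculus of the positive self-adjoint operator $-\Delta-|\rho|^2$ (whose spherical multiplier is $|\lambda|^2$); their composition therefore has multiplier $m(\lambda)=|\lambda|^s(|\lambda|^2+\zeta^2)^{(n-2s)/4}$. Applying the Plancherel formula (\ref{plancherel}) to the left-hand side of the claim gives
\begin{equation*}
\int_X \big|(-\Delta-|\rho|^2)^{s/2}(-\Delta-|\rho|^2+\zeta^2)^{(n-2s)/4}u(x)\big|^2\,dx
= c_G \int_{\mathfrak a^\ast\times K} |\lambda|^{2s}(|\lambda|^2+\zeta^2)^{(n-2s)/2}|\tilde u(\lambda,k)|^2 |{\bf c}(\lambda)|^{-2}\,d\lambda\,dk.
\end{equation*}
Using the crude pointwise bound $(|\lambda|^2+\zeta^2)^{(n-2s)/2}\geq \zeta^{n-2s}$ in the integrand and invoking Plancherel once more in reverse, this is at least $\zeta^{n-2s}\|(-\Delta-|\rho|^2)^{s/2}u\|_2^2$.

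Next, the assumptions $0<2s<\min\{l+2|\Sigma_0^+|,n\}$ and $2<q\leq 2n/(n-2s)$ match exactly the hypotheses of Theorem~\ref{thm-p-s} (with $\sigma=2s$ and $p=q$). The inclusion $W^{n/2,2}(X)\subset W^{s,2}(X)$ is immediate from the spectral definition of these spaces via the weight $(|\lambda|^2+|\rho|^2)^\alpha$, since $s<n/2$. Hence Theorem~\ref{thm-p-s} applied to $u$ gives $\|(-\Delta-|\rho|^2)^{s/2}u\|_2^2\geq S\|u\|_q^2$. Chaining the two estimates yields the lemma with $C=S\zeta^{n-2s}$, a constant that depends only on $n, s, q$ and $\zeta$.

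I do not anticipate any serious obstacle: the proof is a one-line composition of Plancherel's theorem with the already-established fractional Poincar\'e--Sobolev inequality. The only minor verifications are that $Tu\in L^2(X)$ for every $u\in W^{n/2,2}(X)$ (which follows from the elementary comparison $|\lambda|^{2s}(|\lambda|^2+\zeta^2)^{(n-2s)/2}\lesssim (|\lambda|^2+|\rho|^2)^{n/2}$) and the routine bookkeeping of constants.
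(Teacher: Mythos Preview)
Your proof is correct and follows essentially the same approach as the paper: apply Plancherel to express the left-hand side via the multiplier $|\lambda|^{2s}(|\lambda|^2+\zeta^2)^{(n-2s)/2}$, drop the Bessel factor using $(|\lambda|^2+\zeta^2)^{(n-2s)/2}\geq \zeta^{n-2s}$, and then invoke Theorem~\ref{thm-p-s} with $\sigma=2s$. The additional remarks you make about commutativity and the inclusion $W^{n/2,2}(X)\subset W^{s,2}(X)$ are fine bookkeeping that the paper leaves implicit.
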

\begin{proof}
By the Plancherel formula (\ref{plancherel}) and the Poincar\'e-Sobolev inequality (Theorem \ref{thm-p-s}), it follows that
\beas
&& \int_X|(-\Delta -|\rho|^2)^{s/2}(-\Delta-|\rho|^2+\zeta^2)^{(n-2s)/4}u(x)|^2~dx\\
&=& c_G \int_{\mathfrak a_+^\ast \times K} |\lambda|^{2s}~(|\lambda|^2 +\zeta^2)^{(n-2s)/2} ~|\widetilde u(\lambda, k)|^2~|{\bf c}(\lambda)|^{-2}~d\lambda~dk\\
&\geq& \zeta^{n-2s} c_G \int_{\mathfrak a_+^\ast \times K} |\lambda|^{2s} ~|\widetilde u(\lambda, k)|^2~|{\bf c}(\lambda)|^{-2}~d\lambda~dk\\
&=& \zeta^{n-2s} \int_X |(-\Delta-|\rho|^2)^{s/2}u(x)|^2~dx \geq C \|u\|_q^2 ,\:\:\:\: 2< q\leq\frac{2n}{n-2s}.
\eeas
This completes the proof.
\end{proof}

\begin{proof}[Proof of Theorem \ref{thm3}]
Let $u\in W^{n/2, 2}(X)$ satisfying (\ref{hypo-thm3}). We choose some $q_0$ satisfying $2<q_0\leq \min\{2n/(n-2s), 4\}$. Then by Lemma \ref{lem-ps2} we have
\bes
\|u\|_{q_0}^2\leq C_0 \int_X|(-\Delta-|\rho|^2)^{s/2}(-\Delta-|\rho|^2+\zeta^2)^{(n-2s)/4}u(x)|^2~dx\leq C.
\ees
We now set $\Omega(u)= \{x\in S:|u(x)|\geq 1\}$, then 
\bes
|\Omega(u)|= \int_{\Omega(u)} dx\leq \int_X |u(x)|^{q_0}~dx\leq C^{q_0/2},
\ees
where the constant $C^{q_0/2}$ is independent of $u$. Since $q_0\leq 4$, it follows that
\bea \label{eqn-thm11}
&& \int_{X\backslash \Omega(u)} \left[\exp\left(\beta_0(n, n/2)u(x)^2\right)-1- \beta_0(n, n/2)u(x)^2\right]~dx \nonumber\\
&=& \sum_{k=2}^{\infty} \frac{\beta_0(n, n/2)^k}{k!} \int_{X\backslash \Omega(u)} u(x)^{2k}~dx \nonumber\\
&\leq & \sum_{k=2}^\infty \frac{\beta_0(n, n/2)^k}{k!} \int_{X\backslash \Omega(u)} |u(x)|^{q_0}~dx< \infty.
\eea
Next, we show that $\int_{\Omega(u)} \exp\left(\beta_0(n, n/2)u(x)^2\right)~dx$ is bounded by some constant independent of $u$. We rewrite 
\bes
v= (-\Delta-|\rho|^2)^{s/2}(-\Delta-|\rho|^2+\zeta^2)^{(n-2s)/4}~u.
\ees
Then $\|v\|_2\leq 1$ and $u= v\ast (k_{s} \ast k_{\zeta, (n-2s)/2})$. By Lemma \ref{lem-k-ast}, the kernel $k_{s} \ast k_{\zeta, (n-2s)/2}$ satisfies
\bes
[k_{s} \ast k_{\zeta, (n-2s)/2}]^\ast(t) \leq \frac{1}{\gamma(n/2)} \cdot \left(\frac{nt}{\omega_{n-1}}\right)^{-1/2} + O(t^{-1/2+\epsilon'/n}), \:\: \txt{ for } 0<t<1, 
\ees
and by Lemma \ref{lem-phiintegration}, for each $a>0$
\bes
\int_{a}^\infty |[k_{s} \ast k_{\zeta, (n-2s)/s}]^\ast(t)|^2~dt < \infty.
\ees
Following the proof of Theorem \ref{thm0}, we can find a constant $C$ independent of $u$ such that 
\bes
\int_{\Omega(u)} \exp\left(\beta_0(n, n/2)u(x)^2\right)~dx= \int_{\Omega(u)} \exp \left(\beta_0(n, n/2)[v\ast (k_{s} \ast k_{s,(n-2s)/2})]^2\right)~dx \leq C.
\ees
Combining equation (\ref{eqn-thm11}) with the above inequality we complete the proof. 

The sharpness of the constant $\beta_0(n, \alpha)$ can be verified by the process similar to that in the proof of Theorem \ref{thm0}
\end{proof}

\begin{proof}[Proof of Theorem \ref{thm-HA}]
Let $u\in C_c^\infty(X)$ with 
\bes
\int_X |(-\Delta)^{s/2}(-\Delta-|\rho|^2+\zeta^2)^{(n-2s)/4} u(x)|^2~dx- |\rho|^{2s}\zeta^{n-2s}\int_{X}|u(x)|^2~dx\leq 1.
\ees
The Plancherel formula (\ref{plancherel}) yields 
\beas
&&\int_X |\left(-\Delta- |\rho|^2\right)^{s/2} \left(-\Delta-|\rho|^2+\zeta^2\right)^{(n-2s)/4} u(x)|^2~dx\\
&=& c_G \int_{\mathfrak a_+^\ast \times K} |\lambda|^{2s}(|\lambda|^2+\zeta^2)^{(n-2s)/2}~|\widetilde u(\lambda, k)|^2~|{\bf c}(\lambda)|^{-2}~d\lambda~dk\\
&\leq & c_G \int_{\mathfrak a_+^\ast\times K} \left[\left(|\lambda|^2+|\rho|^2\right)^s \left(|\lambda|^2+\zeta^2\right)^{(n-2s)/2}- \zeta^{n-2s}|\rho|^{2s}\right]~|\widetilde u(\lambda, k)|^2~|{\bf c}(\lambda)|^{-2}~d\lambda~dk\\
&=& \int_X |(-\Delta)^{s/2}(-\Delta-|\rho|^2+\zeta^2)^{(n-2s)/4}u(x)|^2~dx-\zeta^{n-2s} |\rho|^{2s}\int_{X} |u(x)|^2~dx\leq 1.
\eeas
Therefore, by Theorem \ref{thm3} we complete the proof.
\end{proof}

We conclude the paper with the following remark.
\begin{rem}\label{finalrem}
We recall that Damek-Ricci spaces are non-symmetric generalization of
rank one Riemannian symmetric spaces. Though symmetric spaces are the most important prototypes, they form a very small subclass of the set of all Damek-Ricci spaces (see \cite{ADY}). A Damek-Ricci space is a Riemannian manifold and a solvable Lie group but in general not a symmetric space, i.e. cannot be realized as a quotient space $G/K$, for
a semisimple Lie group $G$. It will be interesting to see whether Adams-type inequalities can be proved in the context of Damek-Ricci spaces. 
\end{rem}

\noindent{\bf Acknowledgement:} The author is supported by INSPIRE Faculty Award (IFA19-MA136) from Department of Science and Technology, India.

\end{document}